\newtheorem{theorem}{Theorem}[section]
\newtheorem{question}[theorem]{Question}
\newtheorem{corollary}[theorem]{Corollary}
\newtheorem{proposition}[theorem]{Proposition}
\newtheorem{lemma}[theorem]{Lemma}
\newtheorem{claim}[theorem]{Claim}
\newtheorem{definition}[theorem]{Definition}
\theoremstyle{remark}
\newtheorem{remark}[theorem]{Remark}
\numberwithin{equation}{section}
\DeclareMathOperator{\supp}{supp}
\DeclareMathOperator{\erg}{erg}
\DeclareMathOperator{\phc}{PHC}
\DeclareMathOperator{\mme}{MME}
\title[Characterization of Ergodic Measures of Maximal Entropy  ]{Characterization of Ergodic Measures of Maximal Entropy for Topologically Transitive Partially Hyperbolic Diffeomorphisms with compact center leaves}
\author{Jorge Crisostomo}
\address{Facultad de Ciencias Matem\'aticas, UNMSM, Lima, Per\'u}
\curraddr{}
\email{jcrisostomop@unmsm.edu.pe}
\thanks{This research was supported by Universidad Nacional Mayor de San Marcos RR 004305-R-24 and Project number B24142381-PCONFIGI 2024}
\author{Richard Cubas }
\address{Universidad Cient\'ifica del Sur, Lima, Per\'u}
\curraddr{}
\email{rcubas@cientifica.edu.pe}
\thanks{}
\date{}                                         
\begin{document}
\maketitle

\begin{abstract}
In this paper, we provide an upper bound on the number of maximal entropy ergodic measures with zero Lyapunov exponent for topologically
transitive partially hyperbolic diffeomorphisms with compact one-dimensional center leaves on $\mathbb{T}^3$. Furthermore, we establish a comprehensive characterization of the support of these measures.
\end{abstract}

%\tableofcontents
\section{Introduction}
Topological entropy and metric entropy are fundamental concepts for understanding the intrinsic complexity of a dynamical system. While topological entropy quantifies the complexity of the system by considering all orbits, metric entropy measures the complexity with respect to those orbits that are relevant for a given probability-invariant measure. For diffeomorphisms of a compact Riemannian manifold, the variational principle asserts that the supremum of the metric entropies, taken over all invariant probability measures, coincides with the topological entropy of the system.

A maximal entropy measure is an invariant measure whose metric entropy equals the topological entropy of the system. In this setting, the complexity of the orbits relevant to a maximal entropy measure coincides with the complexity of all orbits of the entire system. A longstanding and fundamental problem is to determine the existence and uniqueness (or finiteness) of ergodic maximal entropy measures, as well as to describe their supports. Sinai, Margulis, Ruelle, and Bowen pioneers in this field established results on the existence and uniqueness of maximal entropy measures for uniformly hyperbolic systems. Newhouse \cite{newhouse1989continuity} proved that any $C^\infty$ diffeomorphism of a compact manifold admits a maximal entropy measure. In the setting of partially hyperbolic dynamical systems, many authors have contributed significant advances to this topic; see \cite{hertz2012maximizing}, \cite{Vaughn2020}, \cite{buzzi2012maximal}, \cite{ures2021maximal}, \cite{ures2012intrinsic}  among others.

\begin{definition}
   A diffeomorphism $f: M \rightarrow M$ is partially hyperbolic, if the tangent bundle splits into three $Df$-invariant subbundles, $T M=E^s \oplus E^c \oplus E^u$, such that for all unit vectors $v^\sigma \in E_x^\sigma,$ $\sigma \in\{s, c, u\}$ with $x \in M$ satisfy
$$
\left\|\left.D f\right|_{E^s}\right\|<1,\left\|\left.D f^{-1}\right|_{E^u}\right\|<1, \text { and }\left\|D f_x\left(v^s\right)\right\|<\left\|Df_x\left(v^c\right)\right\|<\left\|D f_x\left(v^u\right)\right\|
$$
for some appropriate Riemannian metric.
% A partially hyperbolic diffeomorphism $f$ is dynamically coherent if for $i=c s, c u$, there is an invariant foliation $\mathcal{F}^i$ tangent to the bundle $E^i$, where $E^{c s}=E^c \oplus E^s$ and $E^{c u}=E^u \oplus E^c$.
\end{definition}
For partially hyperbolic diffeomorphisms, it is a well-known fact that there are
foliations  $\mathcal{F}^*$ tangent to the distributions $E^*$
for $* = s, u$.  In general it is not true that there is a foliation tangent to $E^c$
. It can fail to be
true even if $dim E^c=1$ for partially hyperbolic dynamics defined on $\mathbb{T}^3$. A partially hyperbolic diffeomorphism $f$ is \textit{dynamically coherent} if there exist  invariant foliations $\mathcal{F}^{c*}$ tangent to $E^{c*}=E^c \oplus E^*$ for $*= s, u$ (and then, there exist invariante foliations $\mathcal{F}^c$ tangent to $E^c$). The leaf of $\mathcal{F}^*$
containing $x$ will be called $\mathcal{F}^*(x)$, for $*= s, u, c, cs, cu$.

We shall say that a set $X$ is $*$-saturated if it is a union of leaves of the strong
foliations $\mathcal{F}^*$ for $*=s$ or $u$. We also say that $X$ is $su$-saturated if it is both $s$- and $u$-saturated. The \emph{accessibility class} of \( x \in M \), denoted \( AC(x) \) is the minimal $su$-saturated set containing $x$. Note that the accessibility classes form a partition of $M$. In case there is some $x\in M$ whose accessibility class is $M$, then the diffeomorphism
$f$ is said to have the accessibility property. This is equivalent to say that any two points  $x,y\in M$  there exists an \( s,u \)-path connecting \( x \) to \( y \); that is, a concatenation of finitely many path segments, each lying entirely within a leaf of either the foliation \( \mathcal{F}^s \) or the foliation \( \mathcal{F}^u \).

In the context of dynamically coherent partially hyperbolic diffeomorphisms defined on a 3-manifold whose center foliation forms a circle bundle, it was proved in \cite{hertz2012maximizing} that if the diffeomorphism is accessible, the number of ergodic measures of maximal entropy is finite. Moreover, the authors established the following dichotomy: either the system has a unique non-hyperbolic maximal entropy measure, or it has a finite number of ergodic measures of maximal entropy, all of which are hyperbolic. More precisely, they showed the following theorem:

\begin{theorem}\label{dichotomy} \cite{hertz2012maximizing}
Let $f : M \rightarrow M$ be a $C^{1+\alpha}$ partially hyperbolic diffeomorphism of a 3-dimensional closed manifold $M$. Assume that $f$ is dynamically coherent with one dimensional compact central leaves and has the accessibility property. Then, $f$ has finitely many ergodic measures of maximal entropy. There are two possibilities:
\begin{enumerate}
\item $f$ has a unique measure of maximal entropy  $\mu$. The central Lyapunov exponent $\lambda_c(\mu)$ vanishes and $(f, \mu)$ is isomorphic to a Bernoulli shift,

\item  $f$ has more than one ergodic  measure of maximal entropy  measure, all of them with non-vanishing central Lyapunov exponent. 
\end{enumerate}
\end{theorem}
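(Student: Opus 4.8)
The plan is to transfer the problem to the space of center leaves and then to analyze how a maximal entropy measure disintegrates along the center foliation. Since the center leaves are compact, one-dimensional circles and $f$ is dynamically coherent, the center foliation is a Seifert fibration (by Epstein's theorem on foliations of compact $3$-manifolds by circles), so $f$ descends to a homeomorphism $g$ of the quotient $N = M/\mathcal{F}^c$, which inherits a hyperbolic (Anosov-type) structure from the surviving splitting $E^s \oplus E^u$. The first step is to prove the entropy equality $h_{\topo}(f) = h_{\topo}(g)$: because the fibers are circles permuted by $f$, the center direction carries no topological entropy and all complexity is concentrated in the base. As $g$ is conjugate to an Anosov map it has a unique measure of maximal entropy $\bar\mu$, and using the Ledrappier--Walters relative variational principle I would show that every ergodic MME $\mu$ of $f$ projects to $\bar\mu$ and, crucially, has zero fiber (relative) entropy.

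The heart of the argument is the study of the conditional measures $\{\mu_x^c\}$ obtained by disintegrating $\mu$ along center leaves over $\bar\mu$. When the central Lyapunov exponent $\lambda_c(\mu)$ vanishes, I would invoke Ledrappier's invariance principle in the form developed by Avila and Viana: the zero-exponent hypothesis forces the disintegration to be invariant under both stable and unstable holonomies. Combining this holonomy invariance with the accessibility property (so that $su$-paths connect $\bar\mu$-almost every pair of center circles) pins the family $\{\mu_x^c\}$ down uniquely, whence there is at most one ergodic MME with $\lambda_c = 0$, and its conditionals are the (non-atomic) holonomy-invariant ones. Conversely, when $\lambda_c(\mu) \neq 0$, the one-dimensionality of the center together with the vanishing fiber entropy established above should force the conditionals to be purely atomic; ergodicity of $\bar\mu$ makes the number of atoms constant almost everywhere, and I would bound this number by a dynamical quantity tied to the return behaviour on center circles, yielding only finitely many such ergodic measures.

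This produces the dichotomy. If some ergodic MME has $\lambda_c = 0$, the uniqueness just obtained shows it is the only neutral one; I would then exclude the coexistence of a hyperbolic MME by a rigidity argument on the (compact convex) simplex of MMEs, on which $\mu \mapsto \lambda_c(\mu)$ is affine: the holonomy-invariant, non-atomic conditional structure of the neutral measure and the local product structure it induces are incompatible with the atomic conditionals required by a nonzero exponent, so the neutral measure is the \emph{unique} MME and we are in case~(1). Otherwise every ergodic MME is hyperbolic and the finiteness count of the previous paragraph gives case~(2). Finally, in case~(1) I would upgrade ergodicity to the Bernoulli property: accessibility together with the neutral center yields the Kolmogorov (K) property for $(f,\mu)$ via the Pugh--Shub and Burns--Wilkinson machinery, and an Ornstein--Weiss-type criterion for smooth systems promotes K to Bernoulli.

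The main obstacle I anticipate is the atomicity-and-counting step in the hyperbolic case, together with the rigidity used to exclude coexistence. Proving that a nonzero center exponent and zero fiber entropy genuinely force finitely supported conditionals on each center circle, and extracting a uniform bound on the number of atoms independent of the leaf, is the technically delicate part; it is here that the compactness and one-dimensionality of the center, the precise disintegration estimates, and the interplay between the exponent and the fiber-entropy vanishing must be exploited most carefully.
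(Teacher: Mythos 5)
First, a point of order: the paper you were given does not prove Theorem \ref{dichotomy} at all. It is quoted as background and attributed to \cite{hertz2012maximizing}, so your attempt can only be measured against the proof in that reference (whose ingredients the present paper re-uses in Section 3). Your skeleton does match that proof's strategy: pass to the leaf space $M/\mathcal{F}^c$, on which the induced map is a hyperbolic homeomorphism of $\mathbb{T}^2$ with a unique measure of maximal entropy; use the Ledrappier--Walters formula to identify the MMEs of $f$ with the zero-relative-entropy lifts of that measure; invoke the Ledrappier/Avila--Viana invariance principle when $\lambda_c(\mu)=0$ (cf.\ Theorem \ref{equiv} in the paper); and use atomicity of the center conditionals when $\lambda_c(\mu)\neq 0$. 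Up to this point the outline is sound.

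However, the three steps that actually carry the theorem are missing or wrong in your proposal. (i) \emph{Finiteness in the hyperbolic case}: knowing that each hyperbolic ergodic MME has finitely many atoms per center circle does not bound the number of such measures --- distinct ergodic MMEs have disjoint atom sets, and a circle accommodates infinitely many disjoint finite sets. The proof in \cite{hertz2012maximizing} needs more: the $u$- (resp.\ $s$-) holonomy invariance of those atomic conditionals, the twin-measure construction (Proposition \ref{Twin measure} of the present paper), and a Pesin-block/absolute-continuity-of-holonomy argument of exactly the kind the present paper uses to separate supports of distinct ergodic measures. Your ``dynamical quantity tied to the return behaviour on center circles'' is a placeholder, not an argument, and you concede as much. (ii) \emph{Exclusion of coexistence}: the ``rigidity on the simplex'' reasoning is not valid. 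Distinct ergodic measures are mutually singular, so one MME having non-atomic holonomy-invariant conditionals while another has atomic ones is not in itself contradictory, and affineness of $\mu\mapsto\lambda_c(\mu)$ only says that convex combinations interpolate exponents. The actual mechanism is: for the neutral measure, accessibility upgrades essential bi-invariance of the disintegration to a continuous, $f$-equivariant, fully supported, non-atomic family of conditionals on \emph{every} center circle (Avila--Santamaria--Viana; see also \cite{avila2015absolute}); if a hyperbolic ergodic MME coexisted, Pesin theory would contract some center arc exponentially, while equivariance of this family forces the conditional measure of that arc to be preserved, contradicting non-atomicity. This one argument simultaneously kills hyperbolic companions and yields uniqueness in case (1). (iii) \emph{Bernoulli}: the Pugh--Shub/Burns--Wilkinson machinery concerns stable ergodicity of \emph{volume} (julienne density points) and does not apply to measures of maximal entropy; worse, ``K implies Bernoulli'' is false in general (Ornstein), so no criterion can simply ``promote'' K to Bernoulli. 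The correct route is that $(f,\mu)$ is a weakly mixing isometric circle extension of a Bernoulli system (the Anosov MME), hence Bernoulli by Rudolph's theorem on isometric extensions.
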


More recently, in \cite{ures2021maximal}, the authors fully characterized the number of ergodic measures of maximal entropy for partially hyperbolic diffeomorphisms on 3-manifolds other than $\mathbb{T}^3$. Specifically, they proved the following result:

\begin{theorem} \label{dichotomy1} \cite{ures2021maximal}
 Let $f$ be a $C^2$ partially hyperbolic diffeomorphism on a 3-dimensional nilmanifold $M \neq \mathbb{T}^3$. Then
\begin{itemize}
    \item[(a)]  either $f$ has a unique maximal measure, in which case $f$ is conjugate to a rotation extension of an Anosov diffeomorphism, and the maximal measure is supported on the whole $M$ and it has vanishing center exponent;
\item[(b)] or $f$ has exactly two ergodic maximal measures $\mu^{+}, \mu^{-}$, with positive and negative center Lyapunov exponents, respectively.
\end{itemize}
\end{theorem}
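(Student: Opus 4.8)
The plan is to reduce the problem to the base Anosov dynamics and then to a fiberwise analysis along the center circles. First I would record the structural input: on a non-toral $3$-dimensional nilmanifold $M$, every partially hyperbolic diffeomorphism is dynamically coherent and its center foliation is a fibration by circles over $\mathbb{T}^2$ (by the classification of partially hyperbolic systems on nilmanifolds), and the induced quotient map $\bar f$ on the leaf space $\mathbb{T}^2$ is topologically conjugate to a linear Anosov automorphism. Moreover, the strong stable and strong unstable foliations are minimal; this is precisely the place where the hypothesis $M\neq\mathbb{T}^3$ enters, and it will be essential later for excluding intermediate cases and for the full-support statement.

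Second, I would carry out the entropy reduction. Since the center leaves are circles, the fiberwise topological entropy vanishes, so by the Ledrappier--Walters variational formula (equivalently Abramov--Rokhlin) one obtains $h_\mu(f)=h_{\bar\mu}(\bar f)$ for every $f$-invariant $\mu$ with projection $\bar\mu$. Because $\bar f$ is conjugate to a linear Anosov map it carries a unique measure of maximal entropy $\bar\mu$; therefore the maximal measures of $f$ are exactly the ergodic $f$-invariant measures that project to $\bar\mu$, and the center Lyapunov exponent $\lambda_c$ of such a measure is precisely its fiberwise Lyapunov exponent.

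Third --- the technical heart --- I would analyze the ergodic lifts of $\bar\mu$ through the induced (non-i.i.d.) random dynamics of circle diffeomorphisms over $(\bar f,\bar\mu)$. Using an Antonov / Deroin--Kleptsyn--Navas-type trichotomy for circle cocycles, together with minimality of the strong foliations (which rules out a finite invariant set inside the fibers), I expect exactly two regimes. In the contracting (synchronizing) regime the random system admits an attracting measurable invariant section, producing one ergodic lift $\mu^-$ with $\lambda_c<0$; applying the same argument to $f^{-1}$ yields a repelling section and a lift $\mu^+$ with $\lambda_c>0$, and one then checks these are the \emph{only} two ergodic lifts. In the remaining regime the cocycle is measurably, and then genuinely, conjugate to a rotation extension, giving a single lift whose fiberwise measure is Lebesgue, with $\lambda_c=0$ and full support.

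Finally I would assemble the dichotomy and upgrade the measurable conjugacy to a topological one in case (a), using $C^2$ regularity and the algebraic rigidity of the non-toral nilmanifold to conclude that $f$ is conjugate to a rotation extension of an Anosov diffeomorphism. The main obstacle I anticipate is exactly this fiberwise step: proving that in the non-rotation regime there are \emph{exactly} two ergodic lifts (not merely finitely many) with the asserted supports and exponents, and, separately, establishing the rotation-extension rigidity in case (a). Both require careful use of the minimality of the strong foliations and of measure-rigidity techniques, which is where the assumption $M\neq\mathbb{T}^3$ does the real work.
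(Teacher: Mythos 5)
First, a point of order: the paper does not prove this statement at all --- it is quoted verbatim from \cite{ures2021maximal}, so your proposal has to be measured against the Ures--Viana--Yang proof and against the machinery that the present paper imports from it (the Gibbs measures $m^u_x$ of Theorem \ref{equiv}, the twin-measure construction of Proposition \ref{Twin measure}, and accessibility). Your first two steps (circle-bundle structure over an Anosov base, and the Ledrappier--Walters reduction $h_\mu(f)=h_{\bar\mu}(\bar f)$, so that maximal measures are exactly the ergodic lifts of the unique maximal measure of the base) are correct and do match the actual scheme.

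The gaps are in steps one and three, and they sit exactly where the theorem's difficulty lies. You assert that the hypothesis $M\neq\mathbb{T}^3$ enters through \emph{minimality of the strong foliations}; that is not the mechanism, and minimality is neither known nor needed here. The actual input is \emph{accessibility}: on a non-toral $3$-nilmanifold every partially hyperbolic diffeomorphism is accessible, because non-accessibility forces the existence of a compact $su$-leaf (Theorems \ref{is-infra-AB} and \ref{hamm} in the paper, due to Hammerlindl), i.e.\ an $su$-torus, and such a torus cannot exist when the center circle bundle is non-trivial. Basing your exclusion of ``finite invariant sets inside the fibers'' on an unproved minimality claim leaves a hole, and it also obscures why the same dichotomy genuinely fails on $\mathbb{T}^3$ (where $su$-tori do occur, as in Theorems \ref{Teo1} and \ref{teo2}). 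Second, the Antonov/Deroin--Kleptsyn--Navas trichotomy you invoke is a theorem about i.i.d.\ random walks (stationary measures) on the circle; there is no off-the-shelf version for the non-i.i.d.\ cocycle over $(\bar f,\bar\mu)$ that you need, and producing one is essentially equivalent to the invariance principle (Avila--Viana, Tahzibi--Yang) that the real proof uses: a vanishing center exponent forces the center disintegration to be $s$- and $u$-holonomy invariant and continuous, and accessibility then yields the rigidity (rotation-extension conjugacy, full support) of case (a). Likewise, in the hyperbolic regime the ``exactly two'' count is not a routine verification: it rests on the fact that any maximal measure with $\lambda_c\le 0$ has conditionals along unstable plaques equal to the reference family $\{m^u_x\}$ (Theorem \ref{equiv}), combined with a Pesin stable-manifold/basin/holonomy argument (as in the Proposition of Section 3 of this paper) to show there is at most one such measure, and the twin-measure construction to produce the one of opposite sign. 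All of this is compressed in your proposal into ``one then checks,'' so as written the argument does not close.
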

We emphasize that partially hyperbolic diffeomorphisms defined on 3-manifolds, other than $\mathbb{T}^3$, are accessible. In the non-accessible context, the authors of \cite{rocha2022number} addressed the problem of the number of maximal entropy measures for partially hyperbolic diffeomorphisms with a compact central foliation on $\mathbb{T}^3$, under certain hypotheses on the dynamics of the center foliation. More specifically, they proved the following theorem:

\begin{theorem}\cite{rocha2022number}
Let $f: \mathbb{T}^3 \rightarrow \mathbb{T}^3$ be a $C^2$ dynamically coherent partially hyperbolic diffeomorphism with compact center leaves. Suppose that $f$ has a periodic center leaf with 1-Morse-Smale dynamics, then at least one of the following occurs:
\begin{enumerate}
    \item[(1)] Either the number of ergodic maximal measures is precisely two (one with positive center exponent and one with negative center exponent); 
    \item[(2)] or there exists an invariant su-torus.
\end{enumerate}
\end{theorem}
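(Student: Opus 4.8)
The plan is to exploit the fiber-bundle structure forced by compact one-dimensional center leaves. First I would recall that for a dynamically coherent partially hyperbolic diffeomorphism of $\mathbb{T}^3$ with compact center leaves the center foliation $\mathcal{F}^c$ is a circle bundle, and the quotient $\pi:\mathbb{T}^3\to\mathbb{T}^3/\mathcal{F}^c\cong\mathbb{T}^2$ conjugates $f$ to a linear Anosov automorphism $A$ of $\mathbb{T}^2$. Because each center leaf is a circle and the induced fiber dynamics is a family of circle homeomorphisms, the topological fiber entropy vanishes; by the Ledrappier--Walters relative variational principle (equivalently Abramov--Rokhlin), $h_{\topo}(f)=h_{\topo}(A)=\log\lambda$, where $\lambda>1$ is the unstable eigenvalue of $A$. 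Moreover every measure of maximal entropy $\mu$ of $f$ must satisfy $h_{\pi_*\mu}(A)=h_{\topo}(A)$, so $\pi_*\mu$ equals the unique MME $\nu$ of $A$ (Haar measure) and $\mu$ has zero fiber entropy over $\nu$. Thus the whole problem reduces to classifying the $f$-invariant ergodic measures that project to $\nu$ and have vanishing fiber entropy.

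Second, I would analyze the disintegration $\{\mu_x\}_{x}$ of such a $\mu$ along the center circles. The central claim --- and the technical heart of the argument --- is a rigidity statement: zero fiber entropy, together with the hypothesis that some periodic center leaf $\gamma_0$ carries Morse--Smale dynamics with a single attracting and a single repelling periodic point, forces the conditional measures $\mu_x$ to be atomic and concentrated on periodic points of the center dynamics whose center Lyapunov exponent has a definite sign. Concretely, I would argue that if the center exponent $\lambda_c(\mu)$ were zero, the conditionals would have to be non-atomic and invariant under the fiber dynamics, which is incompatible with the structurally stable, gradient-like behavior propagated from $\gamma_0$ through the transitive base; this is where a Ledrappier--Young / Ruelle-inequality type estimate relating fiber entropy, the center exponent, and the pointwise dimension of $\mu_x$ is decisive. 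Consequently $\lambda_c(\mu)\neq 0$, and the sign of $\lambda_c(\mu)$ splits the ergodic MMEs into a contracting-center class ($\lambda_c<0$) and an expanding-center class ($\lambda_c>0$), with $\mu_x$ supported on attracting, respectively repelling, periodic center points.

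Third, I would construct the two candidate measures. Taking the attracting fixed point $a\in\gamma_0$ (center-contracting) and the repelling fixed point $r\in\gamma_0$ (center-expanding), their unstable, respectively stable, manifolds project onto dense subsets of $\mathbb{T}^2$; equidistributing the pieces of these manifolds (equivalently, spreading the attracting/repelling periodic center orbits according to the base periodic orbits, which equidistribute to $\nu$) yields $f$-invariant ergodic measures $\mu^-$ and $\mu^+$ projecting to $\nu$ with $\lambda_c(\mu^-)<0<\lambda_c(\mu^+)$ and zero fiber entropy, hence two genuine MMEs.

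Finally, I would prove the dichotomy by examining how the attracting (resp. repelling) periodic center points continue across center leaves. By structural stability of the Morse--Smale fiber dynamics and continuity of the center holonomy, the attracting point $a$ admits a local continuation over the base; following it along the unstable and stable leaves of the transitive Anosov $A$ produces a well-defined monodromy. If this monodromy is trivial, the continuations patch into a continuous $f$-invariant section of the circle bundle whose image is a $2$-torus tangent to $E^s\oplus E^u$, i.e.\ an invariant su-torus, giving alternative (2). If the monodromy is nontrivial, the attracting periodic points cannot form a section; instead their closure supports the single ergodic measure $\mu^-$ and, symmetrically, the repelling ones support $\mu^+$, so that $\mu^+,\mu^-$ exhaust the ergodic MMEs and we are in alternative (1). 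The main obstacle I anticipate is the rigidity step of the second paragraph: controlling the conditional measures precisely enough to exclude a vanishing center exponent and additional ergodic MMEs, which requires carefully combining the relative variational principle with the normally hyperbolic, Morse--Smale structure along the dense set of periodic center leaves.
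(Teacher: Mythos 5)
First, a point of order: the paper you were asked about does not prove this statement at all --- it is quoted as background from \cite{rocha2022number} --- so your proposal can only be measured against the known strategy of that proof, whose main ingredients (the invariance principle of \cite{avila2015absolute}/\cite{tahzibi2019invariance}, twin measures, and accessibility classes) this paper does use in its own arguments.

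The central step of your proposal is wrong, not merely incomplete. In your second paragraph you claim that zero fiber entropy plus the Morse--Smale periodic leaf forces $\lambda_c(\mu)\neq 0$ for \emph{every} ergodic MME, on the grounds that $\lambda_c(\mu)=0$ would force non-atomic center conditionals, incompatible with the gradient-like leaf dynamics. This has the logic of the theorem backwards: alternative (2) is a genuine conclusion, not an obstruction to be excluded, and an invariant $su$-torus can perfectly well carry an MME with vanishing center exponent, so ``no zero-exponent MME'' is not provable under the hypotheses. The correct mechanism (used in \cite{rocha2022number}, and also in this paper's proof of Theorem \ref{Teo1} via Theorem D of \cite{avila2015absolute}) runs the other way: if $\lambda_c(\mu)=0$, the invariance principle yields a \emph{continuous} disintegration $\{\mu^c_x\}$ along center leaves, invariant under both $s$- and $u$-holonomies; on the periodic Morse--Smale leaf the conditional measure must be invariant under the leaf dynamics, hence \emph{atomic}, supported on the finitely many periodic points; continuity and $su$-holonomy invariance then propagate these atoms to every center leaf, and their union is a compact invariant set meeting each center leaf in finitely many points --- precisely the invariant $su$-torus of case (2). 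So atomic conditionals with zero exponent are not the contradiction you want; they are exactly how case (2) arises, and your claimed incompatibility (the ``Ledrappier--Young / Ruelle-inequality type estimate'') is unsubstantiated.

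Two further steps would also fail as written. The ``exactly two'' count in case (1) is asserted, not proved: saying that the closure of the attracting periodic points ``supports the single ergodic measure $\mu^-$'' assumes uniqueness of the negative-exponent MME, which requires a real argument --- the $cs$-holonomy / Pesin stable manifold / ergodic basin intersection argument (the unlabeled proposition in Section 3 of this paper, going back to \cite{hertz2012maximizing} and \cite{ures2021maximal}), while existence of MMEs of both signs is what the twin-measure construction (Proposition \ref{Twin measure}) provides. Finally, your monodromy construction is not well defined: center leaves near $\gamma_0$ are not $f$-invariant, so there is no Morse--Smale dynamics on them to which structural stability could apply, and whether the $su$-holonomy continuation of the attracting point closes up around loops is precisely the question of whether its accessibility class is a compact codimension-one submanifold or is open (the alternative from \cite{rodriguez2006accessibility} quoted in the paper). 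Implemented correctly, your ``monodromy'' step simply becomes the accessibility-class/invariance-principle argument you were trying to bypass.
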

In the previous theorem, by an $su$-torus, we mean a $2$-torus that is an accessibility class. Another result proved in \cite{rocha2022number} was an interesting dichotomy regarding the existence of a periodic leaf with dynamics of irrational rotation number:

\begin{theorem}\cite{rocha2022number}
    Let $f: \mathbb{T}^3 \rightarrow \mathbb{T}^3$ be a $C^2$-partially hyperbolic diffeomorphism, dynamically coherent with compact central leaves with one periodic leaf with irrational rotation number then:
    \begin{enumerate}
        \item[(1)] Either there is unique measure of maximal entropy $\mu$, moreover $\lambda^c(\mu)=0$ and $f$ is conjugate to rotation extension of Anosov homeomorphism or
        \item[(2)] $f$ admits exactly two ergodic measures of maximal entropy. Both measures are hyperbolic and have opposite sign of center Lyapunov exponent.
    \end{enumerate}
\end{theorem}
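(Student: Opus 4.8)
The plan is to push the whole question down to the base dynamics and then count the ergodic lifts of the unique base maximal measure through the center Lyapunov exponent. First I would use dynamical coherence together with the compactness of the center leaves to form the quotient $\pi\colon\mathbb{T}^3\to\mathbb{T}^2$ by the center foliation; the induced homeomorphism $\bar f\colon\mathbb{T}^2\to\mathbb{T}^2$ is topologically conjugate to the linear Anosov automorphism $A$ that $f$ induces in homology, so $h_{\mathrm{top}}(\bar f)=\log|\lambda_u|$, where $\lambda_u$ is the expanding eigenvalue of $A$. Since the fibers of $\pi$ are circles and $C^2$ circle diffeomorphisms have vanishing topological entropy, Bowen's inequality for the factor map $\pi$ yields $h_{\mathrm{top}}(f)=h_{\mathrm{top}}(\bar f)=\log|\lambda_u|$, and the Ledrappier--Walters relative variational principle gives $h_\mu(f)=h_{\pi_*\mu}(\bar f)$ for every invariant $\mu$, the fiber entropy being zero. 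As $\bar f$ has a unique measure of maximal entropy $\bar\mu_0$, it follows that an ergodic $\mu$ is a measure of maximal entropy for $f$ if and only if $\pi_*\mu=\bar\mu_0$. The problem is thereby reduced to counting the ergodic $f$-invariant measures lying over $\bar\mu_0$.

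The second step organizes these lifts by the center exponent $\lambda^c(\mu)=\int\log\|Df|_{E^c}\|\,d\mu$. Using the Ledrappier--Young entropy formula I would argue that an ergodic lift with $\lambda^c(\mu)>0$ must have absolutely continuous disintegration along the unstable foliation, i.e.\ be a Gibbs $u$-state, since otherwise its entropy would drop below $\log|\lambda_u|$; symmetrically a lift with $\lambda^c(\mu)<0$ is a Gibbs $u$-state for $f^{-1}$. Invoking the uniqueness of the ergodic $u$-state over a fixed transitive hyperbolic base measure $\bar\mu_0$ (the Hopf/Avila--Viana argument in the one-dimensional center), there is at most one ergodic lift with positive center exponent, say $\mu^+$, and at most one with negative center exponent, say $\mu^-$.

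The decisive step treats lifts with $\lambda^c(\mu)=0$, and here the periodic leaf $L$ with irrational rotation number is essential. By the Avila--Viana invariance principle a vanishing center exponent forces the disintegration of $\mu$ along the center circles to be invariant under the stable and unstable holonomies. On the orbit of $L$ the return map $f^p|_L$ is a $C^2$ circle diffeomorphism with irrational rotation number, hence by Denjoy's theorem minimal and uniquely ergodic, with full-support invariant measure of zero exponent. Spreading the holonomy invariance along this minimal leaf, I would upgrade the center conditionals to be equivalent to arclength on every fiber, which is exactly the assertion that $f$ is topologically conjugate to a rotation extension of $\bar f$; the lift of $\bar\mu_0$ is then unique and has $\lambda^c=0$. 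In particular a single zero-exponent ergodic lift turns the whole system into a rotation extension and therefore cannot coexist with a hyperbolic lift, which is the exclusivity separating the two alternatives.

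Finally I would assemble the dichotomy. If some ergodic maximal measure has zero center exponent, the rigidity above puts us in alternative (1). Otherwise every ergodic maximal measure is hyperbolic, so by the second step the candidates are exactly $\mu^+$ and $\mu^-$; since the sets of maximal measures of $f$ and of $f^{-1}$ coincide while $\lambda^c_{f^{-1}}=-\lambda^c_f$, the roles of $\mu^+$ and $\mu^-$ are interchanged by passing to the inverse, so both occur and $\lambda^c(\mu^+)>0>\lambda^c(\mu^-)$, giving exactly two ergodic maximal measures with opposite center exponents as in (2). I expect the main obstacle to be precisely the decisive third step: making rigorous the passage from holonomy-invariant center conditionals to arclength conditionals, and hence to a genuine rotation extension, in the possibly non-accessible setting, where one must exploit the minimality furnished by the irrational rotation number to compensate for the failure of accessibility and to exclude an exceptional invariant $su$-torus.
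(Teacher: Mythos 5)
The statement you were asked to prove is quoted background: it is a theorem of \cite{rocha2022number}, and this paper gives no proof of it. So your attempt can only be measured against the toolkit the paper assembles in Section 3 (which is essentially the toolkit of \cite{rocha2022number}), and against that standard your first step (reduction to counting ergodic lifts of the unique base maximal measure, via $h_{\topo}(f)=h_{\topo}(f_c)$ and Ledrappier--Walters) is fine, but the two counting steps contain genuine gaps. First, you have the Tahzibi--Yang rigidity backwards and in the wrong category: Theorem \ref{equiv} (from \cite{tahzibi2019invariance}) says that an MME with $\lambda_c(\mu)\leq 0$ has unstable conditionals equal to the reference family $\{m^u_x\}$ obtained by lifting the base conditionals --- not that $\lambda_c>0$ forces \emph{absolutely continuous} unstable disintegration. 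Bowen--Margulis-type measures are in general not $u$-Gibbs states, so ``a.c.\ disintegration along $\mathcal{F}^u$'' is not the right notion at all. More seriously, the uniqueness you then invoke (``at most one ergodic lift per sign of $\lambda_c$, by a Hopf/Avila--Viana argument'') is simply false without further hypotheses: for $A\times g$ on $\mathbb{T}^2\times S^1$, with $g$ a Morse--Smale circle diffeomorphism having two attracting and two repelling fixed points, there are four ergodic MMEs, two of each sign, each supported on its own $su$-torus. The Hopf-type holonomy argument only yields \emph{disjointness} of supports (this is exactly the unnumbered Proposition in Section 3 of the paper); to upgrade disjointness to uniqueness one must show that the supports of two same-sign MMEs必 intersect, and that is precisely where the irrational-rotation-number hypothesis has to enter (Denjoy plus minimality of $f^p|_L$ force every closed invariant $u$-saturated set to contain $L$, ruling out the $su$-tori of the counterexample). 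You defer the use of that hypothesis entirely to the zero-exponent step, so your step 2 collapses as stated.

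Second, your final symmetry argument does not prove that both signs occur in alternative (2). If $f$ had a single ergodic MME $\mu^+$ with $\lambda_c(\mu^+)>0$, then the MMEs of $f^{-1}$ would again be the single measure $\mu^+$, now with negative exponent for $f^{-1}$; nothing is interchanged and no contradiction arises. The correct mechanism is the twin-measure construction (Proposition \ref{Twin measure}, from \cite{hertz2012maximizing} and \cite{Ali}): an ergodic MME with $\lambda_c<0$ produces an isomorphic invariant measure of the same entropy with $\lambda_c\geq 0$, and this --- not the $f\leftrightarrow f^{-1}$ symmetry --- is what forces a second ergodic MME once all MMEs are hyperbolic. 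Two smaller points in your step 3 also need repair: Denjoy's theorem requires $C^2$ (or $C^{1+\mathrm{bv}}$) regularity of the circle dynamics, while the center leaf is a priori only $C^1$; the paper's proof of Theorem \ref{teoaux} shows the kind of argument needed (zero exponents on the leaf give $2$-normal hyperbolicity, hence $C^2$ regularity of the invariant leaf). And to pass from holonomy-invariant center conditionals to a genuine rotation extension one must first exclude atomic conditionals, which again uses the irrational leaf; ``equivalent to arclength'' is neither needed nor what one gets.
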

In the present paper, we address the problem of the number of ergodic measures of maximal entropy with zero central Lyapunov exponent for partially hyperbolic diffeomorphisms with compact central foliation on $\mathbb{T}^3$,  and we provide a characterization of their supports. In Theorem~\ref{Teo1}, assuming transitivity, we prove that such systems admit at most two ergodic measures of maximal entropy with zero central Lyapunov exponent. Moreover, we show a sharp dichotomy: the support of these measures is either the entire $\mathbb{T}^3$ or the orbit of a compact periodic $su$-torus.  In Theorem \ref{teo2}, we show that if the system has a transversely hyperbolic $su$-torus, then the number of ergodic measures of maximal entropy can be either two or three.

\section{Statement of results}
To simplify notation, we will denote by $\phc^{2}(\mathbb{T}^{3})$ the set of systems $f: \mathbb{T}^3 \rightarrow \mathbb{T}^3$ that are $C^2$ dynamically coherent partially hyperbolic diffeomorphisms with compact center leaves and admit global holonomies,  this means that for any $y \in \mathcal{F}^{\sigma}(x)$, the holonomy map $H_{x, y}^{\sigma}: \mathcal{F}^c(x) \rightarrow \mathcal{F}^c(y)$ is a homeomorphism. Here, for $\sigma \in \{s,u\}$, we define $H_{x, y}^{\sigma}(z)=\mathcal{F}^{\sigma}(z) \cap \mathcal{F}^c(y)$.

We address the problem of the number of ergodic measures of maximal entropy and their supports for systems $f\in \phc^{2}(\mathbb{T}^{3})$. For partially hyperbolic systems on any three-dimensional nilmanifold other than  $\mathbb{T}^3$, this problem has been completely resolved by the work of Marcelo Viana and Ures~\cite{ures2021maximal}. In contrast, the case $M=\mathbb{T}^3$ presents a more subtle and intriguing scenario. A key ingredient in the proof of the aforementioned result is the accessibility property, which, in the context of this work, is not necessarily satisfied.  In the case that $f$ is topologically transitive we establish an upper bound for the number of ergodic measures of maximal  entropy  with zero central Lyapunov exponent. This set of measures is denoted by $\mme_0(f)$ 

\begin{theorem}\label{Teo1}
 Let $f\in \phc^{2}(\mathbb{T}^{3})$. If $f$ is topologically transitive, then $$\# \mme_{0}(f)\leq 2.$$ Moreover, if $\mu\in \mme_{0}(f)$, then:
\begin{itemize}
    \item[(1)] $\text{supp}(\mu)=\mathbb{T}^3$, or
    \item[(2)] $\text{supp}(\mu)$ is the orbit of a compact periodic $su$-torus.
\end{itemize}

\end{theorem}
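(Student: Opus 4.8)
The plan is to pass to the quotient by the center foliation, reduce the entropy bookkeeping to the Anosov base, and then use maximality of the entropy to rigidify the conditional measures along the center circles. Since the center leaves are compact circles and $f$ admits global holonomies, the quotient $\pi:\mathbb{T}^3\to\mathbb{T}^3/\mathcal{F}^c$ is a circle fibration over a $2$-torus and $f$ descends to a homeomorphism $\bar f$ topologically conjugate to a linear Anosov automorphism, with $\mathcal{F}^s,\mathcal{F}^u$ projecting to the stable and unstable foliations of $\bar f$. I would first record two entropy facts: $h_{\topo}(f)=h_{\topo}(\bar f)$ (Bowen's formula for the fibration, the circle fibers carrying no topological entropy), and, for ergodic $\mu$ with $\lambda^{c}(\mu)=0$, the Ledrappier--Walters relative variational principle gives $h_\mu(f)=h_{\pi_*\mu}(\bar f)$, because the conditional entropy over $\pi$ is bounded by the positive part of the center exponent, which vanishes. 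Consequently every $\mu\in\mme_0(f)$ satisfies $h_{\pi_*\mu}(\bar f)=h_{\topo}(\bar f)$, so $\pi_*\mu$ is the unique measure of maximal entropy $\mu_A$ of $\bar f$; in particular all elements of $\mme_0(f)$ share the same, fully supported, projection.

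Writing the disintegration $\mu=\int \mu_{\bar x}\,d\mu_A(\bar x)$ with $\mu_{\bar x}$ a probability on the circle $\pi^{-1}(\bar x)$, the heart of the argument is to show that the family $\{\mu_{\bar x}\}$ is invariant under the stable and unstable holonomies between center fibers: if $\bar y\in\mathcal{F}^u_{\bar f}(\bar x)$ and $H^u$ denotes the lifted unstable holonomy $\pi^{-1}(\bar x)\to\pi^{-1}(\bar y)$, then $H^u_{*}\mu_{\bar x}=\mu_{\bar y}$, and symmetrically for stable holonomies. I would derive this from the local product structure of a maximal measure: since $h_\mu(f)$ equals the base entropy and the center contributes nothing, the unstable (resp.\ stable) conditionals of $\mu$ must coincide with the pullbacks of the Margulis conditionals of $\mu_A$, which are holonomy-equivariant, and feeding this into the equivariance $f_*\mu_{\bar x}=\mu_{\bar f\bar x}$ forces the center conditionals to be holonomy invariant as well. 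Holonomy invariance then immediately implies that $\supp(\mu)$ is saturated by $\mathcal{F}^s$ and $\mathcal{F}^u$, i.e.\ it is a closed, $f$-invariant union of accessibility classes surjecting onto $\mathbb{T}^2$.

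For the support dichotomy I would invoke the structure of accessibility classes for systems in $\phc^{2}(\mathbb{T}^3)$: either the open accessibility class is dense, whence $\supp(\mu)=\mathbb{T}^3$, or the non-open accessibility classes are $su$-tori. In the latter case $\supp(\mu)$, being a proper $su$-saturated invariant closed set surjecting onto $\mathbb{T}^2$, is a finite union of $su$-tori; since $f$ permutes the finitely many $su$-tori and $\mu$ is ergodic, this union is a single periodic orbit, which is exactly alternative $(2)$. Here transitivity is what forbids a genuine foliation by $su$-tori: such a foliation would present $f$ with a circle factor on the leaf space, and a transitive circle factor is conjugate to an irrational rotation, hence minimal, leaving no periodic leaves and returning us to the full-support case.

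Finally, to bound the cardinality I would analyze the holonomy-invariant conditionals through a reduced one-dimensional model. Viewing the zero-exponent lifts of $\mu_A$ as invariant measures of the isometric (center) extension, transitivity collapses any measurable sheet-decomposition — $f$ permutes the sheets, hence acts transitively on them — so there is at most one fully supported ergodic element of $\mme_0(f)$. When instead the induced center-return dynamics has periodic points, these are precisely the marks cut on each fiber by the $su$-tori, and only the center-neutral ones (zero transverse exponent) carry measures in $\mme_0(f)$; the count of such neutral periodic orbits is at most two, as is forced, for instance, when the center return is orientation reversing and thus has exactly two fixed points. I expect the decisive difficulties to be twofold: first, rigorously establishing holonomy invariance of the \emph{center} conditionals from entropy maximality (the local-product-structure/rigidity step), and second, extracting the sharp bound $\#\mme_0(f)\le 2$ from the reduced circle dynamics while respecting the zero-exponent constraint, i.e.\ showing that at most two accessibility classes in the support can be center-neutral $su$-tori. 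The remaining steps are comparatively soft consequences of the quotient construction and of transitivity.
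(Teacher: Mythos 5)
Your soft steps are sound and in fact coincide with the paper's preparation: projecting to the Anosov quotient, identifying $\pi_*\mu$ with the unique maximal measure of the base, and upgrading entropy maximality plus $\lambda_c(\mu)=0$ to holonomy invariance of the conditionals and hence $su$-saturation of $\supp(\mu)$ is exactly the role of the Tahzibi--Yang invariance principle (Theorem \ref{equiv}) and Corollary \ref{saturado}; you should quote these rather than re-derive them, and your first ``decisive difficulty'' disappears. The genuine gap is in the structural step. Your dichotomy ``either the open accessibility class is dense, or the non-open accessibility classes are $su$-tori, so a proper support is a finite union of $su$-tori'' is not a valid structure theorem: in the non-accessible case the non-open accessibility classes form a lamination by \emph{complete} codimension-one submanifolds which need not be compact (Hammerlindl's trichotomy, Theorem \ref{teohamm}, explicitly allows uncountably many non-compact $su$-leaves), and a proper closed invariant $su$-saturated set could a priori be such a lamination, or a Cantor-type family of tori, rather than a finite union of periodic tori. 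Ruling this out is precisely where the paper invests its machinery: Theorem \ref{teoaux} and the accessibility Claim inside the proof of Theorem \ref{Teo1} pass to an AB-system lift (Theorem \ref{is-infra-AB}), restrict to an AI-system on $AC(J)$ via Lemma 8.9 of \cite{hammerlindl2017ergodic}, use the dense orbits coming from transitivity to exclude cases (2) and (3) of Theorem \ref{teohamm}, and use the deck-transformation group acting on $\Lambda_g$ together with Lemma \ref{fix and compact leaf} to produce a compact \emph{periodic} $su$-leaf. Nothing in your sketch substitutes for this analysis; your remark that transitivity forbids a foliation by $su$-tori only treats the easiest of the possible bad configurations.

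The cardinality bound is also asserted rather than proved. ``The count of such neutral periodic orbits is at most two, as is forced, for instance, when the center return is orientation reversing'' is an example, not an argument (and orientation-reversing returns are in any case removed by passing to the iterate that lifts to an AB-system, which preserves the center orientation). The paper's mechanism is different and concrete: on a periodic center circle $S$, the set $K_S$ of points whose accessibility class is a periodic $su$-torus has open complement in $S$ whose connected components are intervals; transitivity forces these intervals to lie in a single periodic orbit of intervals, so $K_S$ is the union of the iterates of the (at most two) boundary points of one interval, giving at most two orbits of periodic $su$-tori. This is combined with the lemma that once a periodic $su$-torus exists, \emph{every} $\mu\in\mme_0(f)$ is supported on the orbit of such a torus --- proved via the invariance principle of \cite{avila2015absolute} and the $f^{\tau}$-invariant collar $V_{\epsilon}(\mathbb{T}_{su})$, whose existence contradicts transitivity if $\supp(\mu)=\mathbb{T}^3$ --- and with the uniqueness of the maximal measure on each such torus, to give $\#\mme_0(f)\le 2$. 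Your ``sheet-collapsing'' argument for uniqueness of a fully supported element of $\mme_0(f)$ is likewise not a proof; in the paper that case is dispatched by the dichotomy of \cite{hertz2012maximizing}: if $\#\mme_0(f)>1$ then $f$ cannot be accessible, which is what triggers the whole AB/AI analysis above.
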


Before proceeding to the proof of the above theorem, we present the following key result, which provides a characterization of the support of maximal entropy measures with zero central Lyapunov exponent.
\begin{theorem}\label{teoaux}
 Let $f\in \phc^{2}(\mathbb{T}^{3})$. If  $f$ admits a measure of maximal entropy  $\mu$ with zero central Lyapunov exponent, $\lambda_c(\mu)=0$, then
 \begin{itemize}
   \item[(1)] $\text{supp}(\mu)=\mathbb{T}^{3}$, or
%   \item[(2)] the support of every measure of maximal entropy with zero central exponent contains a  periodic $su$-torus.
    \item[(2)] the support of $\mu$ contains a  periodic $su$-torus.
 \end{itemize}  
\end{theorem}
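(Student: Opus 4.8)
The plan is to project along the center foliation, identify the projected measure, and then exploit the vanishing of the center exponent through an invariance principle. I may assume $\mu$ is ergodic, so that $\operatorname{supp}\mu$ is a well-defined compact invariant set. Since $f\in\phc^{2}(\mathbb{T}^{3})$ has compact one-dimensional center leaves, the center foliation is a circle fibration $\pi:\mathbb{T}^{3}\to\mathbb{T}^{2}$ and $f$ descends to an Anosov map $\bar f:\mathbb{T}^{2}\to\mathbb{T}^{2}$. First I would record that $h_{\mathrm{top}}(f)=h_{\mathrm{top}}(\bar f)$, the compact one-dimensional fibers carrying no topological entropy, and that by the Abramov--Rokhlin relative formula $h_{\mu}(f)=h_{\pi_{*}\mu}(\bar f)+h_{\mu}(f\mid\pi)$ with $h_{\mu}(f\mid\pi)\ge 0$. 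Maximality of $\mu$ then forces $\pi_{*}\mu$ to be the unique measure of maximal entropy of $\bar f$, i.e. Haar measure (which is fully supported), and forces the fiber entropy to vanish. In particular $\pi(\operatorname{supp}\mu)=\mathbb{T}^{2}$, and because $\pi$ maps strong unstable (resp. stable) leaves onto the unstable (resp. stable) leaves downstairs, the conditionals of $\mu$ along these leaves push forward to the Lebesgue unstable/stable conditionals of Haar; hence $\mu$ is simultaneously a $u$-state and an $s$-state.

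Next I would disintegrate $\mu$ along the center circles, $\mu=\int\mu_{\bar x}\,d(\pi_{*}\mu)(\bar x)$, and use $\lambda_{c}(\mu)=0$. Since $\mu$ is a $u$-state with vanishing center exponent, the invariance principle of Avila--Viana shows that the family $\{\mu_{\bar x}\}$ is invariant under the unstable holonomies; being also an $s$-state, it is invariant under the stable holonomies. By the global holonomy hypothesis these are genuine homeomorphisms $H^{\sigma}_{x,y}:\mathcal{F}^{c}(x)\to\mathcal{F}^{c}(y)$, so $H^{\sigma}_{x,y}(\operatorname{supp}\mu_{\bar x})=\operatorname{supp}\mu_{\bar y}$. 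Writing $K_{\bar x}:=\operatorname{supp}(\mu_{\bar x})$, the fiberwise supports are therefore permuted homeomorphically by the $su$-holonomies, and consequently $\operatorname{supp}\mu=\bigcup_{\bar x}K_{\bar x}$ is saturated by strong stable and strong unstable leaves, i.e. it is a (closed) union of accessibility classes.

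The conclusion then follows from the structure of accessibility classes on $\mathbb{T}^{3}$. If $f$ is accessible, the only nonempty closed $su$-saturated set is $\mathbb{T}^{3}$, which is alternative (1). If $f$ is not accessible, I invoke the classification of Rodriguez Hertz--Rodriguez Hertz--Ures: the set of non-open accessibility classes is a nonempty, finite, $f$-invariant union of compact periodic $su$-tori, and it meets the boundary of every open class. Taking $p\in\operatorname{supp}\mu$, its accessibility class lies in $\operatorname{supp}\mu$; if that class is an $su$-torus we are done, and if it is open then its nonempty boundary lies in the closed set $\operatorname{supp}\mu$ and consists of non-open classes, hence contains a point whose class is a compact $su$-torus contained in $\operatorname{supp}\mu$. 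In either case $\operatorname{supp}\mu$ contains an $su$-torus, which is periodic because it is one of the finitely many tori permuted by $f$; this is alternative (2).

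The \emph{main obstacle} I anticipate is the second step: rigorously verifying the hypotheses of the invariance principle in this global, merely $C^{2}$, and possibly non-accessible setting. This requires checking carefully that $\mu$ is honestly a $u$-state and an $s$-state (which rests both on the vanishing of the fiber entropy and on the good behavior of $\pi$ restricted to strong leaves) and that the \emph{global} holonomies, rather than just local ones, transport the center disintegration, so that the saturation of $\operatorname{supp}\mu$ is genuinely by complete accessibility classes. A secondary delicate point is invoking the classification of accessibility classes on $\mathbb{T}^{3}$ in precisely the form needed, namely finiteness and periodicity of the $su$-tori together with the fact that the boundary of an open accessibility class is a union of such tori.
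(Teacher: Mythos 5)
Your opening steps are essentially sound and run parallel to the paper's own preparation: the paper also derives that $\supp(\mu)$ is $s$- and $u$-saturated (Corollary \ref{saturado}), though it does so via Tahzibi--Yang's characterization of maximal entropy measures with $\lambda_c\le 0$ as Gibbs $u$-states (Theorem \ref{equiv}) rather than via Abramov--Rokhlin plus the Avila--Viana invariance principle; either route gives the saturation. (One caveat: ``assume $\mu$ ergodic'' is not free of cost, since the ergodic components of a non-ergodic measure with $\lambda_c(\mu)=0$ may all be hyperbolic with exponents cancelling on average; the paper shares this implicit restriction, so I will not press it.) The accessible case is also fine: a nonempty closed $su$-saturated set for an accessible system is the whole manifold.

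The genuine gap is your final step. The ``classification'' you attribute to Rodriguez Hertz--Rodriguez Hertz--Ures --- that the non-open accessibility classes form a nonempty \emph{finite}, $f$-invariant union of \emph{compact periodic} $su$-tori --- is not a theorem, and it is false in this setting. Their result (quoted in the paper) gives only that each non-open class is a complete $C^{1}$ codimension-one submanifold and that these classes form a lamination: no finiteness, no compactness, no periodicity. Concretely, take $f=A\times R_\alpha$ on $\mathbb{T}^{3}=\mathbb{T}^{2}\times S^{1}$ with $A$ a linear Anosov map and $R_\alpha$ an irrational rotation: then $f\in \phc^{2}(\mathbb{T}^{3})$, $f$ is non-accessible, every accessibility class is one of the uncountably many tori $\mathbb{T}^{2}\times\{t\}$, and none of them is periodic. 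Here the unique measure of maximal entropy has zero center exponent and full support, so the theorem holds through alternative (1); but your argument, upon noting that $f$ is non-accessible, would ``find'' a compact \emph{periodic} $su$-torus inside $\supp(\mu)$, which does not exist (already $A\times\mathrm{id}$ kills finiteness). Bridging exactly this gap is the bulk of the paper's proof: it reduces to the case where $\supp(\mu)$ is a \emph{minimal} $u$-saturated set, passes to an iterate lifted to an AB-system on a finite cover (Theorem \ref{is-infra-AB}), uses Denjoy's theorem on an invariant center circle to force rational rotation number when the support is proper (this is precisely the mechanism that excludes $A\times R_\alpha$-type behavior), and then runs Hammerlindl's AI-system machinery --- the retraction to the fixed center leaf, the action of the deck group $G$ on $\Lambda_g$, and Lemma \ref{fix and compact leaf}, which says a class projects to a compact $su$-leaf if and only if its trace on the center leaf is fixed by $G$ --- together with a maximum/fixed-point argument to produce an invariant compact $su$-torus, whence periodicity downstairs. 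Without an argument of this kind supplying invariance under the whole deck group, your proposal does not close.
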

It is known that every hyperbolic homeomorphism  on $\mathbb{T}^{2}$ admits a unique measure of maximal entropy. Therefore, if $\mathbb{T}^{2}_{su}$ is a periodic $su$-torus for $f\in \phc^{2}(\mathbb{T}^{3})$, it is easy to see that $f|_{\mathcal{O}(\mathbb{T}^{2}_{su})}$ admits a unique measure of maximal entropy. We say that $\mathbb{T}^{2}_{su}$ is a transversely hyperbolic $su$-torus if $\mathbb{T}^{2}_{su}$ is periodic, and its unique measure of maximal entropy has a central Lyapunov exponent different from zero, that is,  
$$\int_{\mathbb{T}_{su}^{2}} \log \left\|D f \mid E^{c}\right\|d\eta\neq 0,$$
where  $\eta$ is the unique  measure of maximal entropy for $f|_{\mathcal{O}(\mathbb{T}^{2}_{su})}$. We will show in the presence of a transversely hyperbolic $su$-torus, the system admits exactly two or three ergodic measures of maximal entropy.

\begin{theorem}\label{teo2}
Let $f\in \phc^{2}(\mathbb{T}^{3})$. If $f$ has a transversely hyperbolic $su$-torus, then $$2\leq \#\mme_{\erg}(f)\leq 3,$$
where $\mme_{\erg}(f)$ denotes the set of ergodic measures of maximal entropy.
\end{theorem}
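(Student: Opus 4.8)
The plan is to translate the counting of ergodic maximal measures into a counting of invariant measures for a circle cocycle over the hyperbolic quotient, and then to organize these by the sign of the center Lyapunov exponent.

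First I would pass to the quotient by the center foliation. Since $f\in\phc^{2}(\mathbb{T}^{3})$ has compact one-dimensional center leaves and admits global holonomies, the leaf space $\mathbb{T}^{3}/\mathcal{F}^{c}$ is a $2$-torus and $f$ descends, through the projection $\pi:\mathbb{T}^{3}\to\mathbb{T}^{2}$, to an Anosov homeomorphism $A$ of $\mathbb{T}^{2}$, which is intrinsically ergodic with unique maximal measure $m$. Because the fibers of $\pi$ are circles on which $f$ acts by homeomorphisms, the fiber entropy vanishes, so the relativized variational principle gives $h_{\mu}(f)=h_{\pi_{*}\mu}(A)$ for every invariant $\mu$. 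As $m$ is the only maximal measure of $A$, this yields the basic equivalence that $\mu\in\mme_{\erg}(f)$ if and only if $\mu$ is ergodic and $\pi_{*}\mu=m$. Thus it suffices to count the ergodic $f$-invariant probabilities lying over $m$, which I view as invariant measures of the orientation-preserving circle cocycle $x\mapsto g_{x}$ coming from the center holonomies.

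For the lower bound I use the transversely hyperbolic $su$-torus $\mathbb{T}^{2}_{su}$. Its orbit is a union of $2$-tori on which $f$ is a hyperbolic homeomorphism carrying a unique maximal measure $\eta$; since $\pi$ maps $\mathbb{T}^{2}_{su}$ onto $\mathbb{T}^{2}$ we get $\pi_{*}\eta=m$, so $\eta\in\mme_{\erg}(f)$. By hypothesis $\lambda_{c}(\eta)\neq0$, and replacing $f$ by $f^{-1}$ if necessary (an operation that preserves $\phc^{2}(\mathbb{T}^{3})$, fixes the set of maximal measures, and reverses every center exponent) I may assume $\lambda_{c}(\eta)>0$, i.e.\ $\mathbb{T}^{2}_{su}$ is repelling in the center. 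Then on each center circle it meets, $\mathbb{T}^{2}_{su}$ gives a repelling periodic point of the orientation-preserving return map, which forces a second periodic orbit on the complementary arc that is not attracted to $\mathbb{T}^{2}_{su}$. Choosing a point over an $m$-generic base point and outside the thin center-stable set of $\mathbb{T}^{2}_{su}$, taking a weak-$*$ limit of its empirical measures (necessarily lying over $m$, hence maximal) and passing to an ergodic component produces a measure $\mu\in\mme_{\erg}(f)$ whose center-fiber support is disjoint from that of $\eta$. Hence $\mu\neq\eta$ and $\#\mme_{\erg}(f)\geq2$.

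For the upper bound I classify the ergodic maximal measures by the sign of $\lambda_{c}$. If $\lambda_{c}(\mu)\neq0$ then $\mu$ is hyperbolic, and using domination of the center by the strong bundles together with Pesin theory along center leaves, its conditional measures on center circles are atomic and carried by a periodic orbit of the center-return map; thus $\supp(\mu)$ is a periodic $su$-torus, repelling when $\lambda_{c}(\mu)>0$ and attracting when $\lambda_{c}(\mu)<0$. I then argue there is at most one maximal measure of each strict sign, adapting the uniqueness of the hyperbolic maximal measures $\mu^{+},\mu^{-}$ from Theorem \ref{dichotomy1}: the monotonicity (order preservation on fibers) of the center-holonomy cocycle confines the relevant invariant sections to a cyclically ordered list of alternating stability, and the regularity and domination of the system rule out the coexistence of two sections of the same transverse type over $m$. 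Finally, if $\lambda_{c}(\mu)=0$ then Theorem \ref{teoaux} gives that either $\supp(\mu)=\mathbb{T}^{3}$, in which case $\mu$ is the unique measure over $m$ with non-atomic center conditionals (a rotation-extension measure, unique by unique ergodicity in the fibers), or $\supp(\mu)$ contains a periodic $su$-torus, again unique by the same monotonicity. Hence there is at most one maximal measure with $\lambda_{c}=0$, and altogether $\#\mme_{\erg}(f)\leq3$.

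The main obstacle is precisely this rigidity within each sign class: bounding by one the number of invariant sections of the center cocycle of each transverse type. Here transitivity is not available—indeed a transversely hyperbolic $su$-torus produces a proper repeller, so $f$ cannot be transitive—so I cannot appeal to Theorem \ref{Teo1} and must instead exploit the orientation-preserving, order-preserving structure of the holonomy cocycle together with the $C^{2}$ regularity and the domination of the center bundle to exclude extra invariant graphs. Carrying out this exclusion, rather than the reduction to the quotient or the construction of the partner measure, is the technical heart of the argument; the zero-exponent count is then supplied directly by Theorem \ref{teoaux}.
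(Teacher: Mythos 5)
Your reduction of the problem to counting ergodic measures that project to the unique maximal measure $m$ of the quotient Anosov homeomorphism is the same starting point as the paper (Section 3), and that part is correct. The problem is the upper bound, which is exactly where you locate the ``technical heart'' and which you do not actually carry out --- and the strategy you sketch for it cannot work. You claim (i) that every ergodic maximal measure with $\lambda_c(\mu)\neq 0$ has atomic center conditionals carried by a periodic orbit of the center return map and is therefore supported on a periodic $su$-torus, and (ii) that order-preservation of the center holonomies forces at most one maximal measure of each strict sign. Both claims are contradicted by the examples of \cite{rocha2022number} recalled in the paper's closing Remark: these are systems in $\phc^{2}(\mathbb{T}^{3})$ possessing transversely hyperbolic $su$-tori (so they satisfy the hypothesis of the theorem) with exactly three ergodic measures of maximal entropy, all hyperbolic, only two of which are supported on $su$-tori. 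Pigeonhole then forces two of the three to have center exponents of the same sign, which kills (ii); and the third hyperbolic measure is not supported on any $su$-torus, which kills (i). (Atomicity of center conditionals for a hyperbolic maximal measure does give an invariant finite set in almost every center fiber, but the closure of this set of atoms need not be an $su$-torus --- compare the accessible case, Theorem \ref{dichotomy}(2), where hyperbolic maximal measures exist and no $su$-torus can exist.) Your count of the zero-exponent measures (``at most one, by the same monotonicity'') is likewise asserted rather than proven; without transitivity Theorem \ref{Teo1} is unavailable, as you note, but you put nothing concrete in its place.

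The lower bound also has a gap, although the statement you aim at is true. Transverse hyperbolicity of $\mathbb{T}^{2}_{su}$ means $\lambda_c(\eta)\neq 0$ for the maximal measure $\eta$ \emph{of the torus}, which is an average over $\eta$: it does not make the torus a topological repeller (so your parenthetical claim that $f$ cannot be transitive is unjustified), and a point chosen off the torus, even over an $m$-generic base point, may perfectly well have its empirical measures converging to $\eta$, so you have not shown your limit measure is distinct from $\eta$. The paper avoids this entirely: the second measure comes from the twin construction (Proposition \ref{Twin measure}), which produces a maximal measure isomorphic to $\eta_1$ but with $\lambda_c\geq 0$, hence distinct from $\eta_1$. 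For the upper bound the paper's mechanism is completely different from yours: it lifts an iterate of $f$ to an AB-system, restricts to the AI-system $AC(J)$ between consecutive lifts of the torus (Lemma 8.9 of \cite{hammerlindl2017ergodic}), takes the maximal measure $\widehat{\nu}$ with $\lambda_c(\widehat{\nu})\leq 0$ whose support is the minimal $u$-saturated set closest to the torus, and shows that the open invariant region $V$ between the torus and $\supp(\widehat{\nu})$ can meet the support of no maximal measure other than $\widehat{\nu}$; this caps the list of ergodic maximal measures at $\{\eta_1,\nu,\eta_3\}$. Some structural input of this kind --- Hammerlindl's AI-system theory together with the $u$-saturation of supports from Corollary \ref{saturado} --- is what your sign-classification argument would have to be replaced by.
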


For partially hyperbolic systems in $\mathbb{T}^3$, all examples known so far have at most three ergodic maximum entropy measures (\cite{rocha2022number}). This leads us to the following question: 
\begin{question}
If  $f\in \phc^{2}(\mathbb{T}^{3})$  is topologically transitive, then    $\#\mme_{\erg}(f)\leq 3$?
\end{question}

\section{Disintegration of Maximal Entropy Measures along Increasing Partitions}
Let $M$ be a three-dimensional compact manifold, and $f: M \rightarrow M$ be a $C^2$ partially hyperbolic diffeomorphism with compact center leaves. We define the quotient space $M_c$ and quotient dynamics $f_c$ through the natural projection $\pi_c: M \rightarrow M_c:=M / \mathcal{F}^c$, which sends every point in $\mathcal{F}^c(x)$ to $x_c=\pi_c(x)$. It is known that $M_c$ is homeomorphic to $\mathbb{T}^2$, and $f_c$ is an Anosov homeomorphism (See Theorem 3 in \cite{hertz2012maximizing}). Therefore, it is topologically conjugate to an Anosov diffeomorphism on $\mathbb{T}^2$.

We use $\mathcal{W}^i, i \in\left\lbrace s, u\right\rbrace $, to denote the stable and unstable foliations of $f_c$. For any point $a \in M_c$, by $\mathcal{W}^s(a)$ and $\mathcal{W}^u(a)$, we mean the stable and unstable sets of $a$. Since $f$ is conjugate to an Anosov diffeomorphism on $\mathbb{T}^2$, both (topological) foliations $\mathcal{W}^u$ and $\mathcal{W}^s$ are minimal. Observe that
\begin{itemize}
\item $\pi_c\left(\mathcal{F}^{c s}(x)\right)=\mathcal{W}^s(\pi_c(x))$,
\item $\pi_c\left(\mathcal{F}^{c u}(x)\right)=\mathcal{W}^u(\pi_c(x))$.
\end{itemize}

It is well known that Anosov diffeomorphisms on $\mathbb{T}^2$ have a unique measure of maximal entropy. There exists a relationship among measures  of maximal entropy for $f$ and $f_c$; in fact, $h_{\text{top}}(f)=h_{\text{top}}(f_c)$. Furthermore, every measure of maximal entropy $\mu$ for $f$ projects onto the unique measure of maximal entropy $\eta$ for $f_c$. We can still relate these measures through their decomposition along special partitions, which it is defined below:

\begin{definition}
 A measurable partition $\xi$ is called increasing and subordinated to $\mathcal{F}$ if it satisfies:
\begin{itemize}
\item[(a)] $\xi(x) \subseteq \mathcal{F}(x)$ for $\mu$-almost every $x$,
\item[(b)] $f^{-1}(\xi) \geq \xi$ (increasing property),
\item[(c)] $\xi(x)$ contains an open neighborhood of $x$ in $\mathcal{F}(x)$ for $\mu$-almost every $x$.
\end{itemize}
\end{definition}

The existence of an increasing measurable  partition subordinated to an invariant lamination, in general, is a difficult problem. For an uniformly expanding invariant foliation  by a diffeomorphism, there always exists  such a partition (See \cite{yang2021entropy}). We say that an invariant foliation $\mathcal{F}$ is uniformly expanding if there exists $\alpha>1$ such that $\forall x \in M,\left|D f \mid_{T_{x}(\mathcal{F}(x))}\right|>\alpha$. Observe that if $\mathcal{F}$ is an expanding foliation, we have more useful properties for a partition $\xi$ that is increasing and subordinated:
\begin{itemize}
\item[(d)] $\bigvee_{n=0}^{\infty} f^{-n} \xi$ is the partition into points;
\item[(e)] the largest $\sigma$-algebra contained in $\bigcap_{n=0}^{\infty} f^{n}(\xi)$ is $\mathcal{B}_{\mathcal{F}}$ where $\mathcal{B}_{\mathcal{F}}$ is the $\sigma$-algebra of $\mathcal{F}$-saturated measurable subsets (union of entire leaves).
\end{itemize}

From \cite{ures2021maximal}, we can obtain an increasing partition $\xi$ for $f$ along $\mathcal{F}^u$ and  an increasing partition $\xi_{c}$ along $\mathcal{W}^u$ such that $\pi_c\mid_{\xi(x)}:\xi(x)\rightarrow \xi_{c}(x_c)$ is a homeomorphism. 

In fact, let $\eta$ be the unique measure of maximal entropy for $f_c$ on $M_c\equiv \mathbb{T}^2$. Take ${\tilde{B}_1, \cdots, \tilde{B}_k}$ to be a Markov partition for $f_c: M_c \rightarrow M_c$, such that $\eta\left(\cup_{1 \leq i \leq k} \partial \tilde{B}_i\right)=0$. For every $x_c \in \tilde{B}_i \subset M_c$, let $\mathcal{W}_{\text{loc}}^u(x_c)$ be the connected component of $\mathcal{W}^u\left(x_c\right) \cap \tilde{B}_i$ that contains $x_c$, and let $\xi_{c}$ be the partition of $M_c$ whose elements are those local unstable sets. This partition $\xi_{c}$ is measurable and increasing. Let $\{\eta_{x_c}^u: x_c \in M_c\}$ be a  disintegration of $\eta$ relative to this partition (in the sense of Rokhlin \cite{rohlin1949fundamental}). Now, denote $B_i=\pi_c^{-1} (\tilde{B}_i)$ for each $i$. For every $x \in B_i \subset M$, let $\mathcal{F}_{\text {loc }}^u(x)$ be the connected component of $\mathcal{F}^u(x) \cap B_i$ that contains $x$. Denote by $\xi$ the partition of $M$ whose atoms are those local unstable leafs. It is clear that $\xi$ is an increasing partition for $f$. Thus, $\pi_c\mid_{\xi(x)}:\xi(x)\rightarrow \xi_{c}(x_c)$ is a homeomorphism.

We define $m_x^{u}:= (\pi_c)^{*}\eta_{x_c}^u$. For $x$ and $y$ in the same $B_i$, and any $z \in \xi(x)$ we denote by $\mathcal{H}_{x, y}^{c s}(z)$ the unique point in the intersection of $\mathcal{F}_{\text {loc }}^{c s}(z)$ with $\xi(y)$. The map $\mathcal{H}_{x, y}^{c s}: \xi(x) \rightarrow \xi(y)$  is called the center-stable holonomy map from $x$ to $y$.
\begin{lemma}\label{lemaholonomia}
For $\eta$-almost any points $x_c, y_c \in \tilde{B}_i$, we have that $$\left(\mathcal{H}_{x, y}^{cs}\right)_* m_x^u = m_y^u, \text{ for all } x\in \pi_c^{-1}(x_c) \text{ and } y\in \pi_c^{-1}(y_c).$$ 
\end{lemma}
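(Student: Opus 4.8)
The plan is to reduce the statement on $M$ to a corresponding holonomy-invariance statement for the conditional measures $\{\eta^u_{x_c}\}$ of $\eta$ on $M_c$, and then to prove that downstairs statement using the local product structure of the measure of maximal entropy of the Anosov homeomorphism $f_c$. Throughout I read the ``$\nu$-almost'' quantification as being with respect to $\eta$, the measure being disintegrated.

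First I would record the commutation between the center-stable holonomy upstairs and the stable holonomy downstairs. Writing $p_x := \pi_c|_{\xi(x)} : \xi(x) \to \xi_c(x_c)$ for the projection homeomorphism, and $h^s_{x_c,y_c} : \xi_c(x_c) \to \xi_c(y_c)$ for the stable holonomy of $f_c$ inside the rectangle $\tilde B_i$ (sliding along local stable sets of $\mathcal{W}^s$), the identity $\pi_c(\mathcal{F}^{cs}(z)) = \mathcal{W}^s(\pi_c(z))$ forces $\pi_c$ to carry the local $\mathcal{F}^{cs}$-leaf through $z$ into the local $\mathcal{W}^s$-leaf through $\pi_c(z)$. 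Since $\mathcal{H}^{cs}_{x,y}(z)$ is the point of $\xi(y)$ on $\mathcal{F}^{cs}_{\mathrm{loc}}(z)$, its projection is the point of $\xi_c(y_c)$ on the local stable set of $\pi_c(z)$, that is
\[
p_y \circ \mathcal{H}^{cs}_{x,y} = h^s_{x_c,y_c} \circ p_x \quad \text{on } \xi(x).
\]

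Using $m^u_x = (p_x)^{*}\eta^u_{x_c} = (p_x^{-1})_{*}\eta^u_{x_c}$ together with $\mathcal{H}^{cs}_{x,y} = p_y^{-1}\circ h^s_{x_c,y_c}\circ p_x$, a direct pushforward computation collapses the $p_x$-factors and gives $(\mathcal{H}^{cs}_{x,y})_{*} m^u_x = (p_y^{-1})_{*} (h^s_{x_c,y_c})_{*} \eta^u_{x_c}$. Comparing with $m^u_y = (p_y^{-1})_{*}\eta^u_{y_c}$, and using that $p_y^{-1}$ is a homeomorphism (hence injective on measures), the whole claim reduces to
\[
(h^s_{x_c,y_c})_{*} \eta^u_{x_c} = \eta^u_{y_c} \quad \text{for } \eta\text{-a.e.\ } x_c, y_c \in \tilde B_i .
\]
Note that the right-hand side is independent of the lifts $x\in\pi_c^{-1}(x_c)$ and $y\in\pi_c^{-1}(y_c)$, which is exactly what accounts for the ``for all $x,y$'' in the statement.

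The remaining and main step is this stable-holonomy invariance of the unstable conditionals of $\eta$ on $M_c$. Here I would exploit that $\{\tilde B_1,\dots,\tilde B_k\}$ is a Markov partition and that $\eta$ is the unique measure of maximal entropy of the Anosov homeomorphism $f_c$: via the Markov coding, $(f_c,\eta)$ is isomorphic to a mixing subshift of finite type carrying its measure of maximal entropy, which is a Gibbs state for a constant potential and therefore has a local product structure. Concretely, on each rectangle $\tilde B_i$ the measure $\eta$ decomposes (up to normalization) as a product of its stable and unstable conditionals, so that the family $\{\eta^u_{x_c}\}$ is invariant under the stable holonomies $h^s_{x_c,y_c}$, modulo the a.e.\ ambiguity intrinsic to Rokhlin disintegration. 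I expect this to be the \emph{crux}: one must transport the product structure from the symbolic model back to $M_c$ while discarding the boundary set $\cup_i \partial\tilde B_i$ (which has $\eta$-measure zero by the choice of partition), and one must make the ``$\eta$-a.e.\ $x_c,y_c$'' quantification genuinely uniform along the center fibers so that the identity holds simultaneously for every pair of lifts $x,y$. Once the downstairs invariance is established, the reduction above delivers the lemma.
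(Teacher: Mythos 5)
Your reduction is exactly the paper's: you commute the center-stable holonomy with $\pi_c$ (the identity $p_y\circ\mathcal{H}^{cs}_{x,y}=h^s_{x_c,y_c}\circ p_x$ is precisely what justifies the middle equality in the paper's chain of pushforward computations), and your observation that the downstairs statement is independent of the lifts is what produces the ``for all $x,y$'' quantifier. Where you diverge is in the crux, the invariance $(h^s_{x_c,y_c})_*\eta^u_{x_c}=\eta^u_{y_c}$. The paper settles it by invoking the topological conjugacy of $f_c$ to a \emph{linear} Anosov diffeomorphism of $\mathbb{T}^2$: in the linear model the measure of maximal entropy is Lebesgue area, the conditionals $\eta^u_{x_c}$ become normalized Lebesgue length on the unstable plaques, and the stable holonomies inside a Markov rectangle are affine, hence preserve normalized length. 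You instead pass to the symbolic model: the Markov coding sends $(f_c,\eta)$ to a mixing subshift of finite type with its measure of maximal entropy (Parry measure), a Gibbs state for constant potential, whose local product structure on rectangles gives exact (not merely quasi-) invariance of the unstable conditionals under stable holonomy; this transfers back to $M_c$ because the coding is an isomorphism mod the boundary set, which is $\eta$-null by the choice of partition. Both routes are valid. The paper's is shorter and more elementary, but it leans on the special feature of the $\mathbb{T}^2$ setting that the conjugated measure is literally Lebesgue and the holonomies literally affine; your Gibbs argument needs more machinery (and the boundary bookkeeping you correctly flag as the delicate point) but never uses linearity, so it is the version that would survive in settings where no linear model is available.
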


\begin{proof}
Let $x_c, y_c \in \tilde{B}_i$, define the stable holonomy map from $x_c$ to $y_c$, $\tilde{\mathcal{H}}_{x_c, y_c}^s: \xi_{c}\left(x_c\right) \rightarrow \xi_{c}\left(y_c\right)$, where for any $z \in \xi_{c}\left(x_c\right)$,  $\tilde{\mathcal{H}}_{x_c, y_c}^s(z)$ is the unique point where $\mathcal{W}_{\text{loc}}^s(z)$ intersects $\xi_{c}\left(y_c\right)$. Up to conjugacy, we may view $f_c$ as a linear Anosov torus diffeomorphism, and then the stable holonomy maps $\tilde{\mathcal{H}}_{x_c, y_c}^s$ are affine. Therefore, the measure of maximal entropy $\eta$ is the Lebesgue area, and each $\eta_{x_c}^u$ corresponds to the normalized Lebesgue length along the plaque $\xi_{c}\left(x_c\right)$. Since affine maps preserve normalized Lebesgue length, we have that 
\begin{equation}\label{propiedad1}
    \left(\tilde{\mathcal{H}}_{x_c, y_c}^s\right)_* \eta_{x_c}^u = \eta_{y_c}^u, \text{ for }\eta\text{-almost any points } x_c, y_c \in \tilde{B}_i.
\end{equation} 
Let $C_i \subset \tilde{B}_i$ be the set of points that satisfy \ref{propiedad1}, then $\eta(C_i) = \eta(\tilde{B}_i)$. Let $x_c, y_c \in C_i$, if $x \in \pi_c^{-1}(x_c)$ and $y \in \pi_c^{-1}(y_c)$, for any measurable set $I \subset \xi^{u}(y)$, we have that
\begin{eqnarray*}
   \left(\mathcal{H}_{x, y}^{cs}\right)_* m_x^u(I) &=& m_x^u((\mathcal{H}_{x, y}^{cs})^{-1}(I))\\
   &=& \eta_{x_c}^u(\pi_c((\mathcal{H}_{x, y}^{cs})^{-1}(I)))\\
    &=& \eta_{x_c}^u((\mathcal{H}_{x_c, y_c}^s)^{-1}(\pi_c(I)))\\
    &=& (\tilde{\mathcal{H}}_{x_c, y_c}^s)_* \eta_{x_c}^u(\pi_c(I))\\
    &=& \eta_{y_c}^u(\pi_c(I))\\
    &=& m_y^u(I).
\end{eqnarray*}
This concludes the proof of the Lemma.
\end{proof}

\begin{remark}\label{remark1}
The measure $m_x^u := (\pi_c)^{*}\eta_{x_c}^u$ is not defined for all $x \in M$ because  $\eta_{x_c}^u$ is defined for $\eta$-almost every point $x_c \in M_c$. However, by Lemma \ref{lemaholonomia}, we can extend $m^{u}_z$ to every $z \in M$. Indeed, given $z \in M$, there exists $i \in {1,\ldots,k}$ such that $z \in B_i$. Take $x \in \pi_c^{-1}(C_i)$ for $C_i$  defined in the previous lemma and consider the cs-holonomy $\mathcal{H}_{x, z}^{cs}: \xi(x) \rightarrow \xi(z)$. Then define $m^{u}_z = (\mathcal{H}_{x, z}^{cs}){}m^{u}_x$, thus we obtain a family ${m^{u}_x}$ defined for all $x \in M$.
\end{remark}
Using this remark we have the following lemma:
\begin{lemma}\label{lemaprop}
For every $x \in M$,
$$
f_*^{j}\left(\left.m_{f^{-j}(x)}^u\right|_{f^{-j}(\xi(x))}\right)=m_{f^{-j}(x)}^u\left(f^{-j}(\xi(x))\right) \cdot m_{x}^u
$$
\end{lemma}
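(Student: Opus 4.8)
The plan is to reduce the identity to the corresponding statement for the quotient system $(f_c,\eta,\xi_c)$ and then transport it up through the semiconjugacy $\pi_c$. Since $m_x^u=(\pi_c)^{*}\eta^u_{x_c}$ on a full $\eta$-measure set (and is extended by $cs$-holonomy elsewhere via Remark \ref{remark1}), and since $\pi_c\circ f=f_c\circ\pi_c$ maps the atom $f^{-j}(\xi(x))$ homeomorphically onto $f_c^{-j}(\xi_c(x_c))$, it suffices to prove the downstairs identity
$$(f_c^{\,j})_{*}\Big(\eta^u_{f_c^{-j}(x_c)}\big|_{f_c^{-j}(\xi_c(x_c))}\Big)=\eta^u_{f_c^{-j}(x_c)}\big(f_c^{-j}(\xi_c(x_c))\big)\cdot\eta^u_{x_c}$$
for $\eta$-a.e.\ $x_c$, and then pull it back; the total masses match because $\pi_c$ carries $m^u$ to $\eta^u$ and is a homeomorphism on the relevant atoms.

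For the downstairs identity I would use two standard properties of Rokhlin disintegrations. First, because $\eta$ is $f_c$-invariant and $f_c^{\,j}$ carries the partition $f_c^{-j}\xi_c$ bijectively (on atoms) onto $\xi_c$, uniqueness of disintegration shows that the conditional measures of $\eta$ along the finer partition $f_c^{-j}\xi_c$ are $(f_c^{-j})_{*}\eta^u_{f_c^{\,j}(\cdot)}$; in particular the conditional at the atom through $f_c^{-j}(x_c)$ equals $(f_c^{-j})_{*}\eta^u_{x_c}$. Second, since $\xi_c$ is increasing we have $f_c^{-j}\xi_c\geq\xi_c$, so the atom $f_c^{-j}(\xi_c(x_c))$ is contained in $\xi_c(f_c^{-j}(x_c))$; the transitivity (tower) property of conditional measures for nested partitions then identifies the conditional of $\eta$ along $f_c^{-j}\xi_c$ at this atom with the normalized restriction of $\eta^u_{f_c^{-j}(x_c)}$ to $f_c^{-j}(\xi_c(x_c))$. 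Equating the two expressions and clearing the normalizing denominator gives exactly the displayed downstairs identity.

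Finally I would transport everything upstairs. Using $\pi_c\circ f^{\,j}=f_c^{\,j}\circ\pi_c$ and $m^u_\bullet=(\pi_c)^{*}\eta^u_{\pi_c(\bullet)}$, the pushforward $f_*^{\,j}\big(m^u_{f^{-j}(x)}\big|_{f^{-j}(\xi(x))}\big)$ projects under $\pi_c$ to the left-hand side downstairs, while $m^u_{f^{-j}(x)}(f^{-j}(\xi(x)))\cdot m_x^u$ projects to the right-hand side; since $\pi_c$ restricts to a homeomorphism on $\xi(x)$, equality of the projections yields equality of the measures on $\xi(x)$. To pass from $\eta$-a.e.\ $x$ to every $x\in M$ I would invoke the $cs$-holonomy invariance of the family $\{m^u_x\}$ (Lemma \ref{lemaholonomia} together with the extension in Remark \ref{remark1}): both sides of the identity are compatible with the $cs$-holonomy extension and with the action of $f$, so the a.e.\ identity propagates to all leaves.

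The main obstacle I expect is the careful disintegration bookkeeping in the second paragraph—precisely verifying the tower/normalization property for the nested partitions $f_c^{-j}\xi_c\geq\xi_c$ and the invariance formula for the finer disintegration, while keeping track of the almost-everywhere qualifiers—together with the a.e.-to-everywhere upgrade, which must be justified through the holonomy extension rather than by a naive continuity argument.
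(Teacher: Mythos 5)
Your proposal is correct and follows essentially the same route as the paper: both reduce the identity to the quotient Anosov system, establish the downstairs identity for $\{\eta^u_{x_c}\}$ via invariance of $\eta$ and uniqueness of the Rokhlin disintegration, and then lift through $\pi_c$ using Remark \ref{remark1}. The only difference is one of exposition—where the paper simply asserts that the normalized pushforwards $\tilde{\eta}_{x_c}$ form a disintegration of $\eta$ along $\xi_c$, you spell out the two underlying facts (the pushforward of the disintegration along the refined partition $f_c^{-j}\xi_c$, and the tower property for the nested partitions), which is a legitimate and slightly more careful rendering of the same argument.
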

\begin{proof}
Since $\{\eta_{x_c}^{u}\}$ is the decomposition of $\eta$ with respect to $\xi_{c}$, let $$\tilde{\eta}_{x_c}:= \dfrac{(f_c)_*(\eta_{f^{-j}_{c}(x_c)}^u|_{f^{-j}_{c}(\xi_{c}(x_c))})}{\eta_{f^{-j}_c(x_c)}^u(f^{-j}_c(\xi_{c}(x_c))) }.$$
It is easy to see that $\tilde{\eta}_{x_c}$ is a probability measure, and the family $\{\tilde{\eta}_{x_c}\}$ is a disintegration for $\eta$. By the uniqueness of disintegration, we have that $\tilde{\eta}_{x_c}=\eta^{u}_{x_c}$ for almost every $x_c$. Therefore, 
\begin{equation}\label{eqcu1}
    (f_c)_*(\eta_{f^{-j}_{c}(x_c)}^u|_{f^{-j}_{c}(\xi_{c}(x_c))})=\eta_{f^{-j}_c(x_c)}^u(f^{-j}_c(\xi_{c}(x_c))) \cdot \eta_{x_c}^u
\end{equation}
Since $m_x^u:= (\pi_c)^{*}\eta_{x_c}^u$, this Lemma is a consequence of \ref{eqcu1} and Remark \ref{remark1}.
\end{proof}

\begin{theorem}\cite{tahzibi2019invariance}\label{equiv}
  If $\mu$ is measure of maximal entropy for $f$ such that $\lambda_c(\mu)\leq  0$ then $\mu_x\equiv m^{u}_x$ for $\mu$-almost every point $x\in M$.
\end{theorem}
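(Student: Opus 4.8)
The plan is to prove the equivalent statement that the projected unstable conditionals of $\mu$ coincide with those of $\eta$: writing $\{\mu_x\}$ for the disintegration of $\mu$ along the increasing partition $\xi$ and $\theta_x:=(\pi_c)_*\mu_x$, I would show $\theta_x=\eta^u_{x_c}$ for $\mu$-a.e. $x$, where $x_c=\pi_c(x)$. Since $\pi_c|_{\xi(x)}\colon\xi(x)\to\xi_c(x_c)$ is a homeomorphism and $m^u_x=(\pi_c)^*\eta^u_{x_c}$, this is exactly the assertion $\mu_x\equiv m^u_x$.

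First I would record the averaging relation coming from uniqueness of disintegration. Since $(\pi_c)_*\mu=\eta$ and $\pi_c$ sends atoms of $\xi$ onto atoms of $\xi_c$, pushing $\mu=\int\mu_w\,d\hat\mu(w)$ forward by $\pi_c$ and then disintegrating the quotient measure $\hat\mu$ over $M_c/\xi_c$ through the induced map $\bar\pi$ exhibits $\{\int_{\bar\pi^{-1}(y_c)}\theta_w\,d\hat\mu_{y_c}(w)\}$ as a disintegration of $\eta$ along $\xi_c$; by Rokhlin uniqueness it must equal $\{\eta^u_{y_c}\}$, giving
\begin{equation*}
\eta^u_{y_c}=\int_{\bar\pi^{-1}(y_c)}\theta_w\,d\hat\mu_{y_c}(w).
\end{equation*}
Thus each $\eta^u_{y_c}$ is an average of the plaque-conditionals $\theta_w$ over the center fibre, and the content of the theorem is that this average is trivial.

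The comparison is then carried out through entropy. Using the increasing property of $\xi$, invariance, and the information-function formula, for every $n$ one has $n\,h_\mu(f,\xi)=H_\mu(f^{-n}\xi\mid\xi)=-\int\log\theta_x\big(f_c^{-n}\xi_c(x_c)\big)\,d\mu$, which after conditioning on $\xi_c$-classes reads $n\,h_\mu(f,\xi)=\int H_{\theta_w}(\mathcal P_n^{w_c})\,d\hat\mu(w)$, where $\mathcal P_n^{y_c}$ is the finite partition of $\xi_c(y_c)$ induced by $f_c^{-n}\xi_c$. The identical computation downstairs, together with $h_\eta(f_c,\xi_c)=h_\eta(f_c)$ (immediate since $f_c$ is Anosov), gives $n\,h_\eta(f_c)=\int H_{\eta^u_{y_c}}(\mathcal P_n^{y_c})\,d\hat\eta(y_c)$. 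Because the Shannon entropy $\nu\mapsto H_\nu(\mathcal P_n^{y_c})$ is strictly concave and $\eta^u_{y_c}$ is the fibre-average of the $\theta_w$, Jensen's inequality yields $h_\mu(f,\xi)\le h_\eta(f_c)$, and equality for a given $n$ forces $\theta_w$ and $\eta^u_{w_c}$ to agree on the $\sigma$-algebra generated by $\mathcal P_n^{w_c}$ for a.e. $w$.

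It remains to produce equality and upgrade it. Here the hypothesis $\lambda_c(\mu)\le 0$ enters: since the one-dimensional center is non-expanding it creates no entropy, so the strong-unstable partition is entropy-generating and $h_\mu(f,\xi)=h_\mu(f)$; combined with $h_\mu(f)=h_{\topo}(f)=h_{\topo}(f_c)=h_\eta(f_c)$ (as $\mu$ is maximal) this gives $h_\mu(f,\xi)=h_\eta(f_c)$. Hence Jensen is an equality for every $n$, and strict concavity yields $\theta_w=\eta^u_{w_c}$ on each $\mathcal P_n^{w_c}$. Letting $n\to\infty$ and using that $\bigvee_n f_c^{-n}\xi_c$ is the partition into points (property (d) for the expanding foliation $\mathcal W^u$), the generated $\sigma$-algebras increase to the Borel $\sigma$-algebra of $\xi_c(w_c)$, so $\theta_w=\eta^u_{w_c}$ as measures, which is the conclusion. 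The main obstacle is precisely the equality $h_\mu(f,\xi)=h_\mu(f)$: one must argue that under $\lambda_c(\mu)\le0$ all the entropy of $\mu$ is carried by the strong-unstable foliation, which is where the sign hypothesis is indispensable and which rests on a Ledrappier–Young/Ruelle-type bound of the center contribution by $\lambda_c^+(\mu)=0$.
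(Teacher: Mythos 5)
The paper does not actually prove this statement: it is imported as a particular case of Theorem~A of \cite{tahzibi2019invariance} (where the measures $m^u_x$ are called Gibbs measures), so there is no internal argument to compare you against. The honest comparison is with the proof in that reference, and your proposal reconstructs essentially the same argument. Your soft steps are all correct: since $(\pi_c)_*\mu=\eta$ and $\pi_c$ maps $\xi$-atoms homeomorphically onto $\xi_c$-atoms, uniqueness of the Rokhlin disintegration does exhibit $\eta^u_{y_c}$ as the fibre average of the projected conditionals $\theta_w$; the identity $n\,h_\mu(f,\xi)=\int H_{\theta_w}(\mathcal{P}_n^{w_c})\,d\hat\mu(w)$ is valid because the increasing property gives $H_\mu(f^{-n}\xi\mid\xi)=n\,H_\mu(f^{-1}\xi\mid\xi)$ and $\pi_c|_{\xi(x)}$ carries the trace of $f^{-n}\xi$ on $\xi(x)$ bijectively onto the trace of $f_c^{-n}\xi_c$ on $\xi_c(x_c)$; and strict concavity of Shannon entropy, applied fibrewise under the averaging relation, together with the fact that the atoms of $f_c^{-n}\xi_c$ shrink to points, correctly upgrades equality of entropies to equality $\theta_w=\eta^u_{w_c}$ of measures.

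Two ingredients are invoked rather than proved, and you should be clear about their status. The equality $(\pi_c)_*\mu=\eta$ is stated in Section~3 of the paper as known (Ledrappier--Walters plus uniqueness of the maximal entropy measure of the Anosov quotient), so using it is legitimate. The genuinely deep step is the one you flag yourself: $h_\mu(f,\xi)=h_\mu(f)$ when $\lambda_c(\mu)\le 0$. This is not a formal consequence of ``the center is non-expanding''; it requires the Ledrappier--Young theory for $C^{1+\alpha}$ diffeomorphisms --- entropy is carried by the Pesin unstable lamination --- combined with the observation that $\lambda_c(\mu)\le 0$ forces the Pesin unstable manifolds to coincide almost everywhere with the strong-unstable leaves, so that $h_\mu(f)=h_\mu(f,\mathcal{F}^u)=H_\mu(f^{-1}\xi\mid\xi)$. (Strictly speaking this reasoning is per ergodic component, so the hypothesis should be read as a condition on almost every ergodic component, which is how the theorem is used in this paper.) Since this is exactly the input used in \cite{tahzibi2019invariance}, your outline is a correct proof modulo a classical citation, with the caveat that the $C^2$ regularity and the sign hypothesis do all of their work in that single step, not in the disintegration/Jensen part.
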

The previous theorem is a particular case of Theorem A from \cite{tahzibi2019invariance}, where the measures $m^{u}_x$ are referred to as Gibbs measures.
\begin{corollary}\label{saturado}
 Let  $\mu$ be a measure of maximal entropy for $f$:
\begin{itemize}
    \item[(a)]  If $\lambda_c(\mu)\leq  0$ then $\supp(\mu)$ is $u$-saturated set.
    \item[(b)]  If $\lambda_c(\mu)\geq  0$ then $\supp(\mu)$ is $s$-saturated set.
\end{itemize}
   So, if  $\lambda_c(\mu)= 0$ then $\supp(\mu)$ is $s,u$-saturated set.
\end{corollary}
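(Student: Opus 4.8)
The plan is to deduce everything from Theorem~\ref{equiv} together with the fact that the reference measures $m^u_x$ have full topological support on the unstable plaques $\xi(x)$. Indeed, as computed in the proof of Lemma~\ref{lemaholonomia}, after conjugating $f_c$ to a linear Anosov map each $\eta^u_{x_c}$ is the normalized Lebesgue length on the plaque $\xi_c(x_c)$, and since $\pi_c|\xi(x)\colon \xi(x)\to\xi_c(x_c)$ is a homeomorphism, the pullback $m^u_x=(\pi_c)^*\eta^u_{x_c}$ has full support on $\xi(x)$. For part~(a), assume $\lambda_c(\mu)\leq 0$ and write $S=\supp(\mu)$, a closed $f$-invariant set with $\mu(S)=1$. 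By Theorem~\ref{equiv} the unstable conditionals satisfy $\mu_x\equiv m^u_x$ for $\mu$-a.e.\ $x$, so $\mu_x$ has full support on $\xi(x)$ for a.e.\ $x$; the goal is then to promote this local, almost-everywhere statement to the assertion that $S$ is a union of entire unstable leaves.

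First I would show that $\xi(z)\subseteq S$ for $\mu$-a.e.\ $z$. Since $\mu(S)=1$, disintegration gives $\mu_z(\xi(z)\cap S)=1$ for a.e.\ $z$; because $m^u_z$ has full support, $\xi(z)\cap S$ is dense in the plaque $\xi(z)$, and since $S$ is closed in $M$ this dense, relatively closed subset must be all of $\xi(z)$. Setting $G=\{z:\xi(z)\subseteq S\}$ we have $\mu(G)=1$, hence $\mu(G_\infty)=1$ for $G_\infty=\bigcap_{n\geq 0}f^n(G)$. Now using the increasing property of $\xi$ together with the $f$-invariance of $S$: for $z\in G_\infty$ each $f^{-n}z\in G$, so $\xi(f^{-n}z)\subseteq S$ and therefore $f^n(\xi(f^{-n}z))\subseteq f^n(S)=S$; since $\xi$ contains a leafwise neighborhood of its base point and $f$ expands $\mathcal{F}^u$, the nested plaques $f^n(\xi(f^{-n}z))$ exhaust $\mathcal{F}^u(z)$, whence $\mathcal{F}^u(z)\subseteq S$ for every $z\in G_\infty$, i.e.\ for $\mu$-a.e.\ $z$. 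Finally, since $G_\infty\subseteq S$ and $\mu(G_\infty)=1$ we have $S=\overline{G_\infty}$, and the continuity of the unstable foliation lets me pass from a dense set of leaves to all of $S$: given $z\in S$ and $q\in\mathcal{F}^u(z)$, pick $z_k\in G_\infty$ with $z_k\to z$ and $q_k\in\mathcal{F}^u(z_k)$ with $q_k\to q$; then $q_k\in S$ and $S$ closed force $q\in S$. Thus $S$ is $u$-saturated, proving~(a).

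Part~(b) follows by applying~(a) to $f^{-1}$: the class $\phc^{2}(\mathbb{T}^{3})$ is invariant under inversion, $\mu$ is also a measure of maximal entropy for $f^{-1}$, the unstable foliation of $f^{-1}$ is $\mathcal{F}^s$, and $\lambda_c^{f^{-1}}(\mu)=-\lambda_c^{f}(\mu)$; hence $\lambda_c^{f}(\mu)\geq 0$ gives $\lambda_c^{f^{-1}}(\mu)\leq 0$, and~(a) applied to $f^{-1}$ shows $S$ is $u$-saturated for $f^{-1}$, i.e.\ $s$-saturated for $f$. The final assertion is then immediate: if $\lambda_c(\mu)=0$ both hypotheses hold, so $S$ is simultaneously $u$- and $s$-saturated. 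The main obstacle I anticipate is the passage from the almost-everywhere identity of conditionals to a statement about the closed topological support: the two delicate points are justifying that a dense, relatively closed subset of a plaque is the whole plaque (which relies on $\xi(z)$ being the right connected piece of $\mathcal{F}^u(z)$) and invoking continuity of $\mathcal{F}^u$ carefully enough to extend leafwise saturation from a full-measure set to all of $S$.
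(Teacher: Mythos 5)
Your proof is correct and follows essentially the same route as the paper: both rest on Theorem~\ref{equiv} together with the observation that $m^u_x$, being the pullback of the normalized Lebesgue length on $\xi_c(x_c)$, has full support on the plaque $\xi(x)$, so that $\xi(x)\subset\supp(\mu)$ for $\mu$-a.e.\ $x$, with part (b) obtained by passing to $f^{-1}$. The paper stops at this plaque-level statement and declares it sufficient; you additionally spell out the promotion to full $u$-saturation (the increasing property of $\xi$ and uniform expansion to exhaust whole leaves, then density of a full-measure set in $\supp(\mu)$ plus continuity of $\mathcal{F}^u$), which is precisely the part the paper leaves to the reader.
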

\begin{proof}
It suffices to show that $\xi(x) \subset \supp \mu$ for $\mu$-almost every $x$. By the  Theorem  \ref{equiv}, $\left(\pi_c\right)_*\left(\mu_x^u\right)=\eta_{x_c}^u$ for every $x$. Note that $\eta_{x_c}^u$ is supported on the whole $\xi_{c}\left(x_c\right)$, since it corresponds  to the normalized Lebesgue measure on $\xi_{c}\left(x_c\right)$. Thus $\operatorname{supp} \mu_x^u=\xi(x)$. Moreover, for $\mu$-almost every point $x, \mu_x^u$-almost every point is a regular point of $\mu$, hence $\xi(x)=\operatorname{supp}\left(\mu_x^u\right) \subset \operatorname{supp} \mu$, for $\mu$-almost every point.    
\end{proof}

\begin{proposition}\label{propositmed}
Given $x \in M$, then any weak limit of the sequence of probability measures
$$
\mu_n:=\frac{1}{n} \sum_{j=0}^{n-1}\left(f^j\right)_* m_x^u
$$
is a measure of maximal entropy.   
\end{proposition}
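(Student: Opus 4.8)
The plan is to produce the invariant measure by the Krylov--Bogolyubov averaging that is built into the definition of $\mu_n$, then to transport the entire problem to the Anosov factor $f_c$ through $\pi_c$, where the decisive input is the classical equidistribution of unstable leaves, and finally to combine the factor inequality for entropy with the identity $h_{\text{top}}(f)=h_{\text{top}}(f_c)$ recorded in this section. First I would observe that each $m_x^u=(\pi_c)^{*}\eta_{x_c}^u$ is a Borel probability measure on the compact space $M$, being the pullback of the probability $\eta_{x_c}^u$ under the homeomorphism $\pi_c|_{\xi(x)}\colon\xi(x)\to\xi_c(x_c)$; hence each $\mu_n$ is a probability measure. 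By weak$^{*}$ compactness, $(\mu_n)$ has accumulation points; let $\mu=\lim_k\mu_{n_k}$ be one of them. The telescoping identity
$$
f_*\mu_n-\mu_n=\frac{1}{n}\bigl((f^{n})_*m_x^u-m_x^u\bigr),
$$
whose right-hand side tends to $0$ in the weak$^{*}$ topology (the difference of two probability measures divided by $n$), shows that $\mu$ is $f$-invariant.

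Next I would project everything under $\pi_c$. Since $\pi_c\circ f=f_c\circ\pi_c$ and $\pi_c|_{\xi(x)}$ is a homeomorphism onto $\xi_c(x_c)$, we have $(\pi_c)_*m_x^u=\eta_{x_c}^u$, and therefore
$$
(\pi_c)_*\mu_n=\frac{1}{n}\sum_{j=0}^{n-1}(f_c^{j})_*\eta_{x_c}^u .
$$
As recalled in the proof of Lemma \ref{lemaholonomia}, in the linear Anosov model for $f_c$ the conditional $\eta_{x_c}^u$ is the normalized Lebesgue length along the local unstable plaque $\xi_c(x_c)$. The crux of the argument is the claim that these Cesàro averages converge to the unique measure of maximal entropy $\eta$ of $f_c$: this is the equidistribution of expanding unstable leaves for a topologically transitive Anosov map of $\mathbb{T}^2$, the images $(f_c^{j})_*\eta_{x_c}^u$ spreading along longer and longer unstable segments and converging to $\eta$. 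Passing to the limit along $n_k$ and using weak$^{*}$ continuity of $(\pi_c)_*$ gives $(\pi_c)_*\mu=\eta$.

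Finally I would conclude by the factor inequality together with the variational principle. Because $\pi_c$ is a topological factor from $(M,f)$ onto $(M_c,f_c)$ and $(\pi_c)_*\mu=\eta$, entropy does not increase under the factor, so
$$
h_\mu(f)\ \geq\ h_{\eta}(f_c)\ =\ h_{\text{top}}(f_c)\ =\ h_{\text{top}}(f),
$$
where the middle equality holds since $\eta$ is the measure of maximal entropy of $f_c$, and the last equality is the coincidence $h_{\text{top}}(f)=h_{\text{top}}(f_c)$. As the variational principle yields $h_\mu(f)\leq h_{\text{top}}(f)$, all these quantities agree and $\mu$ is a measure of maximal entropy.

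I expect the main obstacle to be the identification $(\pi_c)_*\mu=\eta$, i.e. justifying rigorously that the pushed-forward unstable conditionals equidistribute toward $\eta$; this is precisely where the uniform hyperbolicity of $f_c$ and the explicit Lebesgue description of $\eta_{x_c}^u$ enter. An alternative, and possibly cleaner, route bypasses the equidistribution lemma: using Lemma \ref{lemaprop} one checks directly that any accumulation point $\mu$ has its $\xi$-conditionals equal to $m_x^u$, so that $\mu$ is a Gibbs $u$-state; its projection is then an $f_c$-invariant measure whose unstable conditionals are (normalized) Lebesgue, and such a measure is forced to be $\eta$.
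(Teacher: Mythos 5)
Your proof is correct, but it follows a genuinely different route from the paper's. The paper reduces the statement, via Theorem \ref{equiv}, to identifying the disintegration of any weak limit along the increasing partition $\xi$: using Lemma \ref{lemaprop} it shows that each $\mu_n$, restricted to a Markov box $B_i$, disintegrates exactly into the family $\{m^u_z\}_{z\in B_i}$, and then passes to the limit using the continuity of $z\mapsto m^u_z$ furnished by the holonomy invariance of Lemma \ref{lemaholonomia}; maximality then comes from the Tahzibi--Yang rigidity theorem. You instead push everything down to the Anosov factor, prove $(\pi_c)_*\mu=\eta$ by equidistribution of long unstable segments in the linear model (where $\eta$ is Lebesgue area and $\eta^u_{x_c}$ is normalized length), and conclude from the factor-entropy inequality $h_\mu(f)\geq h_{(\pi_c)_*\mu}(f_c)$ together with $h_{\text{top}}(f)=h_{\text{top}}(f_c)$ and the variational principle. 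Your route is more elementary, avoiding disintegration rigidity entirely in favor of soft entropy facts plus a Weyl-type equidistribution statement (unique ergodicity of the irrational linear flow in the unstable direction), which is indeed classical; one point to state carefully is that the equidistribution must be formulated for the conditionals $\eta^u_{x_c}$ of the maximal measure rather than for Riemannian arclength --- for a nonlinear Anosov map arclength equidistributes to the SRB measure, not the MME --- and it is precisely the passage to the linear model, where the two coincide, that makes the limit equal to $\eta$; you do flag this. What the paper's approach buys in exchange is strictly more information: it identifies the $\xi$-conditionals of the limit as $\{m^u_x\}$, i.e.\ exhibits the limit as a Gibbs $u$-state, and this stronger conclusion (not mere maximality) is what is exploited downstream, for instance in Corollary \ref{saturado} and Corollary \ref{existencia} to obtain $u$-saturation of supports. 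The alternative you sketch in your last paragraph, checking the conditionals directly via Lemma \ref{lemaprop}, is essentially the paper's argument.
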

\begin{proof}
By Theorem \ref{equiv}, it is enough to prove that if $\mu$ is a weak limit of $(\mu_n)_{n\in \mathbb{N}}$, then the decomposition of $\mu$ along $\xi$ is given by $\{m^{u}_x\}_{x\in M}$.  Consider $\mu_n$ and $j\in \{0,1,\ldots,n\}$, since $\xi$ is an increasing measure, we have that if $f^j(\xi(x))\cap B_i \neq \emptyset$, then $G_i=f^j(\xi(x))\cap B_i$ is the union of (complete) elements of $\xi$ inside $B_i$. Then  $G_j=\sqcup^{n_j}_{\ell=1}\xi(z_\ell^{j})$ for some $z_1^{j},\ldots , z_\ell^{j}\in B_i$. Using the Lemma \ref{lemaprop} we obtain
\begin{eqnarray*}
    (\left(f^j\right)_* m_x^u)|_{B_i}=  (\left(f^j\right)_* m_x^u)|_{G_j}&=&\sum_{\ell=1}^{n_j} (\left(f^j\right)_* m_x^u)|_{\xi(x_{\ell})}\\
    &=&\sum_{\ell=1}^{n_j} f^j_* \big(m_x^u|_{f^{-j}(\xi(x_{\ell}))}\big)\\
     &=& \sum_{\ell=1}^{n_j} m_{x}^u\left(f^{-j}(\xi(x_{\ell}))\right) \cdot m_{x_\ell}^u
\end{eqnarray*}
Now we analyze the disintegration of $\frac{1}{\mu_n(B_i)} \mu_n$ (normalization of $\mu_n$ on $B_i$ ) along the plaque partition inside $B_i$. We identify the quotient measure with respect to the plaques partition:
\begin{eqnarray*}
     \frac{1}{n \mu_n(B_i)} \sum_j  (\left(f^j\right)_* m_x^u)|_{B_i}&=&\frac{1}{n \mu_n(B_i)}\left(\sum_j \sum_{\ell=1}^{n_j} m_{x}^u\left(f^{-j}(\xi(x_{\ell}))\right) \cdot m_{x_\ell}^u \right) \\
\end{eqnarray*}
So, it is enough to consider the quotient measure on the space of plaques by
$$
\sum_j \sum_\ell\dfrac{ m_{x}^u\left(f^{-j}(\xi(x_{\ell}))\right)\delta_{j,\ell} }{n \mu_n(B_i)}
$$
where $\delta_{j, \ell}$ represents the Dirac measure on the element of the quotient space corresponding to $\xi(z^{j}_{\ell})$. Therefore, the disintegration of $\frac{1}{\mu_n(B_i)} \mu_n$ along $\left.\xi\right|_{B_i}$ is given by $\{m^{u}_{z}\}_{z\in B_i}$. By Lemma \ref{lemaholonomia}, the measures $\left\lbrace m^{u}_{z}\right\rbrace _{z\in B_i}$ depends continuously, then we have that every accumulation point for this sequence is a sum of measures $m^{u}_x$ on each $B_i$. Without loss of generality, suppose that $\mu_n \rightarrow \mu$. If $\varphi\in C^0(B_i)$, then
$$
\begin{aligned}
\int_{B_i} \varphi d \mu=\lim _{n \rightarrow \infty} \int_{B_i}\varphi d \mu_n & =\lim _{n \rightarrow \infty} \int_{B_i} \int_{\xi(z)}  \varphi d m_z^u d \mu_n \\
& =\int_{B_i} \int_{\xi(z)}  \varphi d m_z^u d \mu
\end{aligned}
$$

The last equality comes from the continuity of $m^{u}_x$ given by Lemma \ref{lemaholonomia}. The uniqueness of disintegration implies that conditional measures of $\mu$ are given by $\{m^{u}_x\}_{x\in M}$. This completes the proof of the proposition.
\end{proof}

\begin{corollary}\label{existencia}
    If $\Lambda$ is a minimal $u$-saturated set then there is a measure of maximal entropy $\mu$ with $\supp(\mu)=\Lambda$.
\end{corollary}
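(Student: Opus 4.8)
The plan is to realize $\Lambda$ as the support of a measure produced by the averaging construction of Proposition \ref{propositmed}, seeded at a point of $\Lambda$. First I would fix any $x\in\Lambda$. Since $\Lambda$ is $u$-saturated and contains $x$, it contains the whole unstable leaf $\mathcal{F}^u(x)$, and in particular the plaque $\xi(x)$; as $\supp(m^u_x)=\xi(x)$ (recall $m^u_x=(\pi_c)^*\eta^u_{x_c}$ is carried by the plaque $\xi(x)$), we get $\supp(m^u_x)\subseteq\Lambda$. Because $\Lambda$ is closed and $f$-invariant, every pushforward $(f^j)_* m^u_x$ is supported in $\Lambda$, hence so is each Ces\`aro average $\mu_n=\frac1n\sum_{j=0}^{n-1}(f^j)_* m^u_x$. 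Passing to a weak-$*$ convergent subsequence $\mu_{n_k}\to\mu$ (possible since $M$ is compact and the $\mu_n$ are probability measures), the limit $\mu$ is again a probability measure with $\supp(\mu)\subseteq\Lambda$.

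Next I would identify $\mu$. By Proposition \ref{propositmed} the measure $\mu$ is a measure of maximal entropy, and the proof of that proposition shows moreover that the disintegration of $\mu$ along the increasing partition $\xi$ is exactly $\{m^u_x\}_{x\in M}$. This is the only structural input I need about $\mu$; in particular I do \emph{not} require any a priori control of the sign of $\lambda_c(\mu)$, which is what lets the argument apply to an arbitrary minimal $u$-saturated set.

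It then remains to prove the reverse inclusion $\Lambda\subseteq\supp(\mu)$, which I would obtain by showing that $\supp(\mu)$ is a nonempty closed $f$-invariant $u$-saturated subset of $\Lambda$ and invoking minimality of $\Lambda$. Nonemptiness, closedness and $f$-invariance are automatic for the support of an invariant probability measure. For $u$-saturation I would repeat verbatim the reasoning of Corollary \ref{saturado}: since the $\xi$-conditionals of $\mu$ are the $m^u_x$ and $\supp(m^u_x)=\xi(x)$ is the full plaque (because $\eta^u_{x_c}$ is the normalized Lebesgue length on $\xi_c(x_c)$), for $\mu$-almost every $x$ the set $m^u_x$-almost every point lies in $\supp(\mu)$, and these points are dense in $\xi(x)$, so $\xi(x)\subseteq\supp(\mu)$. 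Continuity of the unstable plaques (Lemma \ref{lemaholonomia}) then upgrades this to $\xi(y)\subseteq\supp(\mu)$ for every $y\in\supp(\mu)$, and $f$-invariance turns local plaques into entire leaves: any $z\in\mathcal{F}^u(y)$ satisfies $f^{-n}(z)\in\xi(f^{-n}(y))\subseteq\supp(\mu)$ for $n$ large (since $f^{-1}$ contracts $\mathcal{F}^u$), whence $z\in\supp(\mu)$. Thus $\supp(\mu)$ is $u$-saturated, and minimality of $\Lambda$ forces $\supp(\mu)=\Lambda$.

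The main obstacle is precisely this last upgrade from almost-everywhere local saturation to genuine global $u$-saturation of $\supp(\mu)$; it is the delicate point already present in Corollary \ref{saturado}, resting on the continuous dependence of the measures $m^u_x$ and on the uniform expansion along $\mathcal{F}^u$. Everything else is bookkeeping around Proposition \ref{propositmed} and the hypotheses that $\Lambda$ be closed, $f$-invariant and $u$-saturated.
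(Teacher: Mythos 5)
Your proposal is correct and follows essentially the same route as the paper: seed the Ces\`aro averages of Proposition \ref{propositmed} at a point $x\in\Lambda$, observe that the weak-$*$ limit is a measure of maximal entropy supported inside $\Lambda$ (since each $(f^j)_*m^u_x$ is carried by the closed invariant set $\Lambda$), establish $u$-saturation of the support, and conclude by minimality. If anything, you are slightly more careful than the paper, which invokes Corollary \ref{saturado} directly (whose statement nominally requires $\lambda_c(\mu)\leq 0$), whereas you correctly point out that the needed $u$-saturation follows from the disintegration property established inside the proof of Proposition \ref{propositmed}, with no hypothesis on the center exponent.
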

\begin{proof}
  Let $x\in  \Lambda$, and $\mu$ be a limit of the sequence of probability measures $$\mu_n\doteq\frac{1}{n} \sum_{j=0}^{n-1}\left(f^j\right)_* m_x^u$$
  as stated in Proposition \ref{propositmed}. By  Corollary \ref{saturado} we have that  $\supp(\mu)$ is $u$-saturated set. Moreover, $\supp(\mu)\subset \Lambda$. Indeed, given $y\in M\backslash \Lambda$ then there exists $\epsilon>0$ such that $B(y,\epsilon) \cap \Lambda= \emptyset$. We know that $f^{n}(\xi(x))\subset \Lambda$ for every $n\in \mathbb{N}$, we have that  $f^{n}(\xi(x))\cap B(y,\epsilon)=\emptyset$ for all $n\in \mathbb{N}$. This implies that $\mu_n(B(y,\epsilon))=0$ for all $n\in \mathbb{N}$, thus $\mu(B(y,\epsilon))=0$. So, $y\notin \supp(\mu)$ for every $y\in M\backslash \Lambda$, i.e. $\supp(\mu)\subset \Lambda$. Finally, as $\supp(\mu)$ is closed, $u$-saturated  and $f$-invariant set, we concluded that $\supp(\mu)=\Lambda$.
\end{proof}

 \begin{proposition} Let $\mu$, $\nu$ be an ergodic measure of $u$-maximal entropy with  $\lambda_c(\mu)<0$ and   $\lambda_c(\nu)\leq 0$. Then $$\mu(\supp(\mu) \cap \supp(\nu))=0$$
\end{proposition}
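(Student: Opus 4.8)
The plan is to argue by contradiction, first using ergodicity to reduce the statement to an inclusion of supports, and then to contradict that inclusion by exploiting the genuine hyperbolicity forced by $\lambda_c(\mu)<0$ together with a Hopf-type argument that leans on the fact that \emph{both} measures have the same unstable conditionals $m^u$. Throughout I may assume $\mu\neq\nu$, since otherwise the claimed identity is not what is intended.

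First I would set up the reduction. By Corollary \ref{saturado}(a), as $\lambda_c(\mu)\le 0$ and $\lambda_c(\nu)\le 0$, both $\supp(\mu)$ and $\supp(\nu)$ are $u$-saturated; being supports of $f$-invariant measures they are also closed and $f$-invariant. Hence $A:=\supp(\mu)\cap\supp(\nu)$ is closed and $f$-invariant, and since $\mu$ is ergodic we have $\mu(A)\in\{0,1\}$. It therefore suffices to rule out $\mu(A)=1$. Assuming $\mu(A)=1$, minimality of the support gives $\supp(\mu)\subseteq A\subseteq\supp(\nu)$, and this inclusion is what I will contradict.

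Next I would use the hyperbolicity of $\mu$. Because $\lambda_c(\mu)<0$, with $E^s$ uniformly contracted and $E^u$ uniformly expanded, $\mu$ is a hyperbolic measure whose contracting bundle is the two-dimensional $E^{cs}$ and whose expanding bundle is the one-dimensional $E^u$. Pesin theory then provides a block $\Lambda$ with $\mu(\Lambda)>0$ on which the local center-stable plaques $\mathcal{F}^{cs}_{loc}(x)$ have uniform size and are uniformly contracted in forward time; consequently every point of $\mathcal{F}^{cs}_{loc}(x)$ lies in the stable set of the orbit of $x$ and, for $\mu$-generic $x$, shares the forward Birkhoff limit of $x$, i.e.\ it equidistributes to $\mu$. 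By Theorem \ref{equiv} the conditionals of $\mu$ along the unstable partition $\xi$ are the measures $m^u_x$, so for $\mu$-almost every plaque, $m^u_x$-almost every point of $\mathcal{F}^u_{loc}(x)$ is $\mu$-generic. Now fix a density point $x_0\in\Lambda\cap\supp(\mu)$ whose unstable plaque has this genericity property, and form a small local product box $B$ around $x_0$ using the transverse foliations $\mathcal{F}^{cs}$ and $\mathcal{F}^u$. Since $x_0\in\supp(\mu)\subseteq\supp(\nu)$ we get $\nu(B)>0$. The key point is that, again by Theorem \ref{equiv}, $\nu$ \emph{also} has unstable conditionals $m^u_x$, and by Lemma \ref{lemaholonomia} the center-stable holonomy carries the family $\{m^u\}$ to itself; hence the stable-holonomy projection of $\nu|_B$ onto the plaque $\mathcal{F}^u_{loc}(x_0)$ is absolutely continuous with respect to $m^u_{x_0}$. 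Therefore, for $\nu$-almost every $z\in B$, the holonomy image $x'=\mathcal{F}^{cs}_{loc}(z)\cap\mathcal{F}^u_{loc}(x_0)$ is $m^u_{x_0}$-typical, hence $\mu$-generic, and since $z\in\mathcal{F}^{cs}_{loc}(x')$ lies in the stable set of $x'$, the point $z$ equidistributes to $\mu$. But Birkhoff's theorem says $\nu$-almost every $z$ equidistributes to $\nu$, forcing $\mu=\nu$, a contradiction. Thus $\mu(A)=0$.

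The hard part will be the last step: making the Hopf-type argument rigorous against the \emph{non-uniform} Pesin center-stable manifolds. Concretely, one must choose the product box $B$ adapted to the block $\Lambda$ so that, for $\nu$-almost every $z\in B$, the holonomy image $x'$ genuinely lands inside the uniform-size stable manifold of an $m^u$-typical point of the reference plaque, and one must control the absolute continuity of the stable-holonomy push-forward of $\nu$'s unstable conditionals onto that plaque — this is exactly where Lemma \ref{lemaholonomia} enters, so its almost-everywhere character has to be matched carefully against the positive-measure block $\Lambda$. I would emphasize that $\lambda_c(\nu)\le 0$ (possibly $=0$) is never used beyond Theorem \ref{equiv}: the argument only invokes the hyperbolicity of $\mu$ and the common unstable conditionals, so the potential failure of $\nu$ to be hyperbolic is not an obstruction.
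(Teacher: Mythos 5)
Your strategy is the same as the paper's: reduce by ergodicity to $\supp(\mu)\subseteq\supp(\nu)$, use Theorem \ref{equiv} to give both measures the same unstable conditionals $\{m^u_x\}$, take a Pesin block for the hyperbolic measure $\mu$, and run a Hopf argument through the $cs$-holonomy (Lemma \ref{lemaholonomia}) to force the ergodic basins $\mathcal{B}(\mu)$ and $\mathcal{B}(\nu)$ to intersect, whence $\mu=\nu$, contradicting $\mu\neq\nu$. The only structural difference is the direction of the holonomy: the paper pushes a positive-measure set $L\subset\xi(x_0)\cap\mathcal{B}(\mu)$ of Pesin-block points forward into the unstable plaque of a $\nu$-typical point $y_0$ near $x_0$, while you pull $\nu$-typical points of a product box $B$ back to the reference plaque. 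That difference is cosmetic.

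However, as written your central step does not close, and it is exactly the step you defer to ``the hard part.'' Two issues. First, the claim that for $\nu$-a.e.\ $z\in B$ the holonomy image $x'$ is $\mu$-generic \emph{and} $z$ lies in the stable set of $x'$ is unjustified: genericity of $x'$ does not give $x'$ a Pesin stable manifold of size comparable to the box; only points of the block $\Lambda$ have that uniform size, and $\Lambda\cap\xi(x_0)$ has positive, not full, $m^u_{x_0}$-measure. So the conclusion can only be claimed for those $z$ whose holonomy image lands in $L:=\Lambda\cap\mathcal{B}(\mu)\cap\xi(x_0)$. Second, to see that this set of $z$ has positive $\nu$-measure, the absolute continuity you invoke points the wrong way: knowing that the projection $P$ satisfies $P_*(\nu|_B)\ll m^u_{x_0}$ only lets you pull back \emph{null} sets, not positive-measure sets. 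What you need --- and what Theorem \ref{equiv} together with Lemma \ref{lemaholonomia} actually provide --- is that the $cs$-holonomy carries $\nu$'s unstable conditionals exactly onto $m^u_{x_0}$, so that $\nu(P^{-1}(L))$ is proportional to $m^u_{x_0}(L)>0$. Once this is in place, a single point $z\in P^{-1}(L)\cap\mathcal{B}(\nu)$ finishes the proof: it is forward-asymptotic to a $\mu$-generic block point, hence equidistributes to $\mu$, while being $\nu$-generic, so $\mu=\nu$. This repaired version is precisely the paper's argument run in the opposite holonomy direction, so the gap is real but local, and it is fixable with the tools you already named.
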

\begin{proof}
 Let $K=\supp(\mu) \cap \supp(\nu)$. Suposse that $\mu(K)>0$ then $K\neq \emptyset$. Since   $\supp(\mu)$ and   $\supp(\nu)$ are invariant compact sets we have that $K$ is invariant  compact set. Since $\mu(K)>0$ and $K$ is invariant  compact set,  the ergodicity of $\mu$ implies that 
 $$K=\supp(\mu).$$  
 
 As $\mu$ and $\nu$ are ergodic measures of maximal $u$-entropy  the  Theorem \ref{equiv} implies that their conditional measures along the increasing partition $\xi$ for $\mu$ and $\nu$ are both given by  the system $\left\{m_{x}^{u}\right\}_{x \in M}$. Let 
 \begin{eqnarray*}
     \mathcal{B}(\mu)&:=&\{x \in M:\frac{1}{n} \sum_{k=0}^{n-1} \delta_{f^{k} x} \rightarrow \mu\}\; \text{and}\\
     \mathcal{B}(\nu)&:=&\{x \in M:\frac{1}{n} \sum_{k=0}^{n-1} \delta_{f^{k} x} \rightarrow \nu\}
 \end{eqnarray*}
 be the ergodic basin of $\mu$ and $\nu$, respectively. Since $\mu$ and $\nu$ are  ergodic measures  we have that $\mu\left(  \mathcal{B}(\mu))\right)=1$ and $\nu\left(  \mathcal{B}(\nu))\right)=1$. Therefore    $m_{x}^{u}\left(K \backslash  \mathcal{B}(\mu)\right)=0$ for $\mu-$ a.e $x\in K$ and  $m_{y}^{u}\left(\supp(\nu)\backslash  \mathcal{B}(\nu)\right)=0$ for $\nu$- a.e $x\in \supp(\nu)$. \\
 
 As the center Lyapunov exponent of $\mu$  is negative, for $\mu$ -a.e. $x\in  \mathcal{B}(\mu)$ and  $ m_{x}^{u}(K\backslash\left. \mathcal{B}(\mu)\right)=0$ for $\mu$-a.e. $x \in  \mathcal{B}(\mu)$   there is $x_0\in K\cap  \mathcal{B}(\mu)$ and  $L \subset\xi(x_0) \cap  \mathcal{B}(\mu)$ with $m_{x}^{u}(L)>0$ and large size of the Pesin local stable manifolds: $\inf_{z\in L}\text{diam}^{cs}(\mathcal{W}_{\text {loc }}^{s}(z))>\delta$,  for some $\delta>0$. Let $i\in {1,\ldots,k}$ such that $x_0\in B_i$ for $B_i$ defined at the beginning of the section.  As $L \subset \xi(x_0)$, there exists  $\epsilon>0$ such that for all $y\in  \mathcal{B}(x_0,\epsilon)\subset B_i$ there is a $(cs,\delta)$-holonomy $\mathcal{H}_{x_0, y}^{c s}:L \rightarrow \mathcal{H}_{x_0, y}^{c s}(L) \subset \xi(y)$.\\

As  $K\subset\supp(\nu)$, there exists  $y_0 \in  \mathcal{B}(\nu)\cap B(x_0,\epsilon)$ with $m_{y_0}^{u}\left(\supp(\nu)\backslash B(\nu)\right)=0$. Since $y_0 \in B(x_0,\epsilon)$, then there is a local $cs$-holonomy $\mathcal{H}_{x_0, y_0}^{c s}: L \rightarrow \xi(y_0)$. This holonomy is absolutely continuous from $\left(L, m_{x}^{u}\right)$ to $\left(\xi(y_0), m_{y_0}^{u}\right)$, hence $m_{y}^{u}(\mathcal{H}_{x_0, y_0}^{c s}(L))>0$.
 
\begin{figure}[H]
\centering
 \includegraphics[scale=0.23]{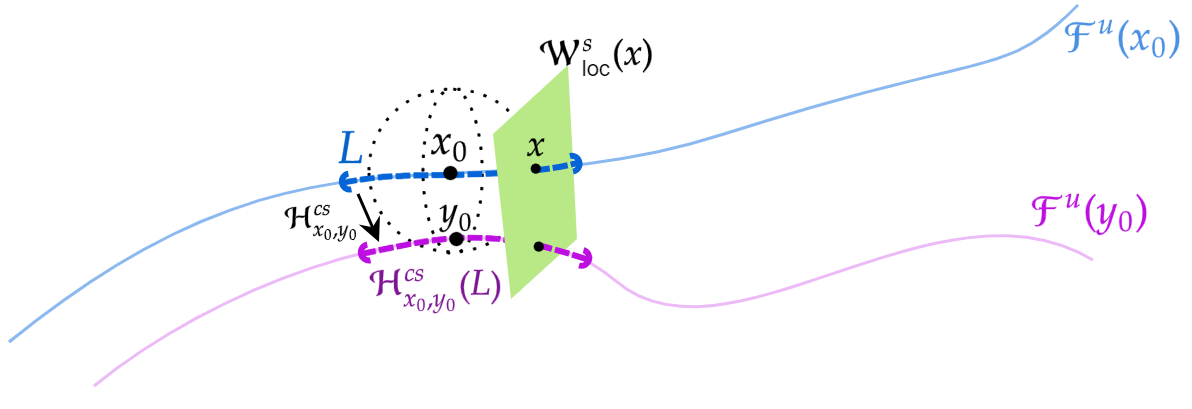}
\caption{$cs$-holonomy $\mathcal{H}_{x_0, y_0}^{c s}$}
\end{figure}
By the choice of $L, \mathcal{H}_{x_0, y_0}^{c s}(x)$ belongs to the Pesin stable manifold of $x$. Since the ergodic basin is saturated by stable manifolds, $\mathcal{H}_{x_0, y_0}^{c s}(L) \subset  \mathcal{B}(\mu)$ and therefore $m_{y_0}^{u}\left( \mathcal{B}(\mu)\right)>0$. As $m_{y_0}^{u}\left(\supp(\nu) \backslash  \mathcal{B}(\nu))\right)=0$, we conclude that $ \mathcal{B}(\nu) \cap  \mathcal{B}(\mu)\neq \emptyset$ and consequently $\mu=\nu$. This  complete the proof of the proposition.
\end{proof}
\subsection{Twin Measures}

In this section, we present a theorem, whose proof  can be found in \cite{Ali}. For every hyperbolic measure, another measure called a twin measure can be constructed, but with the opposite sing in the central Lyaponuv exponent . This allows us to obtain new measures hyperbolic of maximal entropy, that initially appeared in \cite{hertz2012maximizing}. A similar argument can also be found in \cite{diaz2019structure}.

 Let us assume that center foliation is one-dimensional with orientable compact leaves and suppose that $f$ preserves this orientation.

\begin{proposition} [\cite{hertz2012maximizing},  \cite{Ali}] \label{Twin measure} 
If $\nu$ is an ergodic measure of maximal entropy for $f \in \phc^2(M)$ with $\lambda_c(\nu) < 0$, then there is another $f$-invariant probability measure $\nu^*$ which is isomorphic to $\nu$ and with exponent $\lambda_c\left(\nu^*\right) \geq 0$. Moreover, there is a measurable set $Z \subset M$ with $\nu(Z)=1$ such that the restriction the next is map is an isomorphism:
$$
\beta: Z \rightarrow \beta(Z), x \mapsto \sup \mathcal{W}_s^c(x, f) \text { and } \nu^*=\beta_* \nu,
$$
where $\mathcal{W}_s^c(x, f):=\left\{y \in \mathcal{F}^c(x): \lim \sup _{n \rightarrow \infty} \frac{1}{n} \log d\left(f^n x, f^n y\right)<0\right\}$ and $\sup \mathcal{W}^c(x)$ is the extreme point of $\mathcal{W}^c(x)$ in the positive direction.
\end{proposition}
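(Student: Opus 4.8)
The plan is to realize $\beta$ as the map sending a $\nu$-generic point to the positive endpoint of its center-stable Pesin arc, and then to verify in turn that $\beta$ is equivariant, that it is a measurable isomorphism onto its image, and finally that the resulting twin measure has non-negative center exponent. The construction rests on one-dimensional Pesin theory carried out inside the center circles.

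First I would set up $\beta$. Since $\lambda_c(\nu)<0$ and $\nu$ is ergodic, for $\nu$-a.e.\ $x$ the Oseledets splitting is defined along the orbit and the center is contracted in the Lyapunov sense, so Pesin's stable manifold theorem applied inside the circle $\mathcal{F}^c(x)$ gives that $\mathcal{W}_s^c(x,f)$ is an embedded arc containing $x$ in its interior. This arc is a \emph{proper} sub-arc of the circle: compact center leaves have length bounded below, so the whole circle cannot lie in the stable set (otherwise $\diam(f^n(\mathcal{F}^c(x)))\to 0$, contradicting that $f^n$ maps it onto the full circle $\mathcal{F}^c(f^n x)$). Hence $\sup\mathcal{W}_s^c(x)$ is a genuine endpoint, distinct from $x$, and $\beta(x):=\sup\mathcal{W}_s^c(x)$ is defined on a full-measure set $Z$.

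Next, equivariance and the isomorphism. Because stable sets map to stable sets, $f(\mathcal{W}_s^c(x,f))=\mathcal{W}_s^c(f(x),f)$, and because $f$ preserves the orientation of the one-dimensional orientable center it sends the positive endpoint of an arc to the positive endpoint of its image; thus $\beta(f(x))=f(\beta(x))$ on an $f$-invariant full-measure subset of $Z$, and therefore $\nu^*:=\beta_*\nu$ is $f$-invariant. For injectivity I would argue that if $\beta(x_1)=\beta(x_2)=p$ then $x_1,x_2$ lie in the same center leaf (distinct leaves yield endpoints in distinct leaves), their stable arcs share the endpoint $p$ and hence overlap near $p$, forcing $x_1,x_2$ into a common center-stable class. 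Since $\lambda_c(\nu)\neq 0$, the disintegration of $\nu$ along center leaves is atomic (cf.\ \cite{hertz2012maximizing}), and distinct atoms of a common leaf lie in distinct center-stable arcs; hence $\beta$ separates $\nu$-generic points and $\beta:Z\to\beta(Z)$ is a bimeasurable bijection. Consequently $(f,\nu)\cong(f,\nu^*)$, and entropy being an isomorphism invariant, $\nu^*$ is again a measure of maximal entropy.

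Finally, the sign of the center exponent, which I expect to be the main obstacle. As $\nu^*$ is ergodic it has a well-defined $\lambda_c(\nu^*)$; suppose for contradiction that $\lambda_c(\nu^*)<0$. Then for $\nu^*$-a.e.\ $p=\beta(x)$ Pesin theory provides a nontrivial stable arc $\mathcal{W}_s^c(p,f)$ around $p$ inside $\mathcal{F}^c(x)=\mathcal{F}^c(p)$. But $p=\sup\mathcal{W}_s^c(x)$ is the boundary of the stable arc of $x$, so any point $y$ just to the left of $p$ lies in $\mathcal{W}_s^c(x)\cap\mathcal{W}_s^c(p)$, whence $d(f^n x,f^n p)\le d(f^n x,f^n y)+d(f^n y,f^n p)\to 0$ exponentially, i.e.\ $p\in\mathcal{W}_s^c(x)$, contradicting $p\notin\mathcal{W}_s^c(x)$. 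Therefore $\lambda_c(\nu^*)\ge 0$. The delicate points are the uniform lower bound on the sizes of the Pesin arcs over a positive-measure (Lusin) set, so that the overlap near $p$ is genuine rather than degenerate, and the precise meaning of ``extreme point in the positive direction'' when $f$ only permutes the center leaves; both are handled by restricting to an $f$-invariant set on which the Pesin estimates and the center orientation are uniform.
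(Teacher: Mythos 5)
First, a point of reference: the paper does not prove Proposition~\ref{Twin measure} at all --- it is imported from \cite{Ali}, with the construction credited to \cite{hertz2012maximizing} --- so your proposal can only be compared with the standard construction in those references. In outline you do follow it: define $\beta$ as the positive endpoint of the center-stable class, check equivariance, injectivity modulo a null set, and then the sign of the exponent of $\beta_*\nu$.

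There are, however, two genuine gaps. The first is your properness argument. You claim that if the whole circle $\mathcal{F}^c(x)$ lay in $\mathcal{W}_s^c(x,f)$ then $\diam\big(f^n(\mathcal{F}^c(x))\big)\to 0$. This does not follow: membership in $\mathcal{W}_s^c(x,f)$ is a pointwise, non-uniform condition, and pointwise exponential convergence of every point of a compact set does not imply uniform convergence. The set $f^n(\mathcal{F}^c(x))=\mathcal{F}^c(f^nx)$ is always a full circle of diameter bounded below, and there is no contradiction with pointwise convergence; what would be contradicted is \emph{uniform} contraction, which you have not established on the whole leaf (Pesin estimates are uniform only on Pesin blocks, and nothing forces the whole leaf into one block). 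Ruling out that an entire center circle forms a single exponential-stable class is exactly one of the delicate points of the construction, and it is handled in the references through the atomicity of the center disintegration combined with Poincar\'e recurrence, not by a diameter count. The second gap is the injectivity step: you assert that distinct atoms of the center disintegration lying in a common leaf belong to distinct center-stable classes, but this is not part of the atomicity result you cite and requires its own argument --- namely, if two distinct atoms were in the same stable class, the minimal distance between atoms in the leaf of $f^n(x)$ would tend to $0$ along the forward orbit, contradicting recurrence to the positive-measure set where this distance exceeds some $\delta_0>0$. (A minor further point: your final contradiction assumes $p=\beta(x)\notin\mathcal{W}_s^c(x,f)$; if the stable class were closed at its supremum, you should instead observe that $\mathcal{W}_s^c(p,f)=\mathcal{W}_s^c(x,f)$ would then contain points beyond $p$, again contradicting $p=\sup$.) With these repairs --- all of which rest on the same Pesin-block and recurrence machinery --- your outline becomes the argument of the cited sources.
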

\begin{figure}[H]
\centering
 \includegraphics[scale=0.6]{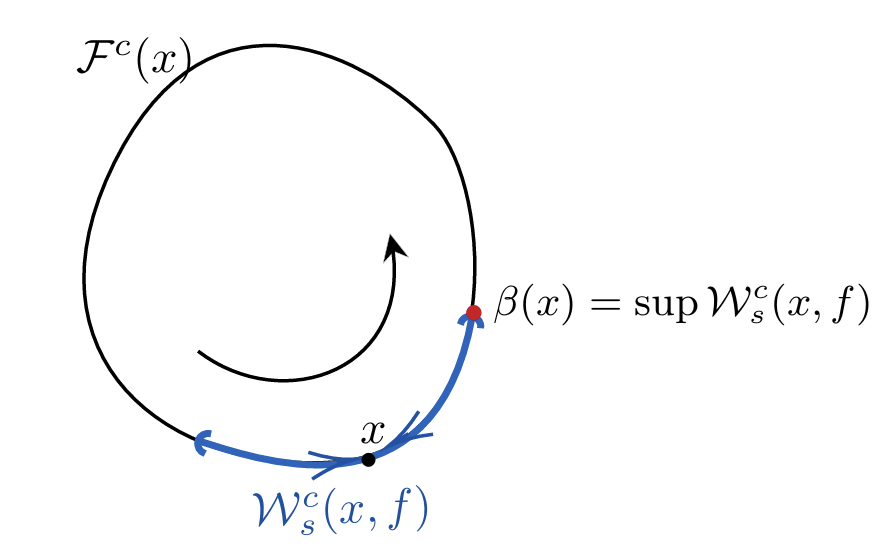}
\caption{Mapping correspondence $\beta$}
    %\label{The map $\beta$}
\end{figure}
The previous proposition uses in an essential way the one-dimensionality of center leaves. In the setting of Theorem \ref{dichotomy}, it was proved that the twin measure $\nu^*$ is also hyperbolic, i.e. $\lambda_c(\nu^*) > 0.$

\subsection{AB and AI-systems}
In order to prove the main results, we utilize certain properties of non-accessible partially hyperbolic systems, which belong to a class of Infra-AB systems that we will introduce in this section.

\subsubsection{AB-systems}
Suppose $A$ and $B$ are automorphisms of a compact nilmanifold $N$ such that $A$ is hyperbolic and $A B=B A$. Then, $A$ and $B$ define a diffeomorphism
$$
f_{A B}: M_{B} \rightarrow M_{B}, \quad(v, t) \mapsto(A v, t)
$$
on the manifold $M_{B}=N \times \mathbb{R} /(v, t) \sim(B v, t-1)$. Call $f_{A B}$ an $A B$-prototype. \\

 Two partially hyperbolic diffeomorphisms $f$ and $g$ are leaf conjugate if there are invariant foliations $W_{f}^{c}$ and $W_{g}^{c}$ tangent to $E_{f}^{c}$ and $E_{g}^{c}$ and a homeomorphism $h$ such that for every leaf  $L$ in $W_{f}^{c}, h(L)$ is a leaf of $W_{g}^{c}$ and $h(f(L))=g(h(L))$.
 
\begin{definition}
A partially hyperbolic system $f: M \rightarrow M$ is an $A B$-system if it preserves an orientation of the center bundle $E^{c}$ and is leaf conjugate to an AB-prototype.
\end{definition}\label{torus for AB}
A partially hyperbolic system has global product structure if it is dynamically coherent and, after lifting the foliations to the universal cover $\tilde{M}$, the following hold for all $x, y \in \tilde{M}$ :
\begin{itemize}
    \item[(1)] $W^u(x)$ and $W^{c s}(y)$ intersect exactly once,
 \item[(2)] $W^s(x)$ and $W^{c u}(y)$ intersect exactly once,
 \item[(3)] if $x \in W^{c s}(y)$, then $W^c(x)$ and $W^s(y)$ intersect exactly once, and
 \item[(4)] if $x \in W^{c u}(y)$, then $W^c(x)$ and $W^u(y)$ intersect exactly once.
\end{itemize}
\begin{theorem}[\cite{hammerlindl2017ergodic}]
 Every AB-system has global product structure.
\end{theorem}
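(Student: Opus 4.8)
The plan is to prove the statement in two stages: first verify global product structure directly for the algebraic model $f_{AB}$ by an explicit computation on the universal cover, and then transport it to an arbitrary $AB$-system through the leaf conjugacy. The point of splitting the argument this way is that the prototype is genuinely a product, so its global product structure is essentially a bookkeeping exercise, whereas all of the real difficulty is concentrated in the transfer step.

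For the prototype $f_{AB}$ on $M_B = N\times\mathbb{R}/\!\sim$, I would pass to the universal cover $\tilde M = \tilde N\times\mathbb{R}$, where $\tilde N$ is the simply connected nilpotent Lie group covering $N$ and the center direction is the $\mathbb{R}$-factor. The hyperbolic automorphism $A$ lifts to $\tilde A$, whose stable and unstable subgroups $G^s, G^u$ integrate to foliations $\mathcal{W}^s_A, \mathcal{W}^u_A$ of $\tilde N$ by cosets. The key algebraic input is that for an Anosov automorphism of a nilmanifold the product map $G^s\times G^u\to\tilde N$ is a diffeomorphism, so $\tilde N$ already enjoys global product structure for the pair $(\mathcal{W}^s_A,\mathcal{W}^u_A)$. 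On $\tilde M$ the invariant foliations of $f_{AB}$ are the products $W^\sigma=\mathcal{W}^\sigma_A\times\{t\}$ for $\sigma\in\{s,u\}$, $W^c=\{v\}\times\mathbb{R}$, and $W^{cs}=\mathcal{W}^s_A\times\mathbb{R}$, $W^{cu}=\mathcal{W}^u_A\times\mathbb{R}$. Each of the four intersection statements then follows by separating the $\tilde N$-coordinate from the $\mathbb{R}$-coordinate: for instance $W^u(x)=\mathcal{W}^u_A(v_x)\times\{t_x\}$ meets $W^{cs}(y)=\mathcal{W}^s_A(v_y)\times\mathbb{R}$ in exactly the point whose $\tilde N$-coordinate is the unique point of $\mathcal{W}^u_A(v_x)\cap\mathcal{W}^s_A(v_y)$ and whose $\mathbb{R}$-coordinate is $t_x$; conditions (2)--(4) are checked the same way, the last two using the induced product structure inside a single $cs$- or $cu$-leaf.

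Next I would transport the structure along the leaf conjugacy $h$ between $f$ and the prototype. Lifting $h$ to $\tilde h\colon \tilde M_f\to \tilde N\times\mathbb{R}$, it carries center leaves to center leaves and conjugates the induced dynamics on the center leaf spaces; since the prototype's leaf space is $\tilde N$ with its Anosov product structure, the leaf space of $\mathcal{F}^c_f$ inherits global product structure. Consequently the projections of $W^u(x)$ and $W^{cs}(y)$ to the leaf space meet in a unique center leaf $\ell$. It then remains to upgrade this into a unique honest intersection in $\tilde M_f$: $W^{cs}(y)$ contains all of $\ell$, so one must show that $W^u(x)$ crosses $\ell$ in exactly one point, and the analogous statement inside $cs$- and $cu$-leaves for conditions (3)--(4).

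The main obstacle is precisely this last upgrade, because leaf conjugacy is weak: it respects the center foliation and the dynamics on the leaf space, but a priori it gives no control over the $s,u,cs,cu$ foliations themselves. I would resolve it using the defining hypothesis that $f$ preserves an orientation of $E^c$. This makes the center leaves coherently oriented lines, which forces the unstable holonomy inside a $cu$-leaf (and the stable holonomy inside a $cs$-leaf) to be a \emph{monotone} homeomorphism along the center direction, so $W^u(x)$ meets each center leaf in at most one point. For existence of the intersection I would use that the lifted leaves are complete and properly embedded (dynamical coherence together with the product structure of the lifted center foliation), so that $W^u(x)$ projects \emph{onto} the full leaf-space unstable set of $[x]$; combined with the leaf-space global product structure this guarantees that $W^u(x)$ actually reaches $\ell$. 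Monotonicity and completeness together give the unique intersection, and running the same argument inside a single $cs$- or $cu$-leaf, with the center direction played against the stable (respectively unstable) direction, establishes the remaining two conditions.
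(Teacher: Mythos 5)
Note first that the paper contains no proof of this statement: it is imported verbatim from \cite{hammerlindl2017ergodic}, so there is nothing internal to compare with and your argument must stand on its own. Your first stage is sound: on the universal cover $\tilde N\times\mathbb{R}$ of $M_B$ the invariant foliations of the prototype really are products, and the fact that $G^s\times G^u\to\tilde N$ is a diffeomorphism for a hyperbolic automorphism of a simply connected nilpotent Lie group gives all four intersection properties by coordinate bookkeeping. The entire content of the theorem, however, lies in your second stage, and there the argument rests on three unjustified claims, each a genuine gap rather than a routine verification.

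(i) To say that the leaf space of $\tilde{\mathcal{F}}^c_f$ ``inherits'' global product structure you must know that $\tilde h$ carries the leaf-space projections of $W^{cs}_f$ and $W^{cu}_f$ exactly onto the stable and unstable foliations of $\tilde A$; a leaf conjugacy transports the dynamics on the leaf space but says nothing about where the $cs$- and $cu$-leaves of $f$ project, and identifying these projected sets with the linear foliations already requires global dynamical estimates. (ii) Preserving an orientation of $E^c$ does not make unstable holonomy inside a $cu$-leaf ``monotone along the center''; orientability is a soft condition and by itself does not prevent an unstable leaf from returning to the same center leaf. Making ``at most one intersection'' rigorous in dimension $3$ would need planarity of the lifted $cu$-leaves (Novikov--Palmeira) together with the bigon argument for transverse foliations of the plane, and for a general AB-system (where $N$ is an arbitrary compact nilmanifold, so $\dim E^u>1$ is allowed) even that route is unavailable. (iii) Most seriously, existence of the intersection is precisely the completeness problem: that $W^u(x)$ sweeps across \emph{every} center leaf of its $cu$-leaf does not follow from dynamical coherence plus proper embedding of leaves; it is a separate global property, and verifying it is the crux of the theorem. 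In \cite{hammerlindl2017ergodic} all three points are obtained from quantitative input that your proposal never invokes, namely quasi-isometry of the lifted invariant foliations in the universal cover, which comes from the structure of $\pi_1(M_B)$ and the hyperbolicity of $A$. Without some such input, the upgrade from the purely topological data (leaf conjugacy plus center orientation) to global product structure cannot be completed, so the proposal is incomplete at its central step.
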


 For a partially hyperbolic diffeomorphism with one-dimensional center, a $su$-leaf is a complete $C^{1}$ submanifold tangent to $E^{u} \oplus E^{s}$.

\begin{theorem}[\cite{hammerlindl2017ergodic}]\label{hamm}
Every non-accessible $A B$-system has a compact su-leaf.
\end{theorem}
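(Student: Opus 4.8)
The plan is to first extract an $su$-leaf from the failure of accessibility, and then to prove that this leaf must be compact by pushing it down to the center quotient $M_c$ and exploiting the global product structure guaranteed by the preceding theorem.

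\textbf{Step 1 (producing an $su$-leaf).} Since $f$ is non-accessible, its accessibility classes do not all coincide. These classes partition $M$, and by the regularity theory of accessibility classes (Rodr\'iguez Hertz--Rodr\'iguez Hertz--Ures) each one is either open or an injectively immersed $C^1$ submanifold tangent to $E^s\oplus E^u$. Because $M$ is connected, a partition into open sets would be trivial and would force accessibility; hence at least one accessibility class $S$ fails to be open, and therefore $S$ is a complete $C^1$ submanifold tangent to $E^s\oplus E^u$, i.e.\ an $su$-leaf. It then remains only to show that such an $S$ is compact.

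\textbf{Step 2 (projecting to $M_c$).} Recall that $\pi_c:M\to M_c\cong\mathbb T^2$ collapses center leaves and that $f_c$ is Anosov with minimal stable and unstable foliations $\mathcal W^s,\mathcal W^u$. As $S$ is tangent to $E^s\oplus E^u$ it is transverse to the compact center foliation, so $\pi_c|_S$ is a local homeomorphism. Passing to the universal cover $\tilde M$ and invoking the global product structure of the $AB$-system (Theorem above), I would show that the lift $\tilde S$ is a graph over the center-leaf space: items (1)--(4) of the global product structure let one intersect $\tilde W^s$ and $\tilde W^u$ plaques inside $\tilde S$ with the center-stable and center-unstable leaves through a prescribed center line, yielding that $\tilde S$ meets every center leaf $\tilde{\mathcal F}^c(x)$ in exactly one point. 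Thus $\pi_c|_{\tilde S}:\tilde S\to\tilde M_c$ is a homeomorphism, and by equivariance under the deck group $\pi_c|_S:S\to M_c$ is a covering map; surjectivity onto $M_c$ also follows directly from the minimality of $\mathcal W^s,\mathcal W^u$, since $\pi_c(S)$ is a nonempty closed set saturated by both foliations.

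\textbf{Step 3 (compactness).} Writing $S=\tilde S/\Gamma_S$ and $M_c=\tilde M_c/\Gamma_c$ with $\Gamma_c\cong\mathbb Z^2$, the covering $\pi_c|_S$ has $[\Gamma_c:\rho(\Gamma_S)]$ sheets, where $\rho:\Gamma_S\to\Gamma_c$ is induced by the homeomorphism $\pi_c|_{\tilde S}$ of Step 2. Hence $S$ is compact exactly when this index is finite, i.e.\ when the stabilizer $\Gamma_S$ has full rank two. This is the main obstacle: one must exclude that $S$ is a non-compact leaf covering $\mathbb T^2$ infinitely. Here the compactness of the center leaves is decisive, since the center direction contributes a rank-one subgroup $\Gamma_{\tilde c}\cong\mathbb Z$ of the deck group of $\tilde M$ that stabilizes each center line and is complementary to the $su$-directions along which $\Gamma_S$ acts; combined with the single-intersection property from Step 2 this forces $\Gamma_S$ together with $\Gamma_{\tilde c}$ to span a finite-index subgroup, so that $\rho(\Gamma_S)$ has finite index in $\Gamma_c$. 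Consequently $\pi_c|_S$ is a finite covering of the torus and $S$ is a compact $su$-leaf, which completes the proof.
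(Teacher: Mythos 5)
Your strategy proves (or rather, asserts) a statement strictly stronger than the theorem, and that stronger statement is exactly what can fail. The theorem claims the \emph{existence} of one compact $su$-leaf; your Steps 2--3 try to show that the \emph{arbitrary} non-open accessibility class $S$ produced in Step 1 is compact. Nothing forces that: a non-accessible $AB$-system may carry non-open accessibility classes that are non-compact $su$-leaves. Indeed, Hammerlindl's trichotomy (quoted in this paper as Theorem \ref{teohamm}) explicitly contemplates, in its case (3), pieces containing uncountably many non-compact $su$-leaves, and each such leaf is a non-open accessibility class that your Step 1 could have handed you. So a correct proof must \emph{select} a special leaf rather than argue about an arbitrary one. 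That is precisely what the machinery of the paper's AI-system subsection is designed for: one passes to the set $\Lambda_f\subset L$ of points on a center line whose class is not open, lets the deck group act through $G\leq\operatorname{Homeo}^{+}(\Lambda_f)$, produces a common fixed point of $G$ (typically as an extremal point of a $G$-invariant compact subset of the ordered set $\Lambda_f$, using the structure of $\pi_1$), and then invokes Lemma \ref{fix and compact leaf} to convert the fixed point into a compact $su$-leaf. Note that the paper itself does not reprove the theorem (it cites \cite{hammerlindl2017ergodic}), but its own argument for Theorem \ref{teoaux} follows exactly this selection pattern: take $t_{+}=\max\widetilde J_0$ and show $t_{+}\in\operatorname{Fix}(G)$.

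Concretely, the gap in your write-up sits in Step 3 at the words ``this forces $\Gamma_S$ together with $\Gamma_{\tilde c}$ to span a finite-index subgroup.'' No argument is given, and none exists at this level of generality: by Step 2 the lift $\tilde S$ is the graph of a function over $\widetilde M_c\cong\mathbb R^2$, and a graph can perfectly well have trivial stabilizer in the deck group (a function with no periodicity whatsoever); neither the single-intersection property nor the complementarity of the center subgroup $\Gamma_{\tilde c}$ constrains the rank of $\rho(\Gamma_S)$. This unproved rank claim is not a technical detail --- it \emph{is} the theorem, and for a badly chosen $S$ it is false. A secondary defect: your reduction to $\pi_c\colon M\to M_c\cong\mathbb T^2$ with $f_c$ Anosov and minimal $\mathcal W^s,\mathcal W^u$ presupposes that the center foliation is a circle fibration, but a general $AB$-prototype on $M_B=N\times\mathbb R/(v,t)\sim(Bv,t-1)$ has compact center leaves only when $B$ has finite order; for $B=A$ (the time-one map of a suspension Anosov flow) the center leaves are flow lines, mostly non-compact, and there is no Hausdorff center quotient at all. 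So even if Step 3 were repaired, the argument would only cover a special class of $AB$-systems, not the statement as cited from \cite{hammerlindl2017ergodic}.
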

By Proposition 4.2 and Theorem 4.3 from \cite{hammerlindl2017ergodic} we have that:
\begin{theorem}\label{Hamfortorus}
  Let $M$ be a  three dimensional closed manifold and $f:M \rightarrow M$ be a  $C^{2}$ dynamically coherent partially hyperbolic diffeomorphism with compact center leaves. If $f$ preserves orientation of $E^{c}$ and it is not accessible then $f$ is an $AB$-system.
\end{theorem}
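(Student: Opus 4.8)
The plan is to recognize that the real content of this statement is a direct application of the structure theory of Hammerlindl and Ures, and that my job is mainly to verify that the hypotheses of their Proposition 4.2 and Theorem 4.3 are met in the present setting. First I would record the structural consequences already available. Since $f$ is dynamically coherent with compact one-dimensional center leaves, the center foliation $\mathcal{F}^c$ is a circle fibration and, by Theorem 3 in \cite{hertz2012maximizing}, the quotient $M_c=M/\mathcal{F}^c$ is homeomorphic to $\mathbb{T}^2$ with $f_c$ a topological Anosov map, topologically conjugate to a linear hyperbolic automorphism $A$ of $\mathbb{T}^2$. Because $f$ preserves an orientation of $E^c$, the projection $\pi_c:M\to M_c$ is an \emph{oriented} $S^1$-bundle over $\mathbb{T}^2$; such a closed $3$-manifold is a $3$-dimensional nilmanifold (namely $\mathbb{T}^3$ when the Euler class vanishes, and a Heisenberg nilmanifold otherwise). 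This is what places $M$ in the algebraic category required by the AB-prototype construction.

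With the base dynamics $A$ in hand, the second ingredient is a commuting automorphism $B$ that encodes the fibration. I would extract $B$ from the holonomy of $\mathcal{F}^c$ over a generating pair of loops in $\mathbb{T}^2$, normalized by the chosen center orientation, and then use the $Df$-invariance of the splitting $TM=E^s\oplus E^c\oplus E^u$ to force $AB=BA$. Here compactness of \emph{every} center leaf is essential: it forces $B$ to have finite order, which is exactly the condition under which the mapping-torus construction $M_B=N\times\mathbb{R}/(v,t)\sim(Bv,t-1)$ yields a circle bundle with all fibers compact, and hence produces a bona fide AB-prototype $f_{AB}(v,t)=(Av,t)$ modeled on the same manifold. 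At this point Proposition 4.2 of \cite{hammerlindl2017ergodic} supplies the leaf conjugacy between the center foliation of $f$ and that of the model, and Theorem 4.3 upgrades it to a homeomorphism $h$ with $h(f(L))=f_{AB}(h(L))$ for every center leaf $L$, identifying the model as an AB-prototype. Since $h$ can be taken to respect the chosen orientation of $E^c$ and $f$ preserves it, the defining conditions of an AB-system are met, which finishes the argument.

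The hard part will not be the bookkeeping of $A$, $B$, and the relation $AB=BA$, but rather checking that the hypotheses of Proposition 4.2 and Theorem 4.3 hold \emph{verbatim} in this setting — in particular that dynamical coherence together with genuinely compact center leaves and an oriented center bundle delivers exactly the oriented circle-bundle-over-$\mathbb{T}^2$ (nilmanifold) topology those results presuppose, so that their conclusions transfer without modification. The non-accessibility hypothesis is the standing assumption of this subsection: in the accessibility framework the classification of Viana–Ures already resolves the problem, so here it serves to confine $f$ to the regime where the AB-classification (rather than an accessible or suspension-type model) is the relevant one, and it is precisely the conclusion ``$f$ is an AB-system'' that will later be combined with Theorem \ref{hamm} to produce a compact $su$-leaf. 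The only genuinely delicate verification, then, is that these topological and dynamical hypotheses are satisfied; everything beyond it is contained in the cited results.
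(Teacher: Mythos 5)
Your logical skeleton matches the paper's exactly, because the paper gives no proof of Theorem \ref{Hamfortorus} at all: it is stated as an immediate consequence of Proposition 4.2 and Theorem 4.3 of \cite{hammerlindl2017ergodic}, specialized to dimension three. So ``check the hypotheses and invoke Hammerlindl'' is the intended argument, and to that extent your proposal is fine. The problems are in the reconstruction you sketch of what those results do, and two of the slips are genuine mathematical errors rather than harmless imprecision.

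First, the alternative ``$M=\mathbb{T}^3$ or $M$ is a Heisenberg nilmanifold'' is incompatible with the rest of your own argument. An AB-prototype all of whose center leaves are compact must have $B$ of finite order (otherwise the image of $\{v\}\times\mathbb{R}$ in $M_B$ is a line for $v$ not $B$-periodic), and a finite-order $B\in GL(2,\mathbb{Z})$ commuting with a hyperbolic $A$ lies in the centralizer $\{\pm A^k\}$, hence $B=\pm\mathrm{Id}$; the resulting $M_B$ is $\mathbb{T}^3$ or a flat quotient of it, never a Heisenberg nilmanifold (whose Seifert fibration has nonzero Euler number). So if $M$ really could be Heisenberg, your construction of a finite-order $B$ would fail and the theorem would be false there. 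What rules this case out is precisely the non-accessibility hypothesis: as the paper itself remarks after Theorem \ref{dichotomy1}, every partially hyperbolic diffeomorphism of a $3$-manifold other than $\mathbb{T}^3$ is accessible. This is the opposite of your closing claim that non-accessibility ``serves only to confine $f$ to the relevant regime'' and does no concrete work; both in ruling out the nontrivial Euler class and inside Hammerlindl's proof (where the lamination of non-open accessibility classes supplies the transverse structure used to build the leaf conjugacy), it is a load-bearing hypothesis.

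Second, your identification of $B$ is off. In the prototype $M_B=N\times\mathbb{R}/(v,t)\sim(Bv,t-1)$, the automorphism $B$ acts on the torus $N$ \emph{transverse} to the center direction, and the leaf space of the center fibration is $N/\langle B\rangle$; $B$ is the monodromy of the mapping-torus fibration $M_B\to S^1$, not the holonomy of $\mathcal{F}^c$ over loops in the base $M_c$ (for an oriented circle bundle that holonomy is trivial). Relatedly, orientability of $E^c$ does not by itself make $\pi_c$ an honest oriented $S^1$-bundle over $\mathbb{T}^2$: the prototype with $B=-\mathrm{Id}$ preserves a center orientation yet its center fibration is a Seifert fibration with exceptional fibers over the orbifold $\mathbb{T}^2/\pm\mathrm{Id}$. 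None of this damages the theorem --- Hammerlindl's Proposition 4.2 and Theorem 4.3 handle these points --- but as written your sketch of their content would not survive contact with these examples, so the proposal should either drop the reconstruction and cite the results outright (as the paper does), or redo the topological bookkeeping with the non-accessibility hypothesis in active use.
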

In order to consider  systems over infranilmanifolds which do not preserve an orientation of $E^{c}$, we also consider the following generalization introduced in \cite{hammerlindl2017ergodic}.
\begin{definition}
 A diffeomorphism $f_{0}$ is an infra-AB-system if an iterate of $f_{0}$ lifts to an AB-system on a finite cover. 
\end{definition}

As for three-dimensional manifolds, the center foliation $\mathcal{F}^{c}$  is uniformly compact then the following result follows from Corollary 4.9 of \cite{hammerlindl2017ergodic}.

\begin{theorem}\label{is-infra-AB}
 Let $M$ be a  three dimensional closed manifold and $f:M\rightarrow M$  a partially hyperbolic diffeomorphism with compact center leaves. If $f$ is not accessible, then $f$ is an infra-AB-system.
\end{theorem}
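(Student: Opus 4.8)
The plan is to reduce the non-orientable situation to the orientation-preserving case already settled in Theorem~\ref{Hamfortorus}, exploiting that \emph{infra-AB} is by definition the orientation-agnostic enlargement of \emph{AB}. The whole argument hinges on one dimension-specific input: in a three-dimensional manifold, a partially hyperbolic diffeomorphism with compact one-dimensional center leaves has a \emph{uniformly} compact center foliation, meaning the center leaves have uniformly bounded length. This is precisely the standing hypothesis of Corollary 4.9 of \cite{hammerlindl2017ergodic}, so the bulk of the work is to justify uniform compactness and then quote that corollary.

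First I would record uniform compactness of $\mathcal{F}^{c}$. Since each center leaf is a circle and $M$ is three-dimensional, $\mathcal{F}^{c}$ is a foliation of a compact $3$-manifold by circles. A classical result on such circle foliations (in the spirit of Epstein's theorem) rules out the bad phenomenon of leaves of unbounded length accumulating onto shorter leaves, and forces a uniform bound on the lengths of center leaves. This upgrades the hypothesis ``compact center leaves'' to the statement that $\mathcal{F}^{c}$ is uniformly compact, which is the form in which the machinery of \cite{hammerlindl2017ergodic} applies.

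Second, I would dispose of the orientation obstruction by passing to a finite cover and an iterate. Because $E^{c}$ is a line bundle, the existence of a globally coherent $f$-preserved orientation of $E^{c}$ is controlled by a $\mathbb{Z}/2\mathbb{Z}$-valued obstruction coming from $\pi_1(M)$ together with the action of $f$ on the orientation of $E^{c}$. Taking the associated (at most double) cover $\hat{M}\to M$ renders the lifted bundle $\hat{E}^{c}$ orientable, and replacing the lift $\hat{f}$ by $\hat{f}^{2}$ when necessary yields an orientation-preserving lift. This lifted iterate is again partially hyperbolic with compact center leaves, and non-accessibility is inherited upward, since a covering projection carries $su$-paths to $su$-paths, so an accessibility class downstairs that is a proper subset of $M$ cannot become all of $\hat{M}$.

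Third, I would apply Theorem~\ref{Hamfortorus} to the orientation-preserving lift on $\hat{M}$, concluding that it is an AB-system; by the very definition of infra-AB-system, an iterate of $f$ then lifts to an AB-system on a finite cover, so $f$ is infra-AB. The main obstacle is precisely the verification that non-accessibility descends through finite covers without accessibility classes merging or splitting; this technical content is exactly what is packaged inside Corollary 4.9 of \cite{hammerlindl2017ergodic}, so in practice the cleanest route is to check the uniform-compactness hypothesis as above and then invoke that corollary directly.
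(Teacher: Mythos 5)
Your proposal is correct and follows essentially the same route as the paper: the paper's entire proof consists of observing that in dimension three the compact one-dimensional center foliation is uniformly compact and then invoking Corollary 4.9 of \cite{hammerlindl2017ergodic}, which is exactly the ``cleanest route'' you settle on. Your additional sketch (orientation double cover, passing to an iterate, lifting non-accessibility, and applying Theorem~\ref{Hamfortorus}) is an unpacking of what that corollary already packages, not a different approach.
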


 \begin{proposition}[\cite{rodriguez2006accessibility}]
 Suppose $f$ is a partially hyperbolic system with one-dimensional center on a (not necessarily compact) manifold $M$. For $x \in M$, the following are equivalent:
 \begin{itemize}
     \item $AC(x)$ is not open.
     \item $AC(x)$ has empty interior.
      \item $AC(x)$ is a complete $C^{1}$-codimension one submanifold. 
 \end{itemize}
 If $f$ is non-accessible, the set of non-open accessibility classes form a lamination.
 \end{proposition}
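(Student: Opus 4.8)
The plan is to prove the three stated conditions equivalent and then deduce the lamination statement; two of the implications are purely formal. A $C^1$ codimension-one submanifold is nowhere dense, hence has empty interior, and a set with empty interior cannot be open unless it is empty. Since $x \in AC(x)$ we have $AC(x) \neq \emptyset$, so this disposes of the chain ``(codimension-one submanifold) $\Rightarrow$ (empty interior) $\Rightarrow$ (not open)''.

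For the reverse ``(not open) $\Rightarrow$ (empty interior)'' I would argue by homogeneity. Recall that $AC(x)$ is path-connected by $su$-paths and invariant under the local stable and unstable holonomies. Suppose $AC(x)$ had an interior point $p$. Given any $q \in AC(x)$, choose an $su$-path from $p$ to $q$; composing the corresponding local $s$- and $u$-holonomies yields a local homeomorphism $h$ defined near $p$ with $h(p)=q$ and $h(U) \subset AC(x)$ for a neighborhood $U$ of $p$, because each holonomy moves points along stable or unstable leaves and therefore stays inside the accessibility class. Thus $q$ is interior as well, so $AC(x)$ is open, contradicting the hypothesis. This gives the equivalence of the first two conditions.

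The core of the argument is ``(not open) $\Rightarrow$ ($C^1$ codimension-one submanifold)'', where one-dimensionality of the center is essential. First I would record that $E^s$ and $E^u$ are always tangent to $AC(x)$ wherever it is a manifold, since moving inside $\mathcal{F}^s$ or $\mathcal{F}^u$ stays in the class; the dichotomy is then governed by whether $su$-loops produce motion in the center. Concretely, for a small $su$-loop $\gamma$ based at $x$ the composed holonomy returns to $\mathcal{F}^c(x)$ and determines an endpoint $\rho(\gamma) \in \mathcal{F}^c(x)$; let $D(x)$ be the set of all such endpoints, which is invariant under concatenation and reversal of loops. Using that $\mathcal{F}^c(x)$ is one-dimensional, I would show $D(x)$ is either $\{x\}$ or contains an interval: if some loop gives $\rho(\gamma)\neq x$, then by rescaling the loop and invoking continuity the attainable endpoints fill a neighborhood of $x$ in $\mathcal{F}^c(x)$. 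In that case $AC(x)$ contains an open center arc through $x$, and together with the stable and unstable directions this produces an open neighborhood inside $AC(x)$, so $AC(x)$ is open. Contrapositively, if $AC(x)$ is not open then $D(x)=\{x\}$, the $su$-holonomies are compatible, and $AC(x)$ is locally the graph of a section transverse to the center, i.e. a local integral leaf of $E^s\oplus E^u$.

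Finally I would upgrade this local graph to a complete $C^1$ codimension-one submanifold and assemble the lamination. The $C^1$ regularity and tangency to $E^s\oplus E^u$ come from the fact that, although the strong foliations are only H\"older as families, their individual leaves and the induced holonomies are $C^1$; patching the local graphs along $su$-paths, which agree precisely because $D(x)=\{x\}$, produces an injectively immersed complete $C^1$ $su$-leaf. For the last clause, assume $f$ is non-accessible so that non-open classes exist. Distinct accessibility classes are disjoint, each non-open one is such a codimension-one $su$-leaf, and the local graphs vary continuously and are uniformly transverse to the center because the stable and unstable holonomies depend continuously on the base point. These continuous, disjoint, uniformly transverse plaques fit into local product charts, which is the definition of a lamination. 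The hard part, I expect, is the center-displacement step: making precise that a single nonzero $su$-loop displacement forces an open set of center displacements (the engulfing argument), together with the control on holonomy regularity needed to guarantee that the resulting codimension-one set is genuinely $C^1$ rather than merely topological.
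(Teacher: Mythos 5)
A preliminary remark: the paper itself contains no proof of this proposition --- it is imported verbatim from \cite{rodriguez2006accessibility} and used as a black box --- so your attempt can only be measured against the argument in that reference. In outline you do follow it: the two formal implications, the homogeneity argument showing that a class with an interior point is open, the displacement set $D(x)$ of $su$-loops acting on the one-dimensional center with an intermediate-value argument producing the dichotomy, and the assembly of the non-open classes into a lamination (where, as you implicitly use, closedness of the union of non-open classes is immediate because openness is a property of the whole class).

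The genuine gap is in the regularity step, exactly where you predicted trouble. You justify the $C^1$ smoothness of a non-open class by asserting that ``the induced holonomies are $C^1$''. That assertion is false in general: stable and unstable holonomies of a $C^2$ partially hyperbolic diffeomorphism are only H\"older continuous, and their differentiability requires bunching-type inequalities in the spirit of Pugh--Shub--Wilkinson --- for one-dimensional center, roughly $\nu<\gamma^2$, comparing the stable contraction with the \emph{square} of the center derivative --- which do not follow from domination; one-dimensionality of the center gives pointwise center bunching $\nu<\gamma\hat\gamma$ for free, but not this stronger condition. So a proof resting on smooth holonomies does not go through. The correct route, and the one taken in \cite{rodriguez2006accessibility}, avoids holonomy regularity entirely: once $D(y)=\{y\}$ for every $y$ in the class, the class near a point $w$ coincides with the bi-saturated plaque $\bigcup_{z\in\mathcal{F}^{s}_{\mathrm{loc}}(w)}\mathcal{F}^{u}_{\mathrm{loc}}(z)$, which is a Lipschitz graph over $E^s(w)\oplus E^u(w)$. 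For a nearby point $w'$ of the class, writing $w'\in\mathcal{F}^{u}_{\mathrm{loc}}(z')$ with $z'\in\mathcal{F}^{s}_{\mathrm{loc}}(w)$, the uniform $C^1$ regularity of \emph{individual} strong leaves and the continuity of the bundles $E^s,E^u$ give $w'-w\in E^s(w)\oplus E^u(w)$ up to an error $o(|w'-w|)$, the graph property bounding $|w'-z'|+|z'-w|$ by $C|w'-w|$. Hence the class is differentiable at every point with tangent plane $E^s\oplus E^u$ varying continuously, and this Whitney-type tangency (alternatively, a Journ\'e-type lemma along the two transverse families of strong plaques) is what yields $C^1$ --- not smoothness of holonomies. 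A smaller issue in the same part of your argument: contracting a loop with $\rho(\gamma)\neq x$ gives, via the intermediate value theorem, only the interval from $x$ to $\rho(\gamma)$; to claim the attainable endpoints fill a neighborhood of $x$ you must invoke the reversal of loops (which you do build into $D(x)$, but never use) to produce displacement on both sides of $x$.
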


\subsubsection{AI-systems}\label{subsection AI-systems}
We now consider partially hyperbolic systems on non-compact manifolds. Suppose $M$ is compact and $f: M \rightarrow M$ is partially hyperbolic. Then, any lift of $f$ to a covering space of $M$ is also considered to be partially hyperbolic. Furthermore, any restriction of a partially hyperbolic diffeomorphism to an open invariant subset is still considered to be partially hyperbolic.

Let $A$ be a hyperbolic automorphism of the compact nilmanifold $N$ and $I \subset \mathbb{R}$ an open interval. The AI-prototype is defined as
$$
f_{A I}: N \times I \rightarrow N \times I, \quad(v, t) \rightarrow(A v, t) .
$$
\begin{definition}
A partially hyperbolic diffeomorphism $f:X\rightarrow X$ on a (non-compact) manifold $X$ is an AI-system if it has global product structure, preserves  orientation of its center direction, and it is leaf conjugated to an AI-prototype.
\end{definition}

Now, consider $f:X\rightarrow X$ an AI-system. Then, there exists a prototype AI-system $f_{AI}:N \times I \rightarrow N \times I$ and a homeomorphism $h:X\rightarrow N\times I$ that conjugate the central leaves of $f$ and $f_{AI}$. Let $\widetilde{X}\rightarrow X$ and $\widetilde{N}\rightarrow N$ be universal coverings. Then, $f$ and $h$ are lifted to functions $\tilde{f}: \widetilde{X} \rightarrow \widetilde{X}$ and $\tilde{h}: \widetilde{X} \rightarrow \widetilde{N} \times I$. Each central leaf of $\tilde{f}$ is of the form $\tilde{h}^{-1}(\{v\} \times I)$ for some $v \in \widetilde{N}$. In general, the liftings of $f$ and $h$ are not unique; they can be chosen so that 

$$\tilde{h}\circ \tilde{f}\circ  \tilde{h}^{-1}(\nu \times I)=A v \times I,$$

where $A: \widetilde {N} \rightarrow \widetilde{N}$ is an automorphism of the hyperbolic Lie group $\widetilde{N}$. Since $A$ fixes the identity element of the Lie group, there exists a central leaf mapped to itself by $\widetilde{f}$, which we denote by $L$. As $L$ is homeomorphic to $\mathbb{R}$, we can give an order to the points of $L$ and define open intervals $(a, b) \subset L$ for $a, b \in L$ and the supremum $\sup J$ for subsets $J\subset L$ in the same way that they are defined in $\mathbb{R}$.

\begin{figure}[H]
    \centering
    \includegraphics[scale=0.4]{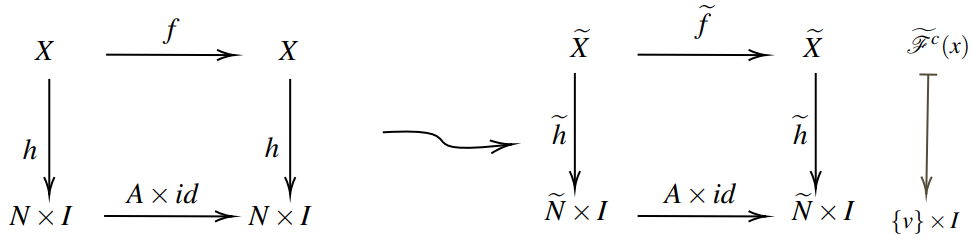}
    %\caption{Caption}
    \label{fig:my_label}
\end{figure}

 Define a closed subset
$$\Lambda_f=\{t \in L: A C(t) \text{ is not open }\} .$$ 

\begin{lemma}[\cite{hammerlindl2017ergodic}]
Let $f:X\rightarrow X$ be a non-accessible  $AI$-system. Then  $\Lambda_f$ is a non-empty set.
\end{lemma}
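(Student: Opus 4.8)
The plan is to show that the lamination of non-open accessibility classes must meet the distinguished center leaf $L$, by proving that any complete su-leaf, once lifted to the universal cover, projects onto the entire center quotient; the point $t\in L$ it produces then lies in $\Lambda_f$.

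First I would use non-accessibility to produce a candidate leaf. The open accessibility classes are pairwise disjoint and cover the connected manifold $X$, so if every class were open there would be a single class and $f$ would be accessible. Hence there is a point $p$ whose accessibility class $AC(p)$ is not open, and by the Proposition of \cite{rodriguez2006accessibility} this class is a complete $C^1$ codimension-one submanifold tangent to $E^s\oplus E^u$, i.e. a complete su-leaf. Lifting to the universal cover $\widetilde X$ via the covering $\pi:\widetilde X\to X$ (a local homeomorphism preserving the invariant foliations), the component $\tilde C$ of $\pi^{-1}(AC(p))$ containing a chosen lift is again a complete $C^1$ codimension-one su-leaf; it is su-saturated, not open, and coincides with the accessibility class $AC(t)$ of each of its points $t$.

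The heart of the argument is to show $\pi_c(\tilde C)=\widetilde N$, where $\pi_c:\widetilde X\to\widetilde N\cong\mathbb R^2$ is the center-quotient projection and $\tilde f$ induces the hyperbolic linear automorphism $A$. Using the global product structure of the AI-system (condition (3) together with $\pi_c(\mathcal F^{cs})=\mathcal W^s$), the restriction of $\pi_c$ to any stable leaf of $\tilde f$ is a bijection onto the corresponding leaf $\mathcal W^s$ of $A$, and likewise for unstable leaves. Since $\tilde C$ is a complete su-leaf it contains the full stable and unstable leaf through each of its points, so $\pi_c(\tilde C)$ is saturated by both $\mathcal W^s$ and $\mathcal W^u$. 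The two foliations $\mathcal W^s,\mathcal W^u$ of $A$ are transverse linear foliations of $\mathbb R^2$: given $w_0\in\pi_c(\tilde C)$ and any target $w_1\in\mathbb R^2$, the stable line through $w_0$ meets the unstable line through $w_1$ in a unique point, giving an su-path from $w_0$ to $w_1$ lying entirely in $\pi_c(\tilde C)$. Hence $\pi_c(\tilde C)=\widetilde N$.

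Finally I would conclude: since $\pi_c(\tilde C)$ is all of $\widetilde N$ and each center leaf of $\tilde f$ corresponds to exactly one point of $\widetilde N$, the leaf $\tilde C$ meets every center leaf, in particular $L$. Any $t\in\tilde C\cap L$ satisfies $AC(t)=\tilde C$, which is not open, so $t\in\Lambda_f$ and $\Lambda_f\neq\emptyset$. The main obstacle is the middle step, namely upgrading the su-saturation of $\pi_c(\tilde C)$ to full surjectivity onto the quotient: this requires the global product structure to guarantee that $\pi_c$ maps entire stable and unstable leaves bijectively onto the quotient leaves, after which the accessibility of the linear Anosov system on $\mathbb R^2$ forces $\pi_c(\tilde C)$ to be everything. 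It is precisely here that the hypothesis that $f$ is an AI-system (hence enjoys global product structure) is indispensable.
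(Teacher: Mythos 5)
First, note that the paper itself gives no proof of this lemma: it is imported verbatim from \cite{hammerlindl2017ergodic}, so the comparison is against the standard argument there. Your skeleton is that standard argument: non-accessibility yields a non-open accessibility class, which by the quoted proposition of \cite{rodriguez2006accessibility} is a complete codimension-one $su$-leaf; lifting it to $\widetilde X$ and using global product structure, it must meet the distinguished center leaf $L$, and any intersection point lies in $\Lambda_f$. That outline, including the lifting step (the lifted class sits inside a codimension-one submanifold, hence has empty interior and is not open), is sound.

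There is, however, one load-bearing step that you assert without justification: that $\pi_c$ carries center-stable leaves of $\tilde f$ onto the \emph{linear} stable lines of $A$ (your ``$\pi_c(\mathcal F^{cs})=\mathcal W^s$ of $A$''), so that $\pi_c(\tilde C)$ is saturated by two transverse \emph{linear} foliations of $\mathbb R^2$. A leaf conjugacy only matches center leaves; it gives no control whatsoever on how $cs$- and $cu$-leaves project to the leaf space, so identifying the projected $cs$-foliation with the linear foliation of $A$ is genuinely nontrivial (it is part of what \cite{hammerlindl2017ergodic} has to prove, and topological conjugacies on non-compact spaces do not automatically preserve metrically defined stable sets). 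The good news is that this identification is unnecessary, and the fix uses only what you already invoke. Work entirely upstairs: given $s\in\tilde C$ and any $y\in\widetilde X$, global product structure gives a unique point $z\in W^{u}(s)\cap W^{cs}(y)$ (condition (1)), and $z\in\tilde C$ since $\tilde C$ is $su$-saturated; then, since $z\in W^{cs}(y)$, condition (3) gives a point $w\in W^{s}(z)\cap W^{c}(y)$, and again $w\in\tilde C$. Hence $\tilde C$ meets \emph{every} center leaf of $\tilde f$, in particular $L$, with no reference to the quotient or to linearity. (Equivalently: the retraction $R:\widetilde X\to L$ that the paper defines immediately after the lemma satisfies $R(x)\in AC(x)$ by construction, so every accessibility class meets $L$.) This rewriting also removes your implicit specialization $\widetilde N\cong\mathbb R^2$: the lemma is stated for AI-systems over arbitrary nilmanifolds $N$, and the product-structure argument is dimension-free.
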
  
By global product structure, for any $x \in\widetilde{X}$, there is a unique point $R(x) \in L$ such that $W^{u}(x)$ intersects $W^{s}(R(x))$. This defines a retraction, $R:\widetilde{X} \rightarrow L$. By the previous lemma, if $t \in \Lambda_f$, then $R^{-1}(t)=AC(t)$.  Let $\alpha:\widetilde{X} \rightarrow\widetilde{X}$ be a deck transformation of the covering $\widetilde{X} \rightarrow X$. Then,  $\alpha$ defines a map $g_{\alpha} \in \operatorname{Homeo}^{+}(\Lambda_f)$ given by the restriction of $R \circ \alpha$ to $\Lambda_f$ and  $G=\left\{g_{\alpha}: \alpha \in \pi_{1}(\widetilde{X})\right\}.$

\begin{figure}[H]
    \centering
    \includegraphics[scale=0.8]{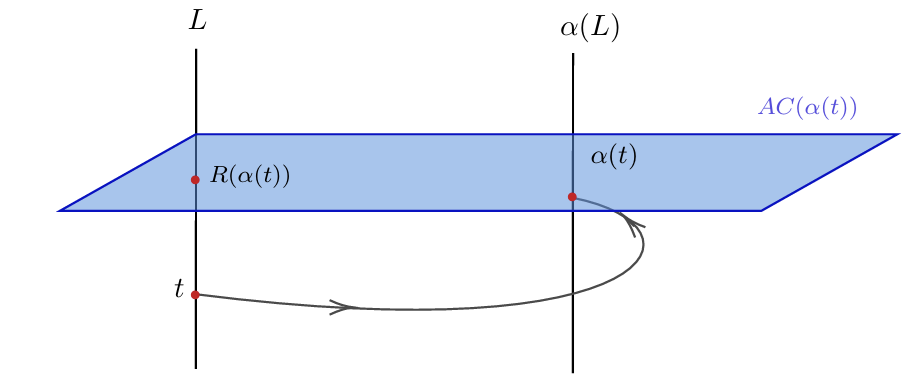}
    \caption{The map $g_{\alpha}$}
\end{figure}

\begin{lemma}\cite{hammerlindl2017ergodic}\label{fix and compact leaf}
 Given a point $t \in \Lambda_f$, $AC(t) \subset\widetilde{X}$ projects to a compact $su$-leaf in $X$ if and only if $t \in \operatorname{Fix}(G)$.
\end{lemma}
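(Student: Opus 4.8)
The plan is to convert the geometric assertion ``$\pi(AC(t))$ is compact'' into a purely group-theoretic condition on the action of $G$ on $\Lambda_f$, using the deck group $\Gamma$ of the universal covering $\pi:\widetilde X\to X$; note $\Gamma\cong\pi_1(X)\cong\mathbb Z^2$ since $X\cong N\times I$ with $N=\mathbb T^2$. The first step is the covering correspondence. For $\alpha\in\Gamma$ and $t\in\Lambda_f$, the deck transformation $\alpha$ is a homeomorphism preserving the $su$-lamination, so it sends the non-open class $AC(t)$ to the non-open class $AC(\alpha(t))$. Every non-open accessibility class is a fibre of $R$ over a point of $\Lambda_f$ and $R$ is constant on it, so $AC(\alpha(t))=R^{-1}(R(\alpha(t)))=R^{-1}(g_\alpha(t))$, i.e.
\[
\alpha\big(AC(t)\big)=AC\big(g_\alpha(t)\big).
\]
Hence the stabiliser $H_t:=\{\alpha\in\Gamma:\alpha(AC(t))=AC(t)\}$ is the preimage in $\Gamma$ of $\operatorname{Stab}_G(t)$, and two points of $AC(t)$ have the same image under $\pi$ exactly when they differ by an element of $H_t$. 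Thus $\pi|_{AC(t)}$ is the quotient map and $\pi(AC(t))=AC(t)/H_t$.

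Next I would pin down the topology of $AC(t)$. By the global product structure of the AI-system and the leaf conjugacy $\tilde h:\widetilde X\to\widetilde N\times I$, the center projection $P:\widetilde X\to\widetilde N\cong\mathbb R^2$ restricts to a homeomorphism $P|_{AC(t)}:AC(t)\to\widetilde N$; in other words $AC(t)$ is a complete transversal to the center foliation, meeting each center leaf exactly once, and $P|_{AC(t)}$ intertwines the $\Gamma$-action with the $\mathbb Z^2$-translation action on $\widetilde N$, whose quotient is the compact torus $N$. Therefore $AC(t)\cong\mathbb R^2$ and $H_t$ acts on it as the corresponding subgroup of translations, so $\pi(AC(t))=AC(t)/H_t$ is compact if and only if $H_t$ has finite index in $\Gamma$; by orbit--stabiliser this is equivalent to the $G$-orbit of $t$ being finite.

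Finally I would invoke the orientation argument, which is the conceptual heart of the proof. The elements of $G$ are orientation-preserving homeomorphisms of $\Lambda_f\subset L\cong\mathbb R$. If the $G$-orbit of $t$ is finite, then $t$ is periodic for every $g\in G$; but an order-preserving bijection with $g(t)>t$ forces $g^{n}(t)>t$ strictly increasing, and $g(t)<t$ forces strictly decreasing, so neither is compatible with periodicity, leaving only $g(t)=t$. Hence a finite $G$-orbit of $t$ is automatically the singleton $\{t\}$, that is $t\in\operatorname{Fix}(G)$, and the converse is immediate. Chaining the three steps yields $\pi(AC(t))$ compact $\iff H_t$ has finite index $\iff G\cdot t$ is finite $\iff t\in\operatorname{Fix}(G)$, which is the claim.

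The main obstacle is the second step: establishing rigorously that $P|_{AC(t)}$ is a $\Gamma$-equivariant homeomorphism onto $\widetilde N$ (both surjectivity and injectivity, i.e.\ that the $su$-leaf is a complete graph over $\widetilde N$ and that the deck action reduces to the standard translation action). This is precisely where the global product structure of AB/AI-systems and the exact form of the leaf conjugacy must be used; once it is in place, the correspondence of the first step and the order argument of the last step are essentially formal.
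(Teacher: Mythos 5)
The paper does not prove this lemma at all: it is imported verbatim from \cite{hammerlindl2017ergodic}, so there is no internal proof to compare against, and your proposal must be judged on its own. Its three-step scheme --- (i) the correspondence $\alpha\bigl(AC(t)\bigr)=AC\bigl(g_\alpha(t)\bigr)$ between deck transformations and the $G$-action, (ii) the equivariant identification of $AC(t)$ with $\widetilde N\cong\mathbb{R}^2$, so that $\pi\bigl(AC(t)\bigr)$ is compact iff the stabiliser $H_t$ has finite index iff the $G$-orbit of $t$ is finite, and (iii) the observation that a finite orbit under a group of order-preserving homeomorphisms of $\Lambda_f\subset L\cong\mathbb{R}$ must be a global fixed point --- is correct and is exactly the mechanism behind the quoted result.

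The step you flag as the main obstacle is genuine but closes quickly with material the paper has already set up, so it is a removable gap rather than a structural one. First, every non-open class is a fiber of $R$: for any $x$, putting $t=R(x)$, the leaf $W^u(x)$ meets $W^s(t)$, so $t\in AC(x)$; hence a non-open class equals $AC(t)=R^{-1}(t)$ with $t\in\Lambda_f$, where the identity $R^{-1}(t)=AC(t)$ is stated in the paper immediately before the lemma. Second, each fiber of $R$ meets every center leaf exactly once, by global product structure alone: given a center leaf $W^c(y)$, property (2) gives a unique $z\in W^s(t)\cap W^{cu}(y)$, and property (4), applied to $y\in W^{cu}(z)$, gives a unique $x\in W^u(z)\cap W^c(y)$; this $x$ is then the unique point of $W^c(y)$ with $R(x)=t$. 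Consequently $P|_{AC(t)}\colon AC(t)\to\widetilde N$ is a continuous bijection of $2$-manifolds, hence a homeomorphism by invariance of domain, and equivariance holds because each deck transformation of $\widetilde X\to X$ permutes center leaves and therefore descends to a deck transformation of $\widetilde N\to N$, the assignment $\alpha\mapsto\bar\alpha$ being the isomorphism $\pi_1(X)\to\pi_1(N)$ induced by the center projection (a homotopy equivalence, since the fiber $I$ is contractible). One further point needs care in the implication ``compact leaf $\Rightarrow$ fixed point'': you cannot treat $\pi\bigl(AC(t)\bigr)$ merely as the continuous bijective image of $AC(t)/H_t$, since compactness of the image does not a priori pull back to the quotient. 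Use instead that a compact leaf is embedded, so $\pi$ restricted to $\pi^{-1}\bigl(\pi(AC(t))\bigr)$ is a covering map and $\pi|_{AC(t)}\colon AC(t)\to\pi\bigl(AC(t)\bigr)$ is a covering with deck group $H_t$; then $\pi\bigl(AC(t)\bigr)\cong AC(t)/H_t$ and your index argument applies. With these completions the proof is sound in the case $N=\mathbb{T}^2$ relevant to this paper; for a general compact nilmanifold $N$ one replaces ``finite index in $\mathbb{Z}^2$'' by ``finite index in the lattice $\pi_1(N)$'', the compactness criterion for $\widetilde N/\bar H_t$ being the same.
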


\section{Proof of the results}
\subsection{Proof of Theorem \ref{teoaux}}

   % \textbf{Theorem \ref{teoaux}} \textit{
  %Let $f: \mathbb{T}^3 \in \phc^{2}(\mathbb{T}^{3})$. If  $f$ admits a measure of maximal entropy  $\mu$ with $\lambda_c(\mu)=0$, then
 %\begin{itemize}
  % \item[(1)] $\text{supp}(\mu)=\mathbb{T}^{3}$, or
   %\item[(2)] the support of every measure of maximal entropy with zero central exponent contains a  periodic $su$-torus.
 %\end{itemize}  
%}

\begin{proof}[Proof of Theorem \ref{teoaux}]
Let $\mu$ be a measure of maximal entropy  with $\lambda_c(\mu) = 0$ and suppose  that $\supp(\mu)\neq \mathbb{T}^{3}$.  Since  $\lambda_c(\mu) = 0$,  the support  is $s,u$-saturated set (Corollary \ref{saturado}). Also,  $\supp(\mu)\neq \mathbb{T}^{3}$, then we have that    $f$ is not accessible.

Without loss of generality we can assume that $\supp(\mu)$ is a minimal $f$-invariant $u$-saturated  set.  Now, since  $f$ is not accessible by Theorem  \ref{is-infra-AB} we have that $f$ is an infra-AB-system. Therefore,  there exists   an iterate $n_0$ of $f$  that lifts to an AB-system $g$ on a finite cover $\overline{M}$. Consider  $\Lambda$ to be a lifting of $\supp(\mu)$  and $\widehat{\mu}$  an ergodic measure of maximal entropy of $g$ such that $\supp(\widehat{\mu})=\Lambda$ and it is projected on $\mu$. Then  we can suposse that $\Lambda$ is a $g$-invariant minimal $u$-saturated set (Corollary \ref{saturado}) and $\lambda_c(\widehat{\mu})=0$.\\

To prove  theorem \ref{teoaux}, it is enough to prove that   $\Lambda$ is the orbit of a periodic compact leaf $\mathbb{T}_{su}$. Indeed, if    $\Lambda$ is the orbit of a periodic compact leaf $\mathbb{T}_{su}$, then $\pi(\mathbb{T}_{su})\subset \pi(\Lambda)=\supp(\mu)$ is a periodic compact $su$-torus.  
\begin{claim}\label{claimtorus}
$\Lambda$ is a periodic $su$-compact leaf.
\end{claim}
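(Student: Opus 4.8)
The plan is to analyze how $\Lambda$ meets the center foliation and to combine minimality with the hypothesis $\lambda_c(\widehat\mu)=0$; the AI-system machinery recalled above (the retraction $R$, the set $\Lambda_{\tilde g}$, the group $G$, and Lemma \ref{fix and compact leaf}) will be used at the end to identify the resulting object as a compact $su$-leaf.

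First I would show that $\Lambda$ meets every center leaf in a thin set. Since $\widehat\mu$ projects to the unique MME $\eta$ of the Anosov quotient and $\supp\eta$ is the whole quotient torus, we get $\pi_c(\Lambda)=\overline{M}_c$; hence for every $x$ the fiber $C_x:=\Lambda\cap\mathcal F^c(x)$ is a nonempty closed subset of the circle $\mathcal F^c(x)$. The $s$- and $u$-holonomies between center leaves are global homeomorphisms and preserve $\Lambda$, so they carry $C_x$ onto $C_y$, while $g$ carries $C_x$ onto $C_{g(x)}$. If some $C_x$ had nonempty interior, then either $C_x=\mathcal F^c(x)$ for every $x$ -- forcing $\Lambda$ to be center-saturated, hence saturated by $E^s$, $E^c$ and $E^u$ and thus equal to $\overline{M}$, contrary to $\supp(\mu)\neq\mathbb T^3$ -- or the boundaries $\bigcup_x\partial C_x$ would form a proper, nonempty, closed, $g$-invariant and $su$-saturated subset of $\Lambda$, contradicting minimality. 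Thus each $C_x$ is a nonempty closed set with empty interior.

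The heart of the matter -- and the step I expect to be the main obstacle -- is to upgrade \emph{empty interior} to \emph{finite} using $\lambda_c(\widehat\mu)=0$, that is, to rule out $C_x$ being a Cantor set. The natural approach is to examine the complementary gaps of $C_x$ in $\mathcal F^c(x)$: the holonomies and $g$ permute these gaps, and a zero center exponent should prevent any uniform expansion or contraction of their lengths, so that the finitely many longest gaps determine a closed, $g$-invariant, $su$-saturated proper subset of $\Lambda$ and again contradict minimality. The difficulty is that $\lambda_c(\widehat\mu)=0$ is only an averaged statement, whereas this argument needs genuinely uniform control of the center holonomies over the minimal set $\Lambda$ (for instance an equicontinuity statement, or a bounded-distortion estimate promoted from the Birkhoff average); making this rigorous is where the real work lies. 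Granting it, every $C_x$ is finite.

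Finally, if the fibers $C_x$ are finite then $\Lambda$ is a finite-to-one cover of $\overline{M}_c$ transverse to the center foliation, i.e.\ a finite disjoint union of compact surfaces tangent to $E^s\oplus E^u$ -- compact $su$-leaves. Lifting to $\widetilde{\overline{M}}$, Lemma \ref{fix and compact leaf} identifies each such leaf with a point of $\operatorname{Fix}(G)$, and since $g$ permutes these leaves, minimality of $\Lambda$ forces the collection to be a single $g$-orbit of a periodic compact $su$-leaf $T$, so that $\Lambda=\bigcup_i g^i(T)$. This proves the Claim; projecting by $\pi$ then yields the periodic compact $su$-torus contained in $\supp(\mu)$ required in Theorem \ref{teoaux}.
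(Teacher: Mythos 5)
Your argument hinges on the step you yourself flag as the obstacle: promoting ``each center fiber $C_x=\Lambda\cap\mathcal{F}^c(x)$ has empty interior'' to ``each $C_x$ is finite'' using $\lambda_c(\widehat{\mu})=0$. As written this is not a proof but a hope, and it is precisely where the whole difficulty of the Claim sits, so the proposal has a genuine gap. The exponent hypothesis is an averaged, measure-theoretic statement along $\widehat{\mu}$-typical orbits; it gives no uniform bound on $\|Dg^n|_{E^c}\|$ over all of $\Lambda$, and no control at all over the center-direction behaviour of the $s$- and $u$-holonomies between center leaves (in this setting these are merely homeomorphisms; there is no bounded-distortion theory for them). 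Worse, the complementary gaps of $C_x$ lie in the complement of $\supp(\widehat{\mu})$, so the measure literally cannot see them: no Birkhoff-type estimate derived from $\lambda_c(\widehat{\mu})=0$ constrains how $g$ and the holonomies permute gaps or distort their lengths, ``the finitely many longest gaps'' need not be well defined (Riemannian length along center leaves is not invariant) nor permuted among themselves, and hence the intended proper closed invariant $u$-saturated subset of $\Lambda$, and with it the contradiction with minimality, never materializes. A Cantor-like fiber is not excluded by your scheme.

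The paper's proof never establishes fiber finiteness and uses $\lambda_c(\widehat{\mu})=0$ only through Corollary \ref{saturado}, to ensure $\Lambda$ is both $s$- and $u$-saturated; everything else is AB/AI structure theory. Non-accessibility gives the AB-system $g$ (Theorem \ref{is-infra-AB}), which has an invariant circle center leaf $S$; Denjoy's theorem (made legitimate by showing $S$ is $C^2$ via $2$-normal hyperbolicity) forces $\rho(g|_S)$ rational. Setting $\mathcal{S}(g)$ to be the points of $S$ whose accessibility class is an invariant compact manifold, one assumes $\Lambda\neq AC(x)$ for all $x\in\mathcal{S}(g)$, takes a component $J$ of $S\setminus\mathcal{S}(g)$ meeting $\Lambda$ and the extremal point $x_+=\max(\Lambda\cap J)$, shows $AC(x_+)$ is not open (otherwise it would overshoot the maximum), so that $g|AC(J)$ is a non-accessible AI-system; then in the universal cover the deck group $G\le \mathrm{Homeo}^{+}(\Lambda_g)$ must fix the lifted maximum $t_+$, and Lemma \ref{fix and compact leaf} turns this fixed-point data into the statement that $AC(x_+)$ is an invariant compact $su$-leaf, i.e.\ $x_+\in\mathcal{S}(g)$, a contradiction; hence $\Lambda=AC(x)$ is itself a fixed compact $su$-torus. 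The point to internalize is that the maximum of an invariant set on an ordered center leaf is automatically fixed by any orientation-preserving action, and this extremal-point trick is what substitutes for the uniform distortion or equicontinuity control your approach would require. Your outer steps (projection onto the Anosov quotient, holonomy invariance of the fibers, the final identification via Lemma \ref{fix and compact leaf}) are reasonable, but without the middle step the proof does not stand.
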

\noindent\textit{Proof of the Claim \ref{claimtorus}.} Since $\widehat{\mu}$ is an ergodic measure of maximal entropy  for $g$ with $\lambda_c(\widehat{\mu})= 0$ then $\widehat{\mu}$ is a measure of maximal entropy for $g^{-1}$. By Corollary \ref{saturado} applied to  $g^{-1} $ we have that $\supp(\widehat{\mu})$ is $s$-saturated set. Moreover, as $\Lambda=\supp(\widehat{\mu})$ is an invariant minimal $u$-saturated set  for $g$ then $\widehat{\mu}$ is a $s,u$-saturated set.  \\

The $AB$-prototype $f_{AB}$ has an invariant center leaf which is a circle. By the leaf conjugacy with a $AB$-prototype, $g$  has an invariant center leaf. Let call this leaf $S$.

Since $\Lambda\cap S\neq S$  is a $g$-invariant set, the Denjoy's theorem implies   that $g|_{S}$ has a rational rotation number $\rho(g|_{S})=p/q\in \mathbb{Q}$.  

Let us comment on the use of the Denjoy's theorem, due to the fact that it needs regularity of the dynamic on the circle. In fact as $g$ is $C^2$ it is enough to show that the center leaf $S$ is also $C^2$. In fact, if $g|_{S}$ has an irrational rotation number then $g|_{S}$ is uniquely ergodic and the Lyapunov exponent of the unique invariant measure is zero. We claim that $S$ is $2-$normally hyperbolic and then center manifold theory implies $C^2$-regularity of $S$.

To prove the claim on $2$-normal hyperbolicity observe that for any $\epsilon > 0$ there is $C > 0$ such that for all $x \in S, n \in \mathbb{N}$ and $v^c \in T_x S:$
$$\|Dg^n_x(v^c)\| \leq C (1+\epsilon)^n.$$

Indeed, if this is not the case, for some $\epsilon > 0$ and any $n$ one would have $x_n$ such that $\|Dg^n_{x_n}(v^c)\| \geq  (1+\epsilon)^n$. Let $\mu$ be any accumulation point of the sequence of empirical measures $\{\frac{1}{n} \sum_{i=0}^{n-1} \delta_{g^i(x_n)}\}$. Then $\mu$ is a $g|_{S}$-invariant measure whose Lyapunov exponent is larger than zero which is a contradiction.

Considering $g^q$ instead of $g$, define 
$$\mathcal{S}(g):=\{x\in S: AC(x) \text{ is a } g \text{ -invariant  compact manifold} \}.$$

Note that $\mathcal{S}(g)$ is a   non-empty closed set, then  $U=S\backslash \mathcal{S}(g)$ is an open subset of  $S$.
 
 \begin{lemma}
 $\Lambda=AC(x)$ for some $x\in \mathcal{S}(g)$.
 \end{lemma}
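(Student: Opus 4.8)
The plan is to transfer the problem to the one-dimensional transverse picture supplied by the $AI$-reduction of $g$ and to exploit the minimality of $\Lambda$ at each step. First I would record that $\Lambda$ meets $S$: since $\Lambda$ is $u$-saturated, its image $\pi_c(\Lambda)$ in the center quotient $\overline{M}_c\cong\mathbb{T}^2$ is a nonempty closed $\mathcal{W}^u$-saturated set, and minimality of $\mathcal{W}^u$ forces $\pi_c(\Lambda)=\mathbb{T}^2$; hence $\Lambda$ intersects every center leaf, in particular $S$, so $\Lambda\cap S\neq\emptyset$.

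Next I would show that $\Lambda$ consists only of non-open accessibility classes, i.e.\ that $\Lambda$ is a sublamination of the lamination of non-open classes. Let $O\subset\overline{M}$ be the set of points with open accessibility class; by \cite{rodriguez2006accessibility} its complement is the lamination of complete $C^1$ $su$-surfaces, and $O$ is open, $su$-saturated and $g$-invariant. If $\Lambda\cap O\neq\emptyset$, then since $\Lambda$ is compact and proper we have $\Lambda\not\subseteq O$ (otherwise $\Lambda$ would be open and compact, hence all of the connected $\overline{M}$); thus $\Lambda\setminus O$ would be a nonempty proper closed $g$-invariant $su$-saturated subset of $\Lambda$, contradicting minimality. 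Therefore $\Lambda\cap O=\emptyset$ and every $x\in\Lambda$ has a non-open accessibility class.

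Now I would produce a $g$-fixed point inside $\Lambda\cap S$. Having passed to $g^q$, the circle map $g|_S$ has rotation number $0$; the closed $g$-invariant set $\Lambda\cap S$ therefore contains a minimal subset, which for a circle homeomorphism of rational rotation number is a periodic orbit, i.e.\ a fixed point $x_0$ of $g|_S$. Then $AC(x_0)\subseteq\Lambda$ by $su$-saturation, and $g(x_0)=x_0$ makes $AC(x_0)$ a $g$-invariant set, so by minimality $\overline{AC(x_0)}=\Lambda$. It remains to prove that $x_0\in\mathcal{S}(g)$, equivalently that $AC(x_0)$ is compact; granting this, $AC(x_0)$ is a closed $g$-invariant $su$-saturated subset of $\Lambda$, so minimality gives $\Lambda=AC(x_0)$ with $x_0\in\mathcal{S}(g)$, as desired.

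The main difficulty is exactly this last point: ruling out that $AC(x_0)$ is a non-compact leaf, dense in $\Lambda$ (an exceptional minimal lamination). Here I would pass to the universal cover and use the $AI$-structure: lift $S$ to the invariant line $L$, use the retraction $R:\widetilde{X}\to L$ so that each non-open class meets $L$ in a single point of $\Lambda_f$, and set $T:=\widetilde{\Lambda}\cap L\subset\Lambda_f$. Then $T$ is closed and invariant under the group $\Gamma=\langle G,\hat g\rangle$, where $\hat g=R\circ\tilde g|_L$ for a lift $\tilde g$ fixing $L$, and the lift $\tilde x_0$ of $x_0$ satisfies $\hat g(\tilde x_0)=\tilde x_0$; minimality of $\Lambda$ translates into $\Gamma$-minimality of $T$. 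By Lemma \ref{fix and compact leaf}, $AC(x_0)$ is compact if and only if $\tilde x_0\in\mathrm{Fix}(G)$. Since $\tilde g$ conjugates deck transformations to deck transformations, $\hat g$ normalizes $G$ and preserves $\mathrm{Fix}(G)$, and Theorem \ref{hamm} guarantees $\mathrm{Fix}(G)\neq\emptyset$. The crux is to locate a point of $\mathrm{Fix}(G)$ inside the minimal set $T$: for this I would run a Poincar\'e--Denjoy analysis of the $\Gamma$-action on $L$, examining the gaps of $\Lambda_f$ (which $\Gamma$ permutes) together with the $\hat g$-fixed point $\tilde x_0$, to show that the two ends of a non-compact leaf must accumulate on a compact $su$-leaf lying in $\Lambda$. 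Once $T\cap\mathrm{Fix}(G)\neq\emptyset$, the set $T\cap\mathrm{Fix}(G)$ is closed, $G$-invariant (being fixed pointwise by $G$) and $\hat g$-invariant, hence a nonempty closed $\Gamma$-invariant subset of the minimal set $T$; therefore $T=T\cap\mathrm{Fix}(G)\subseteq\mathrm{Fix}(G)$, giving $\tilde x_0\in\mathrm{Fix}(G)$, i.e.\ $x_0\in\mathcal{S}(g)$, which closes the argument.
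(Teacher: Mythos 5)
Your preliminary reductions are fine, and some are cleaner than the paper's: getting $\Lambda\cap S\neq\emptyset$ from minimality of $\mathcal{W}^u$, showing via minimality of $\Lambda$ that $\Lambda$ meets no open accessibility class (the paper obtains non-openness only for one specific point, through a maximality argument), and producing a fixed point $x_0\in\Lambda\cap S$ with $\mathrm{cl}(AC(x_0))=\Lambda$. The problem is that the whole lemma then reduces to showing $AC(x_0)$ is compact, and at exactly this point you do not give a proof: ``run a Poincar\'e--Denjoy analysis of the $\Gamma$-action \dots to show that the two ends of a non-compact leaf must accumulate on a compact $su$-leaf lying in $\Lambda$'' is a restatement of what must be proven, not an argument. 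Theorem \ref{hamm} gives a compact $su$-leaf somewhere, but nothing places one inside $\Lambda$, i.e.\ nothing puts a point of $\mathrm{Fix}(G)$ inside $T$; your minimality trick ($T\cap\mathrm{Fix}(G)$ closed and $\Gamma$-invariant, hence all of $T$) only fires after that nonemptiness is known, and moreover the assertion that minimality of $\Lambda$ translates into $\Gamma$-minimality of $T$ is itself left unjustified. A secondary structural issue: the objects $R$, $\Lambda_f$, $G$ and Lemma \ref{fix and compact leaf} are defined in the paper only for AI-systems, and you never exhibit one --- the universal cover of the compact AB-system $\overline{M}$ is not an AI-system in the paper's sense, since the AI-prototype requires a compact nilmanifold factor.

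The paper closes exactly this gap with an extremality device your outline lacks. Arguing by contradiction, it takes a connected component $J$ of $S\setminus\mathcal{S}(g)$ meeting $\Lambda$; restricts to $AC(J)$, which by Lemma 8.9 of \cite{hammerlindl2017ergodic} \emph{is} an AI-system (this is where the machinery of $R$, $\Lambda_f$, $G$ becomes available); uses the fact that center leaves of an AI-system are properly embedded to conclude that $J_0=\Lambda\cap J$ and its lift $\widetilde{J}_0\subset L$ are compact; and then takes the maximum $x_+=\max J_0$, with lift $t_+=\max\widetilde{J}_0$. Since $G$ acts on $\Lambda_g$ by orientation-preserving homeomorphisms and preserves $\widetilde{J}_0$, it automatically fixes its maximum, so $t_+\in\mathrm{Fix}(G)\cap\mathrm{Fix}(\widetilde{g})$, and Lemma \ref{fix and compact leaf} makes $AC(x_+)$ a compact invariant manifold, i.e.\ $x_+\in\mathcal{S}(g)$, contradicting $x_+\in J$. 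The order-extremal choice is what converts $G$-invariance of a set into $G$-fixing a point; with your arbitrary fixed point $x_0$ this implication genuinely fails, since a non-compact leaf in $\Lambda$ lifts to a point of $\Lambda_g$ that is \emph{not} $G$-fixed. Completing your sketched Poincar\'e--Denjoy analysis would in all likelihood force you to reinvent this maximum-point argument.
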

 \begin{proof}
 Indeed,   if  $\Lambda\neq AC(x)$ for all $x\in \mathcal{S}(g)$ then  $\Lambda\cap U\neq \emptyset$  and there exists a connected component $J$ of $U$ such that  $\Lambda\cap J\neq \emptyset$. Note that $g(J)=J$ because $\rho(g|_{S})=0$.  Since $g(J)=J$  and $\Lambda$ is a minimal compact $u$-saturated and $g$-invariant set,  we have that $\Lambda\subset AC(J)$. Let $J_0=\Lambda\cap J$, define
$$x_{+}=\max J_0.$$
Since $g(J)=J$ and $g(\Lambda)=\Lambda$   we have that $g(J_0)=J_0$ and $x_+\in Fix(g)$. As $AC(x_{+})\subset \Lambda$,
$\Lambda$ is a minimal compact $u$-saturated and $g$-invariant set and $x_+\in Fix(g) \cap \Lambda$, it follows that 
$$\Lambda=cl\big(AC(x_{+})\big).$$
\begin{claim}
$AC(x_{+})$ is not open. Consequently $g|AC(J)$ is non-accessible $AI$-system.
\end{claim}
\begin{proof}
Note that if $AC(x_{+})$ is  open then $AC(x_{+})\cap J$ is open. This implies that there exists $y\in AC(x_{+})\cap J \subset K_{g}$ such that $x_{+}<y$. This implies that $\max J_0>x_0$ which
is a contradiction. Finally,  Lemma 8.9 from \cite{hammerlindl2017ergodic} implies that $g|AC(J)$ is an $AI$-system. As $x_+\in J$ and $AC(x_+)$ is not open, we  conclude  that $g|AC(J)$ is non-accessible.
\end{proof}

Now, as $AC(x_{+})$ is not open and $\Lambda=cl\big(AC(x_{+})\big)$, we have that $J_0\subset \{y\in S: AC(y) \text{ is not open}\}$. From the definition of  $AI$-system, it follows that every center leaf on $AC(J)$ is properly embedded. Therefore, $\Lambda$ intersects each center leaf in a compact set.\\

Let $P:\widetilde{AC(J)} \rightarrow AC(J)$ be the universal cover, the center leaf $L$ of $\widetilde{g}$, $\Lambda_g$ and the group $G$ as in the subsection  \ref{subsection AI-systems}. If $\widetilde{\Lambda}$ and $\widetilde{J}_0$ are the pre-images of $\Lambda$ and $J_0$ by universal cover $P$, respectively, then $\widetilde{\Lambda}$ intersects each center leaf on $\widetilde{AC(J)}$ in a compact set. In particular, $\widetilde{J}_0 = \widetilde{\Lambda}\cap L$ is compact.  Since $J_0\subset \{y\in S: AC(y) \text{ is not open}\}$ we have  that  $\widetilde{J}_{0}\subset \Lambda_{g}$. 
\begin{claim}
If $t_{+}=\max \widetilde{J}_0$, then $t_{+}\in Fix(G)$.
\end{claim}
\begin{proof}
Since $G< Homeo^{+}(\Lambda_g)$, we have that $t_+\leq \beta(t_{+})$ for all $\beta\in G$. Note that $\widetilde{J}_0$ is exactly equal to the orbit $G(\widetilde{J}_0) = \{\beta(t) : \beta \in  G \text{ and } t\in \widetilde{J}_0\}$,i.e. $\widetilde{J}_0=G(\widetilde{J}_0)$. Then $t_+= \beta(t_{+})$ for all $\beta\in G$, this means, $t_+\in Fix(G)$.
\end{proof}
Since $t_{+}=\max \widetilde{J}_0$ and $g(J_0)=J_0$, we have that $t_+\in Fix(\widetilde{g})$ and $P(t_+)=x_+$. Then $t_{+}\in Fix(G)\cap Fix( \widetilde{g})$. By Lemma \ref{fix and compact leaf}, $AC(x_+)\subset AC(J)$ is  an invariant compact manifold, i.e. $x_{+}\in \mathcal{S}(g)$. This contradiction completes the proof of the lemma.
\end{proof}
Since $\Lambda$ is a fixed  compact $su$-torus for $g$ we have that  $\pi(\Lambda)$ is a periodic   $su$- torus for $f$.   This completes the proof of the Theorem.  

\end{proof}

\subsection{Proof of Theorem \ref{Teo1}}
First, we will prove the following Lemma:
\begin{lemma} 
Consider a $C^2$ dynamically coherent partially hyperbolic diffeomorphism $f: \mathbb{T}^3 \rightarrow \mathbb{T}^3$ with compact center leaves. If $f$ is topologically transitive, then  it  have at most two orbits of periodic $su$-torus.
 \end{lemma}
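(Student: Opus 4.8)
The plan is to turn the statement into a combinatorial problem about how the center‑collapsing quotient organizes the $su$-tori, and then let topological transitivity collapse the count. First I would record the standing assumption behind everything that follows: since an $su$-torus is a non-open accessibility class, the presence of even one of them forces $f$ to be non-accessible, so we are in the regime where the quotient $\pi_c:\mathbb{T}^3\to M_c\cong\mathbb{T}^2$ is a (trivial) circle bundle with $f_c$ topologically Anosov. The key geometric observation is that every periodic $su$-torus $T$ is transverse to the center circles: its tangent space is $E^s\oplus E^u$, which is complementary to $E^c$, so $d\pi_c$ restricts to an isomorphism on $TT$ and $\pi_c|_T:T\to\mathbb{T}^2$ is a finite covering. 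Consequently $T$ meets each center leaf in a finite set, and, being distinct accessibility classes, distinct periodic $su$-tori are pairwise disjoint.

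The geometric heart of the argument is to produce a \emph{cyclic order} on the $su$-tori that $f$ respects. I would pass to the fiberwise cover $\widehat{M}=\mathbb{T}^2\times\mathbb{R}\to\mathbb{T}^3$ that unwraps the center circle, with deck group generated by the unit fiber translation $\tau$. Each periodic $su$-torus lifts to a $\tau$-invariant, properly embedded, fiber-transverse hypersurface that is homologous to a multiple of $\mathbb{T}^2\times\{\ast\}$ and hence separates $\widehat{M}$ into an ``above'' and a ``below''. Disjointness of the tori then linearly orders these lifts by height along the $\mathbb{R}$-direction, and projecting back down yields a cyclic order on the finite collection of tori and on the complementary regions they bound. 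A lift $\widehat f$ commutes with $\tau$ and permutes these lifted hypersurfaces monotonically, so $f$ induces a bijection of the cyclically ordered set of tori (and of regions) that \emph{preserves} the cyclic order when $Df|_{E^c}$ preserves the center orientation and \emph{reverses} it otherwise.

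To finish, suppose for contradiction that there are at least three $f$-orbits of periodic $su$-tori, and let $\mathcal{U}$ be the (finite, closed, $f$-invariant) union of tori coming from three distinct orbits, so that $\widehat{M}\setminus\mathcal{U}$ projects to $N\geq 3$ cyclically ordered open regions $R_0,\dots,R_{N-1}$ permuted by $f$. If $f$ had two disjoint nonempty invariant open sets it could not be transitive; since the union of the regions in any single $f$-orbit is open and invariant, transitivity forces the action on $\{R_0,\dots,R_{N-1}\}$ to be a single orbit. Now I would read off the two cases of the induced bijection. In the orientation-preserving case the bijection is a rotation $i\mapsto i+k$ on $\mathbb{Z}/N$, whose transitivity means $\gcd(k,N)=1$; but the \emph{same} rotation acts on the tori $T_i$ via $R_i=(T_i,T_{i+1})\mapsto R_{i+k}$, so the tori also form a single orbit, contradicting that we chose three. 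In the orientation-reversing case the bijection has the form $i\mapsto c-i$, hence is an involution with orbits of size at most two, so transitivity on the regions forces $N\leq 2$, again contradicting $N\geq 3$. Either way we reach a contradiction, and therefore $f$ has at most two orbits of periodic $su$-tori; the bound $2$ (rather than $1$) is precisely what the orientation-reversing case permits, namely two fixed $su$-tori whose two complementary regions are swapped.

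I expect the main obstacle to be the middle step: rigorously establishing that the periodic $su$-tori are ``parallel'', i.e.\ that their fiberwise lifts are genuinely separating and linearly ordered so that $f$ descends to a cyclic-order bijection. This requires controlling the topology of disjoint fiber-transverse (possibly higher-degree multisection) tori in $\mathbb{T}^3$ and verifying that the complementary regions are honestly cyclically ordered with $f$ acting by a single cyclic-order-preserving or reversing permutation; once this structure is in place, the transitivity-plus-counting conclusion is routine.
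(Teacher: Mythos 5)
Your closing counting argument is correct, and it is essentially the same mechanism the paper uses: transitivity forbids two disjoint nonempty open fully invariant sets, so the induced permutation of the complementary regions must be transitive, and a cyclic-order automorphism or anti-automorphism transitive on $N\geq 3$ regions forces a contradiction. The genuine gap is the structural step you flag yourself, and it is worse than a routine verification: the claim that every periodic $su$-torus lifts to the center-unwrapping cover $\mathbb{T}^{2}\times\mathbb{R}$ as a compact separating surface (equivalently, is homologous to a multiple of a horizontal torus) is \emph{false} for systems in $\phc^{2}(\mathbb{T}^{3})$. Take $A\in SL_{2}(\mathbb{Z})$ hyperbolic, $k\in\mathbb{Z}^{2}\setminus\{0\}$, and the affine diffeomorphism $f(x,\theta)=(Ax,\,\theta+k(A-I)x)$ of $\mathbb{T}^{2}\times S^{1}$. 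Its center foliation is the vertical circle fibration, and each wrapped graph $T_{s}=\{(x,\,k\cdot x+s)\}$ is a \emph{fixed} $su$-torus: one checks $W^{s}(x,\theta)=\{(y,\theta+k(y-x)):y\in W^{s}_{A}(x)\}\subset T_{\theta-k\cdot x}$, similarly for $W^{u}$, and that the $u$-saturation of a stable leaf fills the graph. Since $\pi_{1}(T_{s})$ has nonzero image ($\gcd(k_{1},k_{2})\mathbb{Z}$) in the deck group of $\mathbb{T}^{2}\times\mathbb{R}\to\mathbb{T}^{3}$, the preimage of $T_{s}$ is a union of non-compact spirals meeting every fiber infinitely often and separating nothing, so your cyclic order never gets off the ground. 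This example is not transitive, but that is precisely the point: your ``parallelism'' claim cannot be quoted as a general geometric fact about fiber-transverse $su$-tori; if it holds under transitivity, that implication is the real content and is left unproved. A repair is possible --- distinct periodic $su$-tori are disjoint accessibility classes, disjoint embedded tori with nonzero classes in $H_{2}(\mathbb{T}^{3})$ have \emph{proportional} classes (their homological intersection is the cross product of the class vectors), and one can pass to the infinite cyclic cover dual to that common class instead of the fiberwise cover --- but this requires carrying out the homological argument and checking that $f$ lifts.

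For comparison, the paper avoids all global topology of $\mathbb{T}^{3}$ by localizing to a single periodic center leaf $S$, where a cyclic order exists for free because $S$ is a circle: each periodic $su$-torus meets $S$ in a finite set (transversality plus compactness of both manifolds), the trace $K_{S}$ of all such tori is a closed subset of $S$, the connected components $J$ of $S\setminus K_{S}$ play the role of your regions, their accessibility saturations $AC(J)$ are open and invariant under a power of $f$, transitivity places all of them in a single orbit, and hence $K_{S}$ is the orbit of the at most two endpoints of one interval $J$ --- giving at most two orbits of tori with no lifting and no horizontality claim needed. If you wish to keep the global approach, the proportional-classes route above is the way to fill the hole, but it ends up reproving homologically what the paper reads off directly from the circle $S$.
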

 \begin{proof}
Let $S$ be a periodic center leaf, and let $K_{S}\subset S$ be the set of points $x\in S$ such that $AC(x)$ is a periodic $su$-torus. Assume that $K_{S}$ is non-empty. Since $f$ is topologically transitive, we have $K_S \subsetneq S$. Consequently, $S\backslash K_S$ is a non-empty open set. Let $J$ be a connected component of $S\backslash K_S$, and let $x\in \partial(J)$. Choose $n_0\geq 1$ such that $f^{n_0}(S)=S$. Since $S$ and $AC(\mathcal{O}_{f^{n_0}}(x))$ are transversal compact manifolds, their intersection $S \cap AC(\mathcal{O}_{f^{n_0}}(x))$ is a finite subset of $S$. Hence, $x$ is a periodic point of $f^{n_0}|_S$ for every $x$ on the boundary of $J$. Thus, there exists a natural number $\tau$ that is a multiple of $n_0$ such that $f^\tau(J) = J$.

Then,  $f^{\tau}(AC(J))=AC(f^{\tau}(J))=AC(J)$. If $I$ is another  connected component of $S\backslash K_S$,  since $AC(J)=f^{\tau}(AC(J))$ is an open set  and $f$ is transitive we have that there exists $n\in \{1,\ldots,\tau \}$ such that $f^{n}(AC(J))\cap AC(I)\neq \emptyset$, and  we also have that $AC(I)$ and $AC(J)$ has no $su$-periodic torus we have that $f^{n}(\partial (AC(J))=\partial (AC(I))$. So 
$$K_S= \partial (AC(J)) \cup f(\partial (AC(J)))\ldots \cup f^{\tau-1}(\partial (AC(J))).$$
This completes the proof of the lemma.
 \end{proof}
 
\begin{proof}[Proof of the Theorem \ref{Teo1}]  Suppose that $\#(MME_{0})>1$. By Teorem A in  \cite{hertz2012maximizing}, we have that $f$ is not accessible. Therefore, $f$ is an infra-AB-system and  there exists  $n_0\in \mathbb{N}$ (actually $n_0=1$ or $2$) such that  $f^{n_0}$ lifts to an AB-system  $g$ on a finite cover $\overline{M}$. Let $\tilde{S}$ be a fixed center leaf for $g$, since $\mathbb{T}^{3}$ is not minimal $u$-saturated set  we can suppose that  the rotation number  $\rho(g|_{\tilde{S}})$ is rational, otherwise if $\Lambda$ is $u$-saturated minimal for $x\in \Lambda\cap \tilde{S}$, using Denjoy's theorem, we would have that $\mathbb{T}^{3}=\mathcal{F}^{u}(\tilde{S})=cl(\mathcal{F}^{u}(\mathcal{O}(x)))\subseteq \Lambda \neq \mathbb{T}^{3}$, which is absurd. So, the rotation number  $\rho(g|_{\tilde{S}})$ is rational. Moreover, as $f$ has finitely many periodic $su$-torus  we have that $g$ has finitely many periodic $su$-torus. Up to taking another iterate of $f$ if necessary,  we can assume that all periodic $su$-torus of $g$ are fixed. Let 
  $$K_{\tilde{S}}=\{x \in \tilde{S}: A C(x) \subset \mathbb{T}^{3} \text { is a fixed $su$-torus}\}.$$
 Suppose $J$  is a connected component of  $\tilde{S} \backslash K_{\tilde{S}}$,  by  Lemma 8.9 from \cite{hammerlindl2017ergodic}  this implies that    $\left.f\right|_{A C(J)}$ is an AI-system. Since $f$ is topologically  transitive and $g$  is a  lift of $f^{n_0}$  on a finite cover we have that there exist points $\overline{x}_{1},\ldots, \overline{x}_l\in \overline{M}$ such that 
 $$cl\Big(\mathcal{O}^{+}(\overline{x}_1,g) \cup \ldots \cup \mathcal{O}^{+}(\overline{x}_l,g)\Big)=\overline{M}, $$
 where $\mathcal{O}^{+}(z,g):=\{g^{n}(z):n=0,1,\ldots\}$.  Since $g(AC(J))=AC(J)$ we can suppose that $\{\overline{x}_{1},\ldots, \overline{x}_l\}\cap AC(J)=\{\overline{x}_{1},\ldots, \overline{x}_{\kappa}\}$ for some $\kappa \leq l$. So
 \begin{eqnarray}\label{denso}
 cl\Big(\mathcal{O}^{+}(\overline{x}_1,g) \cup \ldots \cup \mathcal{O}^{+}(\overline{x}_\kappa,g)\Big)=cl\big(AC(J)).
 \end{eqnarray}

 \begin{theorem}[Hammerlindl, Theorem 7.1]\label{teohamm}
  Suppose $g: \hat{M} \rightarrow \hat{M}$ is an AI-system with non invariant compact us-leaves. Then, either
\begin{itemize}
    \item[(1)] $g$ is accessible,
    \item[(2)] there is an open set $V \subset \hat{M}$ such that
$$
\text{cl}(g(V)) \subset V, \quad \bigcup_{k \in \mathbb{Z}} g^{k}(V)=\hat{M}, \quad \bigcap_{k \in \mathbb{Z}} g^{k}(V)=\varnothing,
$$
and the boundary of $V$ is a compact us-leaf, or
\item[(3)] there are non compact us-leaves in $\hat{M}$, uncountable many non-compact us-leaves in $\hat{M}$ and there is $\lambda \neq 1$ such that $g$ is semiconjugate to
$$
\mathbb{T}^2 \times \mathbb{R} \rightarrow \mathbb{T}^2 \times \mathbb{R}, \quad(v, t) \mapsto(A v, \lambda t)
$$
\end{itemize}
 \end{theorem}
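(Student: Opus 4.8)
The plan is to reduce the trichotomy to the one-dimensional dynamics of the lift $\tilde g$ on its invariant center leaf $L\cong\mathbb{R}$, and then to classify according to how $\tilde g$ and the group $G$ act on the closed set $\Lambda_g$. Writing $\hat M$ for the manifold $X$ of Subsection \ref{subsection AI-systems}, I would fix the leaf conjugacy of $g$ to an AI-prototype and the associated objects: the universal cover $P:\tilde X\to \hat M$, the $\tilde g$-invariant leaf $L$, the retraction $R:\tilde X\to L$, the closed set $\Lambda_g=\{t\in L: AC(t)\text{ not open}\}$, and the group $G<\text{Homeo}^+(\Lambda_g)$. Since deck transformations are $\tilde g$-equivariant, $\tilde g$ restricts to an orientation-preserving homeomorphism of $L$ and normalizes $G$, so it induces a homeomorphism $\bar g$ of the quotient $Q:=\Lambda_g/G$. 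By Lemma \ref{fix and compact leaf}, a point $t\in\Lambda_g$ yields a compact us-leaf in $\hat M$ exactly when $t\in\text{Fix}(G)$, and such a leaf is $g$-invariant exactly when in addition $\bar g$ fixes its class; thus the hypothesis that $\hat M$ has no invariant compact us-leaf says precisely that $\bar g$ fixes no class coming from $\text{Fix}(G)$.

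\textbf{Disposing of the accessible case and splitting.} If $\Lambda_g=\emptyset$, then every accessibility class is open, so $AC(x)=\hat M$ for all $x$ and $g$ is accessible, which is alternative (1). I would then assume $\Lambda_g\neq\emptyset$, hence $Q\neq\emptyset$, and study the orientation-preserving homeomorphism $\bar g$ of the one-dimensional ordered space $Q$, which by the previous step has no fixed point arising from a compact ($G$-fixed) class. The decisive split is whether the minimal behaviour of $\bar g$ is carried by an isolated, drifting family of compact us-leaves, or is spread over a us-saturated invariant set with uncountable transverse structure.

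\textbf{The two remaining alternatives.} In the first regime, choose a compact us-leaf, i.e. a $G$-fixed $t_*\in\Lambda_g$; by the hypothesis $\bar g(t_*)\neq t_*$, say $\bar g$ pushes its class strictly in the positive direction. Saturating the region on one side of $AC(t_*)$ by $W^u$ and $W^s$ via the global product structure produces an open set $V\subset\hat M$ with boundary $\partial V=P(AC(t_*))$ a compact us-leaf. The strict drift of $\bar g$ gives $\text{cl}(g(V))\subset V$, and, there being no invariant compact us-leaf to obstruct the drift, the forward iterates shrink to nothing while the backward iterates exhaust $\hat M$, so $\bigcup_{k}g^k(V)=\hat M$ and $\bigcap_{k}g^k(V)=\emptyset$; this is alternative (2). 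In the remaining regime no such trapping leaf exists, and then, after collapsing each non-open class to a point, $\tilde g|_L$ is conjugate to a linear map $t\mapsto\lambda t$ with $\lambda\neq1$; using the Anosov factor $A$ on $\mathbb{T}^2$ together with the retraction $R$, one extends this along the $W^u/W^s$ directions to a semiconjugacy $\hat M\to\mathbb{T}^2\times\mathbb{R}$, $(v,t)\mapsto(Av,\lambda t)$, and the absence of trapping forces the non-open classes to accumulate, producing uncountably many non-compact us-leaves, which is alternative (3).

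\textbf{Main obstacle.} I expect the crux to be the last regime. One must build a monotone, $\tilde g$- and $G$-equivariant map $L\to\mathbb{R}$ intertwining $\tilde g|_L$ with $t\mapsto\lambda t$, extend it across $\tilde X$ along the unstable and stable directions using $R$ and the global product structure, and verify that it descends to the stated semiconjugacy on $\hat M$, all while controlling the possibly non-Hausdorff leaf space of non-open accessibility classes. The genuinely delicate point is the quantitative separation of alternatives (2) and (3): showing that the absence of an invariant, drift-trapping compact leaf forces the uniform expansion or contraction $\lambda\neq1$ of $\tilde g|_L$. This is where the normal-hyperbolicity estimates on the center dynamics (of the same flavour as the $2$-normal hyperbolicity argument used earlier in the excerpt) must be invoked, and it is the step I would expect to demand the most care.
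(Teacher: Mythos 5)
This statement is not proved in the paper at all: it is quoted, with attribution, as Theorem 7.1 of Hammerlindl \cite{hammerlindl2017ergodic}, and the authors use it as a black box inside the proof of Theorem \ref{Teo1}. So there is no internal proof to compare your proposal against; what can be assessed is whether your sketch would amount to a proof of Hammerlindl's theorem, and it would not.

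Your skeleton is reasonable: reducing to the action of $\tilde g$ and the group $G$ on $\Lambda_g \subset L$ (the objects of Subsection \ref{subsection AI-systems}), using Lemma \ref{fix and compact leaf} to translate ``compact $us$-leaf'' into ``point of $\operatorname{Fix}(G)$'', and splitting according to whether $\operatorname{Fix}(G)$ is nonempty. Case (1) is fine, and your case (2) can be completed along the lines you indicate: a monotone limit of the drifting iterates of a $G$-fixed point would be simultaneously $G$-fixed and $\bar g$-fixed, hence an invariant compact $us$-leaf, contradicting the hypothesis, so the drift escapes and the iterates of $V$ exhaust $\hat M$. The genuine gap is case (3), which you yourself flag as the main obstacle but then attack with the wrong tool. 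The scaling factor $\lambda \neq 1$ does not come from normal-hyperbolicity or center Lyapunov exponent estimates (the argument of that flavour in this paper serves an entirely different purpose, namely justifying the use of Denjoy's theorem on a $C^2$ center circle); it is algebraic. When $\operatorname{Fix}(G)=\emptyset$, one studies the group generated by $G$ and $\tilde g|_{\Lambda_g}$ inside $\operatorname{Homeo}^{+}(\Lambda_g)$: a H\"older-type argument for fixed-point-free actions on ordered sets yields translation numbers, i.e.\ an order-preserving semiconjugacy of $\Lambda_g$ into $(\mathbb{R},+)$, and the equivariance of $G$ under conjugation by $\tilde g$ --- deck transformations are permuted according to the action of the hyperbolic automorphism $A$ on the fundamental group --- forces the induced map on translation numbers to be multiplication by an eigenvalue $\lambda$ of that action. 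It is the hyperbolicity of $A$, not any estimate on $Dg|_{E^c}$, that guarantees $\lambda \neq 1$. Without this identification you have no construction of the semiconjugacy $(v,t)\mapsto(Av,\lambda t)$, no way to exclude $\lambda=1$, and no source for the uncountably many non-compact $us$-leaves (which arise from the uncountably many $G$-orbits, respectively gaps, of the image of $\Lambda_g$ in $\mathbb{R}$). So the proposal outlines alternatives (1) and (2) acceptably, but the heart of the trichotomy remains unproved and, as diagnosed, would be attacked by a method that cannot produce the conclusion.
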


\begin{claim}
$g|_{AC(J)}$ is accessible.
\end{claim}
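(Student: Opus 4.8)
The plan is to feed the AI-system $g|_{AC(J)}$ into Hammerlindl's trichotomy (Theorem \ref{teohamm}) and to eliminate the two non-accessible alternatives using the transitivity recorded in equation~(\ref{denso}). First I would verify the hypotheses of Theorem \ref{teohamm}. The system $g|_{AC(J)}$ is an AI-system by Lemma 8.9 of \cite{hammerlindl2017ergodic}, and it has \emph{no} invariant compact $us$-leaf: indeed $J$ was chosen inside $\tilde{S}\setminus K_{\tilde{S}}$ and, after passing to the iterate for which every periodic $su$-torus of $g$ is fixed, an invariant compact $us$-leaf meeting $AC(J)$ would be an invariant $su$-torus through a point of $J$, forcing that point into $K_{\tilde{S}}$, a contradiction. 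Hence exactly one of the alternatives (1)--(3) of Theorem \ref{teohamm} holds, and since the forward $g$-orbits of $\overline{x}_1,\dots,\overline{x}_\kappa$ are dense in $\mathrm{cl}(AC(J))$ by~(\ref{denso}), it remains only to discard (2) and (3).

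To rule out alternative (2), suppose there is an open set $V\subset AC(J)$ with $\mathrm{cl}(g(V))\subset V$, $\bigcup_{k\in\mathbb{Z}}g^{k}(V)=AC(J)$ and $\bigcap_{k\in\mathbb{Z}}g^{k}(V)=\varnothing$. From $\mathrm{cl}(g(V))\subset V$ one gets $g(V)\subsetneq V$ (equality would contradict $\bigcap_k g^k(V)=\varnothing$), so the sets $g^{-k}(V)$ strictly increase with union $AC(J)$; consequently each $\overline{x}_i$ lies in some $g^{-N_i}(V)$, i.e. $g^{N_i}(\overline{x}_i)\in V$, and since $g(V)\subset V$ the whole forward tail $\{g^{n}(\overline{x}_i):n\geq N_i\}$ stays inside $V$. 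Thus $\mathrm{cl}\big(\bigcup_i\mathcal{O}^{+}(\overline{x}_i,g)\big)$ meets the open set $AC(J)\setminus \mathrm{cl}(V)$ in at most the finitely many points $g^{n}(\overline{x}_i)$ with $n<N_i$. This open set is nonempty, for otherwise $\mathrm{cl}(V)$ would be open and closed in the connected set $AC(J)$, giving $\mathrm{cl}(V)=AC(J)$ against $\bigcap_k g^k(V)=\varnothing$. Hence the forward orbits cannot be dense, contradicting~(\ref{denso}).

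To rule out alternative (3), suppose $g|_{AC(J)}$ is semiconjugate, via a continuous surjection $\pi\colon AC(J)\to\mathbb{T}^2\times\mathbb{R}$, to $F(v,t)=(Av,\lambda t)$ with $\lambda\neq 1$. Then $\pi\big(\mathcal{O}^{+}(\overline{x}_i,g)\big)$ is the forward $F$-orbit of $\pi(\overline{x}_i)=(v_i,t_i)$, whose $\mathbb{R}$-coordinates form the sequence $\{\lambda^{n}t_i\}_{n\geq 0}$. For any $\lambda\neq 1$ each such sequence is a discrete subset of $\mathbb{R}$ (escaping to infinity if $|\lambda|>1$, accumulating only at $0$ if $|\lambda|<1$, or finite if $\lambda=-1$), so the union over the finitely many indices $i$ is nowhere dense in $\mathbb{R}$. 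By surjectivity of $\pi$ and density of the orbits in $AC(J)$, the set $\bigcup_i\pi(\mathcal{O}^{+}(\overline{x}_i,g))$ would have to be dense in $\mathbb{T}^2\times\mathbb{R}$, and in particular its $\mathbb{R}$-coordinates dense in $\mathbb{R}$, which is impossible. This discards (3), so alternative (1) holds: $g|_{AC(J)}$ is accessible.

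The step I expect to be the main obstacle is the careful bookkeeping in alternative (2): one must argue that finitely many \emph{forward} orbits really cannot accumulate on the ``repelling'' side $AC(J)\setminus\mathrm{cl}(V)$, which hinges on the monotonicity $g(V)\subsetneq V$ and on the nonemptiness of that open set (hence on the connectedness of $AC(J)$ and on $\bigcap_k g^k(V)=\varnothing$). A secondary point to handle cleanly is the surjectivity of the semiconjugacy in (3), which is what transfers density of the orbits downstairs to density of the scaled $\mathbb{R}$-coordinates; everything else is a direct consequence of $\lambda\neq 1$.
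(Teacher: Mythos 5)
Your proposal is correct and follows essentially the same route as the paper: invoke Hammerlindl's trichotomy (Theorem \ref{teohamm}) for the AI-system $g|_{AC(J)}$, then use the density of the finitely many forward orbits from~(\ref{denso}) to kill case (2) by the trapping argument ($g(V)\subset V$ forces the forward tails into $V$, so the orbits cannot accumulate on the nonempty open set $AC(J)\setminus \mathrm{cl}(V)$) and case (3) by pushing the orbits through the semiconjugacy and observing their $\mathbb{R}$-coordinates $\{\lambda^{n}t_i\}$ are never dense. Your write-up is in fact slightly more careful than the paper's on two minor points: you verify the ``no invariant compact $su$-leaf'' hypothesis explicitly, and you handle all $\lambda\neq 1$ (including negative $\lambda$) rather than only $\lambda<1$ and $\lambda>1$.
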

\begin{proof}
Note that  $g|_{AC(J)}$ is an $AI$-system with no invariant compact $su-$ leaves. Then $g|_{AC(J)}$ 
satisfies either case $(1)$,  $(2)$ or case $(3)$  of the theorem above.\\

If  $g|_{AC(J)}$ is non-accessible. Then it  satisfies either case   $(2)$ or case $(3)$. First  suppose that $g|_{AC(J)}$ satisfies the case $(2)$ and let $V\subset AC(J)$ as in  Theorem \ref{teohamm}. Since $\bigcup_{k \in \mathbb{Z}} g^{k}(V)=AC(J)$ and (\ref{denso}),  there is $k_0\in \mathbb{N}$ such that $g^{k_0}(V)\cap \mathcal{O}^{+}(\overline{x}_i,g)\neq \emptyset$ for every $i=1,\ldots, \kappa$.  Therefore $$cl\big(\bigcup_{n\in \mathbb{N}} g^{k}(V)\big)=cl\big(AC(J)\big).$$

Since $cl(g(V)) \subset V$ we have that $g^{n}(V) \subset g(V)$ for every $n\in \mathbb{N}$.
Therefore 
$$cl\big(AC(J)\big)=cl\big(\bigcup_{n\in \mathbb{N}} g^{k}(V)\big)=cl\big(g(V)\big) \subset V \neq AC(J).$$
This contradiction  implies that $g|_{AC(J)}$  does not satisfy the case $(2)$.\\

Now, if $g|_{AC(J)}$ satisfies the case $(3)$. Then, $g|_{AC(J)}$ is  semi-conjugate  to  the map  $A_{\lambda}: \mathbb{T}^{2} \times \mathbb{R} \rightarrow \mathbb{T}^{2} \times \mathbb{R}, \quad(v, t) \mapsto(A v, \lambda t)$ with $\lambda\ne 1$ by  a semi-conjugacy $h:AC(J)\rightarrow \mathbb{T}^{2}\times \mathbb{R}$. Let $z_i=h(\overline{x}_i)$ for $i=1,\ldots,\kappa$, since $h$ is surjective, by (\ref{denso})   we have that
\begin{eqnarray}\label{denso2}
cl\Big(\mathcal{O}^{+}(z_1,A_{\lambda}) \cup \ldots \cup \mathcal{O}^{+}(z_{\kappa},A_{\lambda})\Big)=\mathbb{T}^{2}\times \mathbb{R}.
\end{eqnarray}
On the other hand, since $\mathbb{T}^{2}\times \{0\}$ is invariant  we can assume  that $\{z_1,\ldots,z_{\kappa}\}\subset \mathbb{T}^{2}\times  [(-b,-a)\cup (a,b)]$. Then
\begin{eqnarray*}
\mathcal{O}^{+}(z_1,A_{\lambda}) \cup \ldots \cup \mathcal{O}^{+}(z_\ell,A_{\lambda})&\subset &\mathbb{T}^{2}\times  \big(-b,b\big), \; \text{when } \lambda<1, \text{ and}\\
\mathcal{O}^{+}(z_1,A_{\lambda}) \cup \ldots \cup \mathcal{O}^{+}(z_\ell,A_{\lambda})&\subset& \mathbb{T}^{2}\times  \big((-\infty,-a)\cup (a,\infty)\big), \; \text{when } \lambda>1. 
\end{eqnarray*}
This contradicts  (\ref{denso}). Therefore $g|_{AC(J)}$ does not satisfy the case $(3)$. This completes the proof of the claim.
\end{proof}

Let $T_{su}(f):=\{x\in M: AC(x) \text{ is a periodic torus }\}$.
As $g|AC(J)$ is accessible and $\partial(AC(J))$ projects on  periodic $su$-torus in $\mathbb{T}^{3}$ for all connected component $J$ of $\tilde{S}\backslash K_{\tilde{S}}$  we have that $f$ is accessible  in each connected component of $\mathbb{T}^{3}\backslash T_{su}(f)$.   Since the complement of the union of periodic $su$-torus has finite many connected components, the topological  transitivity implies that there exists $\tau\in \mathbb{N}$ such that 
\begin{eqnarray}\label{clasdeacc}
\quad\mathbb{T}^{3}\backslash T_{su}(f) =AC(x)\cup\ldots \cup f^{\tau}(AC(x))  \;\;\; \text{ for all }\; x\in \mathbb{T}^{3}\backslash T_{su}(f).
\end{eqnarray}

\begin{lemma}
If $f$ has a periodic $su$-torus  and $\mu$ is an ergodic measure of maximal entropy with $\lambda_{c}(\mu)=0$ then $\supp(\mu)$ is the orbit of a periodic $su$-torus.
\end{lemma}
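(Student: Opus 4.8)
The plan is to reduce the statement to ruling out the possibility $\supp(\mu)=\mathbb{T}^3$, and then to exploit the draining structure of the center dynamics. First I would record the structural dichotomy. Since $\lambda_c(\mu)=0$, Corollary \ref{saturado} gives that $\supp(\mu)$ is a closed, $f$-invariant, $s,u$-saturated set, hence a union of accessibility classes. Now invoke \eqref{clasdeacc}: the complement $\mathbb{T}^3\setminus T_{su}(f)$ is a single finite $f$-orbit of open accessibility classes. Therefore, if $\supp(\mu)$ met $\mathbb{T}^3\setminus T_{su}(f)$, it would contain one entire open class $AC(x)$, hence its whole $f$-orbit, hence (using \eqref{clasdeacc}, the density of the complement of the tori, and closedness of $\supp(\mu)$) all of $\mathbb{T}^3$. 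This yields the clean alternative: either $\supp(\mu)\subseteq T_{su}(f)$ or $\supp(\mu)=\mathbb{T}^3$.

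The first alternative is easy. If $\supp(\mu)\subseteq T_{su}(f)$, then since $T_{su}(f)$ is a finite union of orbits of periodic $su$-tori (by the previous lemma) and $\mu$ is ergodic, $\supp(\mu)$ lies in a single orbit $\mathcal{O}(\mathbb{T}^2_{su})$. Because $\pi_c$ conjugates $f^q|_{\mathbb{T}^2_{su}}$ to the Anosov map $f_c^q$, the torus carries a unique, fully supported measure of maximal entropy whose entropy equals $h_{\topo}(f)$; hence $\mu$ restricted to the orbit is that measure and $\supp(\mu)=\mathcal{O}(\mathbb{T}^2_{su})$, which is exactly the claim.

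The heart of the proof is therefore excluding $\supp(\mu)=\mathbb{T}^3$. I would pass to the finite cover on which $f^{n_0}$ lifts to the AB-system $g$, and take the iterate for which every periodic $su$-torus is fixed and $\rho(g|_{\tilde S})=0$. The fixed $su$-tori cut the cover into $g$-invariant slabs; on each slab the center foliation is transverse to the two bounding tori, and since the center rotation number is $0$ and there is no periodic $su$-torus in the interior, the center return map pushes interior points monotonically from one bounding torus toward the other. Introducing a continuous height coordinate along the center arcs that strictly increases under this return map, Poincaré recurrence applied to the lifted normalized invariant measure forces zero mass on the interior of every slab. Thus the lifted measure, and hence $\mu$, must concentrate on $T_{su}(f)$, contradicting full support; combined with the $s,u$-saturation of $\supp(\mu)$ this is the desired contradiction.

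The main obstacle is the strict monotonicity used in the last step: a priori the center return map could possess interior fixed center leaves (in the extreme, be trivial on a whole slab), in which case recurrence gives no information and $\mu$ could spread over an open accessibility class. I expect to close this gap using the accessibility of $g|_{AC(J)}$ established earlier together with Theorem \ref{teohamm}: an accessible AI-system admits a unique measure of maximal entropy, whose support is the closure of its single accessibility class, so a neutral slab would yield a maximal measure with support larger than a torus yet not all of $\mathbb{T}^3$, contradicting \eqref{clasdeacc}. Alternatively, when the torus carries a hyperbolic maximal measure $\eta_0$ (nonzero central exponent), the earlier disjointness proposition, applied to $\eta_0$ and $\mu$ and passing to $f^{-1}$ to fix the sign, forces $\eta_0(\supp(\eta_0)\cap\supp(\mu))=0$, impossible if $\supp(\mu)=\mathbb{T}^3$. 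Reconciling the draining and neutral alternatives — i.e. showing the neutral case cannot coexist with transitivity and the bound $\#\mme_0(f)\le 2$ — is the delicate point I would need to argue carefully.
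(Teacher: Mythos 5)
Your reduction to the dichotomy ($\supp(\mu)\subseteq T_{su}(f)$ or $\supp(\mu)=\mathbb{T}^3$) via Corollary \ref{saturado} and (\ref{clasdeacc}), and your treatment of the first alternative, agree with the paper. The gap lies in excluding $\supp(\mu)=\mathbb{T}^3$, and it sits exactly where you say it does --- but your proposed repairs do not close it. The ``draining'' mechanism is not merely delicate, it is vacuous in the relevant situation: an $f$-invariant measure of full support is incompatible with a center return map pushing every interior point monotonically toward the bounding tori, so the case you must actually handle is the neutral one, where Poincar\'e recurrence gives nothing. Of your two fallbacks, the first is unsupported and circular: Theorem \ref{teohamm} is a purely topological trichotomy and says nothing about measures of maximal entropy; uniqueness of the MME for an accessible AI-system is nowhere established (AI-systems live on non-compact manifolds, so Theorem \ref{dichotomy} does not apply to them); and the claimed contradiction with (\ref{clasdeacc}) evaporates because the situation being ruled out is precisely $\supp(\mu)=\mathbb{T}^3$, which is perfectly consistent with (\ref{clasdeacc}). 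The second fallback (the disjointness proposition applied to the torus measure $\eta_0$ and $\mu$, passing to $f^{-1}$ to fix signs) is sound, but it requires $\lambda_c(\eta_0)\neq 0$: it covers only transversely hyperbolic tori. So your proof has a genuine hole exactly when the unique measure of maximal entropy on the periodic $su$-torus also has vanishing center exponent.

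The paper closes that case with a tool absent from your proposal: the invariance principle (Theorem D of \cite{avila2015absolute}, cf.\ \cite{tahzibi2019invariance}). Since $\lambda_c(\mu)=0$, the disintegration $\{\mu^c_x\}$ of $\mu$ along the center foliation is defined on \emph{every} leaf, continuous, and invariant under $su$-holonomy and under the dynamics. Two cases then arise. If $\mu^c_a$ has an atom on $S\cap\mathbb{T}_{su}$, the $su$-invariance of the disintegration makes every point of $\mathbb{T}_{su}$ an atom of the center conditionals, so $\mu$ is carried by the orbit of the torus, contradicting full support. If there are no such atoms, one forms the tube $V_\epsilon(\mathbb{T}_{su})=\bigcup_{x\in\mathbb{T}_{su}}B^{\mu}_c(x,\epsilon)$, where $B^{\mu}_c(x,\epsilon)$ is the center arc around $x$ whose conditional $\mu^c$-measure on each side of $x$ equals $\epsilon$; invariance of the disintegration gives $f^{\tau}\bigl(V_\epsilon(\mathbb{T}_{su})\bigr)=V_\epsilon(\mathbb{T}_{su})$, and for small $\epsilon$ this proper open invariant set contradicts topological transitivity. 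This argument is uniform in the transverse behaviour of the torus and needs no hyperbolicity, no draining, and no classification of the slab dynamics; it is the step you would have to import (or reprove) to make your proposal complete.
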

\begin{proof}
Suppose that $\supp(\mu)$ contains a point $x$ such that $AC(x)$ is not a periodic $su$-torus,  then $AC(x)$ is open, by (\ref{clasdeacc}) we have that $cl(AC(x)\cup\ldots\cup f^{\tau}(AC(x)))= \mathbb{T}^{3}$. Then, $\supp(\mu)=\mathbb{T}^{3}$.  We show that this  yields a contradiction.
 Without loss of generality, we assume that  $S=\mathcal{F}^{c}(a)$ is a fixed center  leaf, for some $a\in \mathbb{T}^{3}$.  Let $\{\mu^{c}_{x}\}_{x}$ be the conditionals measures along the center foliation. Then, using the fact that the disintegration is defined for every center leaf (Invariance principle) and it is unique, we have that $\mu^{c}_a$ is an invariant measure of $f|_S$.  Since $f$ has a periodic $su$-torus then $\rho(f|_{S})\in \mathbb{Q}$.  As $\mu$ is fully supported on $\mathbb{T}^{3}$ and the center disintegration is continuous we conclude that $\supp(\mu_{a}^{c}) = S$.  We have that $f|_S$ has rational rotation number and 
$$S=\supp(\mu_{a}^{c})\subset Per(f|_S).$$
Then, $Per(f|_{S})=S$. 

Let $\mathbb{T}_{su} \in T_{su}(f)$ be a periodic torus. If $\mu^c_a$ has an atom in $S \cap \mathbb{T}_{su} $, then by the $su-$invariance of center disintegration we conclude that all the points on $\mathbb{T}_{su}$ are atoms for the center conditional measures and it is easy to see that $\mu$ should be supported on the orbit of such periodic torus which contradicts $\supp(\mu)=\mathbb{T}^{3}.$ 

If this is not the case, then for any small $\epsilon > 0 $ let define $V_{\epsilon}(\mathbb{T}_{su}):= \bigcup_{x\in \mathbb{T}_{su}}B_{c}^{\mu}(x,\epsilon)$
where $B_c^{\mu} (x, \epsilon)$ is a ball around $x$ in the center leaf through $x$ and the conditional measure $\mu^c_x$ of the arc from $x$ to each boundary point of the arc is $\epsilon.$

%Moreover, as $\mu^{c}_{a}$ is fully supported  and it has no atoms  then $\mu^{c}_a$ is equivalent to Lebesgue measure in the circle $S$ and, by continuity of the disintegration of $\mu$ along center foliation ( Theorem \ref{arturviana} ) we have that $\mu^{c}_x$ is equivalent to Lebesgue measure in $\mathcal{F}^{c}(x)$. Let $\epsilon>0$, define $B_{c}^{\mu}(x,\epsilon)=\{y\in \mathcal{F}^{c}(x): \mu((x,y)) \text{ o } \mu((y,x))<\epsilon\}$, since $\mu^{c}_{x}$ is equivalent to Lebesgue measure then $B_{c}^{\mu}(x,\epsilon)\subset \mathcal{F}^{c}(x)$ is an open set. Therefore $V_{\epsilon}(\mathbb{T}_{su})= \bigcup_{x\in \mathbb{T}_{su}}B_{c}^{\mu}(x,\epsilon)$
%is an  open set in $M$. 

\begin{figure}[H]
\centering
 \includegraphics[scale=0.23]{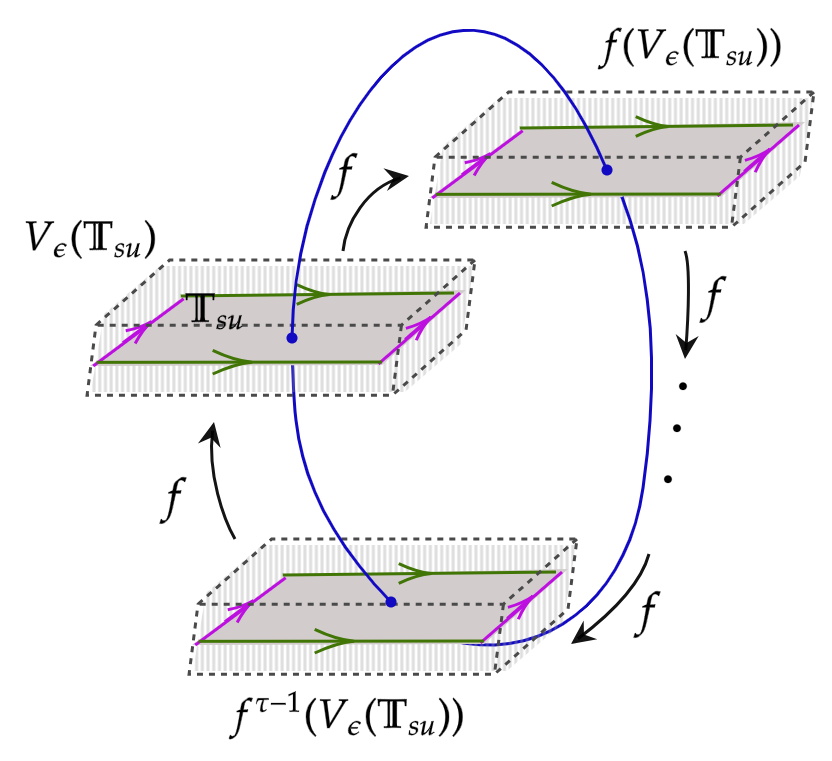}
\caption{Dynamic on $V_{\epsilon}(\mathbb{T}_{su})$ }
\end{figure}
The invariance principle (Theorem D in \cite{avila2015absolute})  implies that $\mu^{c}_{f^{k}(x)}(f^{k}(B_{c}^{\mu}(x,\epsilon))) =B_{c}^{\mu}(f^{k}(x),\epsilon)$. So,  if $\tau$ is the period of $\mathbb{T}_{su}$ we have that 
\begin{eqnarray}\label{epspequeno}
f^{\tau}(V_{\epsilon}(\mathbb{T}_{su}))= V_{\epsilon}(\mathbb{T}_{su}).
\end{eqnarray}
Considering $\epsilon>0$  small,  we have that (\ref{epspequeno})  contradicts the topological transitivity of $f$.
\end{proof}

Finally, since there exist at most two orbits of periodic  $su$- torus and every periodic $su$-torus supports a  unique ergodic measure of maximal entropy, the Lemma implies  that  $\mme_{0}(f)\leq 2$. This completes the proof of Theorem \ref{Teo1}.
\end{proof}

\subsection{Proof of Theorem \ref{teo2} }

\begin{proof}[Proof of Theorem \ref{teo2}]
Let $\mathbb{T}^{2}_{su}$ be the transversely hyperbolic $su$-torus, and $\eta$ be the unique measure of maximal entropy for $f|_{\mathcal{O}(\mathbb{T}^{2}_{su})}$. Without loss of generality, let's assume that $$\int_{\mathbb{T}_{\mathrm{su}}^2} \log \left\|D f \mid E^c\right\| d \eta < 0.$$ Since $f$ is non-accessible, by Theorem \ref{is-infra-AB}, there exists $n_0 \in \mathbb{N}$ such that $f^{n_0}$ lifts to an AB-system $g:\overline{M}\rightarrow \overline{M}$ with a finite covering map $\pi:\overline{M}\rightarrow \mathbb{T}^{3}$. Note also that if $\widehat{\nu}$ is the maximal entropy measure for $g$, then the projection $\nu=\pi_{*} \widehat{\nu}$ is a maximal entropy measure for $f^{n_0}$. Thus, $$\mu=\dfrac{1}{n_0}\sum_{i=0}^{n_0-1} f_{*}^{i}{\nu}$$ is a maximal entropy measure for $f$.

Let $S$ be a fixed central leaf for $g$. The existence of an $su$-periodic torus implies that the rotation number $\rho(g|_S)$ is rational. Without loss of generality, we can assume that every periodic point is fixed, and let $x_1\in Fix(g|S)$ such that $AC(x_1)$ is projected onto $\mathbb{T}_{su}^{2}$. Note that $AC(x_1)$ is also a hyperbolic $su$-torus, denoted by $\overline{\mathbb{T}^{2}}_{su}$. Take $J=(x_1,x_2)\subset S$ such that $AC(x_2)$ is projected onto $\mathbb{T}_{su}^{2}$ and the projection of $AC(z)$ does not intersect $\mathcal{O}(\mathbb{T}_{su})$ for all $z\in (x_1,x_2)$. Therefore, by Lemma 8.9 in \cite{hammerlindl2017ergodic}, $g|AC(J)$ is an $AI$-system.

Let $\widehat{\eta}_1$ be the maximal entropy measure for $g|_{\overline{\mathbb{T}^{2}}_{su}}$. Then, $\lambda_{c}(\widehat{\eta}_1)<0$. Since $\overline{\mathbb{T}^{2}}_{su}$ projects onto $\mathbb{T}^{2}_{su}$. Let's take a maximal entropy measure $\widehat{\nu}$ with $\lambda_{c}(\widehat{\nu})\leq 0$, such that $\supp(\widehat{\nu})$ is the minimal $u$-saturated set most ``close'' to $\overline{\mathbb{T}^{2}}_{su}$. Since $\lambda_{c}(\widehat{\eta}_{1})<0$, there exists $\widehat{\eta}_{2}$, the twin measure of $\widehat{\eta}_{1}$ as in Proposition \ref{Twin measure}, taking the positive orientation of the central leaf. If $\lambda_c(\widehat{\eta}_{2})=0$, then we take $\widehat{\nu}=\widehat{\eta}_{2}$. If $\lambda_c(\widehat{\eta}_{2})>0$, again by Proposition \ref{Twin measure}, taking the positive orientation of the central leaf, there exists a maximal entropy measure $\widehat{\eta}_{3}$ with $\lambda_{c}(\widehat{\eta}_{3})\leq 0$. Take $\widehat{\nu}=\widehat{\eta}_{3}$.

Identifying each central leaf of $AC(J)$ with $(0,1)$, we can consider $AC(J)\equiv \mathbb{T}^{2}\times(0,1)$. Let $\Lambda=\supp(\widehat{\nu})\subset \mathbb{T}^{2}\times(0,1]$. Note that $\Lambda\cap I_z\neq \emptyset$ for every $z\in AC(J)$. For each $(z, t) \in AC(J)=\mathbb{T}^{2} \times (0,1)$, let $t_z^{-}=\min ( I_z\cap \Lambda)$. Define the sets
\begin{eqnarray*}
V=\left\{(z, t) \in AC(J): t<t_z^{-} \right\} \text{ and }
V^{+}=\left\{(z, t) \in AC(J): t\geq t_{z}^{-}\right\}. 
\end{eqnarray*}
\begin{figure}[H]
\centering
 \includegraphics[scale=0.23]{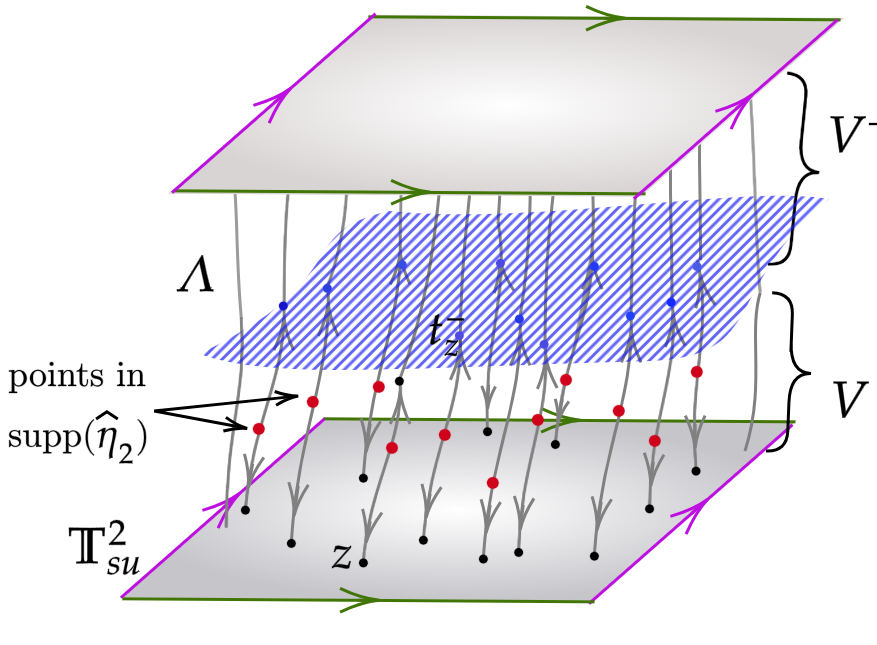}
\caption{support of $\widehat{\eta}_3$}
\end{figure}
Since $g$ preserves the orientation of the central fibers, we have that $V$ and $V^{+}$ are invariant subsets. Moreover, as $\Lambda$ is compact, there exists $\delta>0$ such that $I_{z} \cap V$ is an arc of length greater than $\delta$, i.e., $\delta<t^{-}_z\leq 1$ for every $z\in \mathbb{T}^{2}$.

Let's show that $V^{+}$ is a closed subset of $AC(J)$. Indeed, let $\left((z_ {n}, t_{n})_{n\in \mathbb{N}}\right)\subset  V^{+}$ be a sequence of points such that $\left(z_ {n}, t_{n}\right) \rightarrow(z, a)\in AC(J)$. Since $\delta\leq t^{-}_{z_n}\leq t_{n}$, then $(z,a)\in \mathbb{T}^{2}\times[\delta,1]$. Also, as $\Lambda$ is compact, taking a subsequence, we can assume that $(z_n,t^{-}_{z_n})\rightarrow (z,\omega^{-})\in \Lambda$. Thus, $$t^{-}_z\leq \omega^{-}\leq a.$$ Therefore, $(z,a)\in V^{+}$, i.e., $V^{+}$ is closed in $AC(J)$. Hence, we have that $V$ is an open $g$-invariant set. As $V$ is $g$-invariant, there exists $k_0\in \mathbb{N}$ such that $f^{k_0}(\pi(V))=\pi(V)$. Now, define
\begin{eqnarray*}
U&=&\bigcup_{i=0}^{k_0}f^{i}(\pi(V)).
\end{eqnarray*}
Since $U$ is an open $f$-invariant set, then $\overline{U}=M$. Moreover, note that $\partial(U)\subset (\mathcal{O}(\overline{\mathbb{T}^{2}}_{su})\cup K)$, with $K=\mathcal{O}(\pi(\Lambda))$. By the definition of $V$, if $\widehat{\mu}$ is a maximal entropy measure with $\supp(\widehat{\mu})\cap V\neq \emptyset$, then $\widehat{\mu}=\widehat{\nu}$. As the maximal entropy measures of $g$ are projected surjectively onto the maximal entropy measures of $f^{n_0}$. It is clear that if $\mu'\in \mme(f^{n_0})$ with $\supp(\mu')\cap \pi(V)\neq \emptyset$, then $\mu'=\pi_{*}\widehat{\nu}$. This implies that if $\mu\in \mme(f)$ with $\supp(\mu)\cap U\neq \emptyset$, then
$$\mu=\dfrac{1}{n_0}\sum_{i=0}^{n_0-1} f_{*}^{i}{\nu}, \;\; \pi_{*}(\widehat{\nu})=\nu.$$ If $\eta_1= \pi_{*}(\widehat{\eta}_{1})$ and $\eta_3= \pi_{*}(\widehat{\eta}_{3})$, then the only possible maximal entropy measures for $f$ are $\mu_1=\dfrac{1}{n_0}\sum_{i=0}^{n_0-1} f_{*}^{i}{\eta_{1}}$, $\mu_2=\dfrac{1}{n_0}\sum_{i=0}^{n_0-1} f_{*}^{i}{\nu}$, and $\mu_3=\dfrac{1}{n_0}\sum_{i=0}^{n_0-1} f_{*}^{i}{\eta_{3}}$ (note that $\mu_1=\eta$ and it can also happen that $\mu_1=\mu_3$). This concludes the proof of the theorem.
\end{proof}

\begin{remark}
In \cite{rocha2022number}, examples of $f\in \phc$ were constructed within the context of Theorem \ref{Teo1}, featuring exactly three measures of maximal entropy, all of them are hyperbolic. Two of these measures have support in different transversally hyperbolic $su$-tori.  When the unstable foliation is minimal, it is proved that either $f$ has a unique measure of maximum entropy (with a zero central Lyapunov exponent), or $f$ has exactly two hyperbolic measures with central Lyapunov exponents of opposite signs; however, the $su$-torus do not exist in this scenario.
\end{remark}

%%%%

% Bibliografía.
%-----------------------------------------------------------------


\begin{thebibliography}{99}
%\bibitem{Avila-Viana} A. Avila, M. Viana, \emph{Extremal Lyapunov exponents: an Invariance Principle and
%applications}, Invent. Math. 181 (2010), 115-178.
\bibitem{avila2015absolute} A. Avila, M. Viana and A. Wilkinson, \emph{Absolute continuity, Lyapunov exponents and rigidity I: geodesic flows}, J. Eur. Math. Soc. \textbf{17} (2015), no. 6, 1435--1462.

\bibitem{buzzi2012maximal} J. Buzzi, T. Fisher, M. Sambarino and C. V\'asquez, \emph{Maximal entropy measures for certain partially hyperbolic, derived from Anosov systems}, Ergodic Theory Dynam. Systems \textbf{32} (2012), no. 1, 63--79.

\bibitem{Ali} J. Buzzi, T. Fisher and A. Tahzibi, \emph{A dichotomy for measures of maximal entropy near time-one maps of transitive Anosov flows}, Ann. Sci. \'Ec. Norm. Sup\'er. \textbf{55} (2022), no. 4, 969--1002.

\bibitem{Vaughn2020} V. Climenhaga, Y. Pesin and A. Zelerowicz, \emph{Equilibrium measures for some partially hyperbolic systems}, J. Mod. Dyn. \textbf{16} (2020), 155--205.

\bibitem{diaz2019structure} L.J. D\'iaz, K. Gelfert, T. Marcarini and M. Rams, \emph{The structure of the space of ergodic measures of transitive partially hyperbolic sets}, Monatsh. Math. \textbf{190} (2019), 441--479.

\bibitem{rocha2022number} J.E. Rocha and A. Tahzibi, \emph{On the number of ergodic measures of maximal entropy for partially hyperbolic diffeomorphisms with compact center leaves}, Math. Z. \textbf{301} (2022), no. 1, 471--484.

\bibitem{hammerlindl2017ergodic} A. Hammerlindl, \emph{Ergodic components of partially hyperbolic systems}, Comment. Math. Helv. \textbf{92} (2017), no. 1, 131--184.

\bibitem{hertz2012maximizing} F. Rodriguez Hertz, M.A. Rodriguez Hertz, A. Tahzibi and R. Ures, \emph{Maximizing measures for partially hyperbolic systems with compact center leaves}, Ergodic Theory Dynam. Systems \textbf{32} (2012), no. 2, 825--839.

\bibitem{newhouse1989continuity} S.E. Newhouse, \emph{Continuity properties of entropy}, Ann. Math. \textbf{129} (1989), no. 1, 215--235.

\bibitem{rodriguez2006accessibility} F. Rodriguez-Hertz, M. Rodriguez-Hertz and R. Ures, \emph{Accessibility and stable ergodicity for partially hyperbolic diffeomorphisms with 1D-center bundle}, Invent. Math. \textbf{172} (2008), no. 2, 353--381.

\bibitem{rohlin1949fundamental} V.A. Rohlin, \emph{On the fundamental ideas of measure theory}, Mat. Sb. (N.S.) \textbf{25} (1949), no. 67, 107--150.

\bibitem{tahzibi2019invariance} A. Tahzibi and J. Yang, \emph{Invariance principle and rigidity of high entropy measures}, Trans. Amer. Math. Soc. \textbf{371} (2019), no. 2, 1231--1251.

\bibitem{ures2021maximal} R. Ures, M. Viana and J. Yang, \emph{Maximal entropy measures of diffeomorphisms of circle fiber bundles}, J. Lond. Math. Soc. \textbf{103} (2021), no. 3, 1016--1034.

\bibitem{ures2012intrinsic} R. Ures, \emph{Intrinsic ergodicity of partially hyperbolic diffeomorphisms with hyperbolic linear part}, Proc. Amer. Math. Soc. 140 (2012), no. 6, 1973-1985.
\bibitem{yang2021entropy} J. Yang, \emph{Entropy along expanding foliations}, Adv. Math. \textbf{389} (2021), 107893.

\end{thebibliography}
\end{document}